\numberwithin{equation}{section}
\newtheorem{theorem}{\textit{Theorem}}[section]
\newtheorem{proposition}[theorem]{\textit{Proposition}}
\newtheorem{definition}[theorem]{\textit{Definition}}
\newtheorem{lemma}[theorem]{\textit{Lemma}}
\newtheorem{corollary}[theorem]{\textit{Corollary}}
\newtheorem{remark}[theorem]{\textit{Remark}}
\newtheorem{theoremm}{\textit{Theorem}}
\newtheorem*{theoremmA}{\textit{Theorem~A}}
\newtheorem{corollaryy}[theoremm]{\textit{Corollary}}
\newtheorem{fact}{\textbf{Fact}}
\newcommand*{\QEDA}{\hfill\ensuremath{\blacksquare}}
\DeclareMathOperator{\Int}{Int}
\DeclareMathOperator{\cco}{\overline{conv}}
\title[The topology of the polar involution]{A topological insight into the polar involution of convex sets}
\author{Luisa F. Higueras-Monta{\~n}o and Natalia Jonard-P{\'e}rez}
\address{Departamento de  Matem\'aticas,
Facultad de Ciencias, Universidad Nacional Aut\'onoma de M\'exico, 04510 Ciudad de M\'exico, M\'exico.}
\email{(L.\,Higueras-Monta\~no) fher@cimat.mx}
\email{(N.\,Jonard-P\'erez) nat@ciencias.unam.mx}
\keywords{Duality of convex sets, Polar set, Involution, Hilbert cube, Hyperspace, Anderson's problem}
\subjclass[2020]{Primary: 52A20, 52A21, 54B20, 54C10, 54C55. Secondary: 54H15, 57S25}
\thanks{
The first author has been supported by The Post-Doctoral Scholarship Program at UNAM. The second author have been supported by CONACyT grant 252849 (M\'exico) and by PAPIIT grant IN101622  (UNAM, M\'exico).}
\begin{document}


\textit{This paper has been accepted for publication in Israel Journal of \mbox{Mathematics}}

\vspace{0.7cm}

{\let\newpage\relax\maketitle}

\begin{abstract}
  Denote by $\mathcal{K}_0^n$ the family of all closed convex sets $A\subset\mathbb{R}^n$ containing the origin $0\in\mathbb R^n$. For $A\in\mathcal{K}_0^n,$ its polar set is denoted by $A^\circ.$ In this paper, we investigate the topological nature of the polar mapping $A\to A^\circ$ on $(\mathcal{K}_0^n, d_{AW})$, where $d_{AW}$ denotes the Attouch-Wets metric. We prove that  $(\mathcal{K}_0^n, d_{AW})$ is homeomorphic to the Hilbert cube $Q=\prod_{i=1}^{\infty}[-1,1]$ and the polar mapping is topologically conjugate with the standard based-free involution $\sigma:Q\rightarrow Q,$ defined by $\sigma(x)=-x$ for all $x\in Q.$ 
We also prove that among the inclusion-reversing involutions on $\mathcal K^n_0$ (also called dualities), those and only those with a unique fixed point are topologically conjugate with the polar mapping, and they can be characterized as all the maps $f:\mathcal{K}_0^n\to \mathcal{K}_0^n$ of the form $f(A)=T(A^{\circ})$, with $T$  a positive-definite linear isomorphism of $\mathbb R^n$.  
\end{abstract}




\section{Introduction}
\label{sec:intro}

Let $\mathbb{R}^n,$ $n\geq2,$ be  the $n$-dimensional Euclidean space endowed with the standard scalar product $\langle\cdot,\cdot\rangle$, and denote by $\mathcal{K}_0^n$  the class of closed convex sets $A\subset\mathbb{R}^n$ such that $0\in A$. The \textit{polar set} of a given element $A\in\mathcal{K}_0^n$ is defined as 
\begin{equation}
\label{eq:polarsetdefn}
A^\circ:=\left\{x\in\mathbb{R}^n:\sup_{a\in A}\langle a,x\rangle\leq1\right\},
\end{equation} 
and it is always an element of  $\mathcal{K}_0^n$. A classic result related with the polar set establishes  that $\big(A^\circ\big)^\circ=A$ for all $A\in\mathcal{K}_0^n$ (see the Bipolar Theorem in Section~\ref{sec:prelim}). 
This property leads us to the well-known concept of involution.
An \textit{involution}  on a topological space $X$ is a continuous map $\beta:X\rightarrow X$ such that  $\beta(\beta(x))=x$ for every $x\in X$. If additionally $\beta$ has a unique fixed point (namely, if there exists a unique point $x_0\in X$ such that $\beta(x_0)=x_0$), then $\beta$ is called \textit{based-free} (c.f. Definition~\ref{def:base free action}).

Let $X$ and $Y$ be topological spaces with involutions $\beta$ and $\tau$, respectively. A map $f:X\to Y$ is called \textit{equivariant}  (with respect to $\beta$ and $\tau$), if $f(\beta (x))=\tau (f(x))$ for every $x\in X$. 
If we can find an equivariant homeomorphism $f:X\to Y$, then we say that $\beta$ and $\tau$ are \textit{conjugate}.

An interesting and difficult problem in infinite-dimensional topology,  is the characterization of all based-free involutions on the Hilbert cube $Q=\prod_{i=1}^{\infty}[-1,1]$. In the mid-sixties, R. D. Anderson  asked if all based-free involutions on $Q$ are conjugate (see e.g.\cite[Problem 930]{West1990} or  \cite[Section 3.2]{ANTONYAN2021}). In other words, if we let $\sigma:Q\to Q$  be the standard involution on $Q$ given by $\sigma(x)=-x$, $x\in Q,$ Anderson's problem can be stated as:

\begin{center}\textit{If $\beta:Q\rightarrow Q$ is an involution with a unique fixed point,  does there exist a homeomorphism $\Psi:Q\rightarrow Q$ such that $\beta=\Psi\sigma\Psi^{-1}$? }
\end{center}
(see \cite[p 554]{West1990}).  Despite the many efforts that have been done to answer this question (see for instance \cite{Antonyan1999,vanMillWest2020, vanMillWest2022,West1976,west2018,WestWong79,Wong1974}), Anderson's problem remains open. 

An important tool in 
this and other topics on infinite-dimensional topology has been the use of different models for the space $Q$.  In this respect, the so called hyperspaces of compact sets have played an important role (see, e.g., \cite{CurtisSchori74}, \cite{CurtisSchori78} and  \cite{west2018}). In particular, in the seminal work \cite{nadler1979} the authors proved that the family of all compact convex subsets of a convex set $K\subset\mathbb R^n$ is homemorphic to the Hilbert cube, provided that $K$ is compact and has dimension at least 2.

In this work, we are interested in  the topological properties of the polar mapping and its relationship with the so called Anderson's problem. In this sense, the 
set $\mathcal{K}_0^n$ can be considered as a metric space with respect to the  Attouch-Wets metric $d_{AW}$ (see Section \ref{sec:prelim} for the definition). In this case, the polar mapping 
\begin{align*}
\alpha:(\mathcal{K}_0^n,d_{AW})&\rightarrow(\mathcal{K}_0^n,d_{AW})\\
A&\to A^\circ
\end{align*}  
is in fact a based-free  involution for which the Euclidean ball $\mathbb{B}$ is the unique fixed point (see e.g. Proposition \ref{prop:polar-continuous}). Additionally, as we will prove in Section~\ref{sec:K0n Hilbert cube}, the space $(\mathcal{K}_0^n,d_{AW})$ is homeomorphic to the Hilbert cube $Q.$ 
One of our main contributions is the following  theorem.

\begin{theoremm}\label{Theo:main 1}
The polar mapping $\alpha:(\mathcal{K}_0^n,d_{AW})\rightarrow(\mathcal{K}_0^n,d_{AW})$ is topologically conjugate with the standard involution $\sigma:Q\rightarrow Q$.
\end{theoremm}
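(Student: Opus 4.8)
The goal is to produce an equivariant homeomorphism $\Psi\colon(\mathcal{K}_0^n,\alpha)\to(Q,\sigma)$, i.e.\ a homeomorphism with $\Psi\circ\alpha=\sigma\circ\Psi$. Since Anderson's problem is open, the proof cannot simply invoke conjugacy of arbitrary based-free involutions; instead the plan is to exploit the special geometry of $\alpha$ near its fixed point. We already know that $(\mathcal{K}_0^n,d_{AW})\cong Q$ and, by Proposition~\ref{prop:polar-continuous}, that $\alpha$ is a based-free involution whose unique fixed point is the Euclidean ball $\mathbb{B}$, so $\alpha$ acts \emph{freely} on $\mathcal{K}_0^n\setminus\{\mathbb{B}\}$. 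The strategy is to reduce the problem to the \emph{link} of $\mathbb{B}$: first realise $(\mathcal{K}_0^n,\alpha)$ as an equivariant cone over an $\alpha$-invariant copy $L\cong Q$ of the Hilbert cube with cone point $\mathbb{B}$, and then identify the free involution $\alpha|_L$ with the standard free involution occurring as the link of $0$ in $(Q,\sigma)$.

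The homotopy-theoretic half of this program is essentially automatic. Because $Q$ is its own cone, removing the single point $\mathbb{B}$ from $\mathcal{K}_0^n\cong Q$ leaves a contractible space on which $\alpha$ acts freely; the same holds for $0\in(Q,\sigma)$. For free involutions on the Hilbert cube the orbit space is a $Q$-manifold, and the involution is determined up to conjugacy by the (proper) homotopy type of this orbit space together with its double-cover data. Here both orbit spaces are aspherical with fundamental group $\mathbb{Z}_2$ --- that is, $K(\mathbb{Z}_2,1)$'s homotopy equivalent to $\mathbb{RP}^{\infty}$ --- so the two free involutions are conjugate. Hence the link $(L,\alpha|_L)$ is equivariantly homeomorphic to the standard link, and the free part of the action carries no extra information.

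The crux is the geometric half: constructing the $\alpha$-equivariant cone (radial) structure at $\mathbb{B}$. Concretely, I would build an $\alpha$-invariant ``radius'' $\rho\colon\mathcal{K}_0^n\to[0,\infty)$ with $\rho^{-1}(0)=\{\mathbb{B}\}$ and $\rho(A^\circ)=\rho(A)$ --- obtained by symmetrising a natural deviation-from-$\mathbb{B}$ functional, using that polarity interchanges ``inner'' and ``outer'' size data so that a symmetric combination is automatically polarity-invariant --- together with an equivariant deformation of $\mathcal{K}_0^n$ onto $\mathbb{B}$ commuting with $\alpha$. This would yield an equivariant homeomorphism $\mathcal{K}_0^n\setminus\{\mathbb{B}\}\cong L\times(0,\infty)$ in which $\alpha$ acts as $(\alpha|_L)\times\mathrm{id}$, and splicing in the cone point exhibits $(\mathcal{K}_0^n,\alpha)$ as the equivariant cone on $(L,\alpha|_L)$. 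Combined with the previous paragraph, this cone is equivariantly homeomorphic to the cone on the standard link, namely $(Q,\sigma)$, giving the desired conjugacy. A natural candidate for the contracting flow is a convex interpolation toward $\mathbb{B}$ (for nondegenerate bodies, a logarithmic interpolation of support functions); however, such formulas do not in general commute with $A\mapsto A^\circ$, so the flow must be chosen or symmetrised to be manifestly polar-compatible and then recognised via equivariant $Z$-set and equivariant Hilbert-cube techniques rather than read off a closed form.

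I expect this last step to be the main obstacle, and for a structural reason: establishing a \emph{tame} (standard) radial structure at the fixed point is exactly the phenomenon that keeps Anderson's problem open for general involutions, and here it is what must be supplied by convexity. The technical heart is to handle the full class $\mathcal{K}_0^n$ uniformly: it contains unbounded sets, sets with $0\in\partial A$, and lower-dimensional sets, for which support and gauge functions degenerate (taking the values $0$ or $\infty$). Hence both the invariant radius $\rho$ and the contracting deformation must be defined intrinsically through the Attouch--Wets metric $d_{AW}$ and shown to be continuous and polarity-compatible across these degenerate configurations --- the place where the convex geometry of $\mathcal{K}_0^n$ does the real work of singling out $\alpha$ among all based-free involutions of $Q$.
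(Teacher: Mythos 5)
The fatal gap is in what you call the ``essentially automatic'' homotopy-theoretic half. You assert that a free involution on a $Q$-manifold is determined up to conjugacy by the (proper) homotopy type of its orbit space together with covering data, and you then verify only the \emph{ordinary} homotopy type: both orbit spaces are $K(\mathbb{Z}_2,1)$'s. No such classification theorem exists in this form, and the parenthetical ``(proper)'' is precisely what you never check. Worse, your argument uses nothing specific to the polar map: for \emph{any} based-free involution $\beta$ on $Q$ with fixed point $p$, the space $Q\setminus\{p\}$ is a contractible $Q$-manifold on which $\beta$ acts freely, so its orbit space is likewise a $K(\mathbb{Z}_2,1)$ $Q$-manifold; your reasoning would therefore show that the free part of \emph{every} based-free involution is conjugate to the free part of $\sigma$. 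But an equivariant homeomorphism $Q\setminus\{p\}\to Q\setminus\{0\}$ is proper, hence extends over the one-point compactifications, which are $Q$ itself, and the extension is automatically equivariant and carries $p$ to $0$. Thus conjugacy of free parts already implies conjugacy of the involutions, so your ``automatic'' step alone would answer Anderson's problem affirmatively --- which is open. The missing content is exactly the structure of the orbit space at the end corresponding to the fixed point (its proper/infinite simple homotopy type, equivalently the local ANR behavior of the full orbit space at the image of the fixed point), and $K(\mathbb{Z}_2,1)$-ness says nothing about it. Note also that this extension remark makes your second, geometric half redundant if the first half were valid --- a sign that the division of labor is off.

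The geometric half is, in any case, only a plan: the invariant radius $\rho$, the equivariant deformation, and the equivariant splitting $\mathcal{K}_0^n\setminus\{\mathbb{B}\}\cong L\times(0,\infty)$ are never constructed, and you yourself note that the candidate formulas fail to commute with $A\mapsto A^\circ$. The paper proves the theorem by a different and much lighter reduction that never touches the free part: by the West--Wong criterion (Theorem~\ref{thm:WestWongThm1}), the involution is standard if and only if the \emph{full} orbit space $\mathcal{K}_0^n/\alpha$ is an $\mathrm{AR}$; by theorems of Antonyan, since $\mathcal{K}_0^n$ is a compact $\mathrm{AR}$ and the action is based-free, this holds provided $\mathcal{K}_0^n$ is $\mathbb{Z}_2$-contractible. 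Equivariant contractibility is a homotopy statement, far weaker than an equivariant cone structure, and it is exactly what convexity supplies (Theorem~\ref{thm:K0-Z2-contractible}): the Milman--Rotem geometric mean satisfies $g(A^\circ,K^\circ)=g(A,K)^\circ$ and $g(A,A^\circ)=\mathbb{B}$, so after regularizing via $\psi_t(A)=(A+t\mathbb{B})\cap\frac{1-t}{t}\mathbb{B}$ into bodies where $g$ is defined, the homotopy $\varphi_t(A)=g\bigl(\psi_t(A),\psi_t(A^\circ)^\circ\bigr)$ is an equivariant contraction of $\mathcal{K}_0^n$ to $\mathbb{B}$. Your instinct that convexity must supply the tameness at the fixed point is correct, but the formalization that suffices is $\mathbb{Z}_2$-contractibility, not a radial cone structure.
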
 

It is worth mentioning that the homeomorphism $\Psi:Q\to\mathcal{K}_0^n$ obtained after Theorem~\ref{Theo:main 1}, does not map the natural order on $Q$ (see Section~\ref{sec:remarks}) into the order inclusion on $\mathcal K_0^n$ (Remark~\ref{rem: order}).

In \cite{ArtsteinMilman2007,ArtsteinMilman2008,SchneiderBoroczky2008,Slomka2011}, a deep study on the geometric properties of the polar mapping 
has been carried out . It is known that this map is a duality (in the sense of \cite[Definition 4]{ArtsteinMilman2007}) for the class $\mathcal{K}_0^n$, and it can be characterized by some of its basic properties.
For example, in \cite[Corollary 4]{Slomka2011}  (see also \cite[Theorem 10]{ArtsteinMilman2008}),
the following is proved:
\begin{theoremmA}
\label{thm:Slomka2011-corolario}
\cite[Corollary 4]{Slomka2011}
Let $n\geq2$. Let $f:\mathcal{K}_0^n\rightarrow\mathcal{K}_0^n$ be a mapping satisfying for all $A,K\in\mathcal{K}_0^n$ that
\begin{itemize}
    \item[(D1)] $f(f(A))=A$, 
\item[(D2)] If $A\subseteq K$, then $f(A)\supseteq f(K)$.
\end{itemize}
Then, there exists a symmetric linear isomorphism $T:\mathbb{R}^n\rightarrow\mathbb{R}^n$ such that
$f(A)=T(A^\circ)$ for all $A\in\mathcal{K}_0^n$.
\end{theoremmA}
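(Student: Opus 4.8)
The plan is to reduce the statement to a rigidity result about order isomorphisms of the inclusion poset $(\mathcal K_0^n,\subseteq)$ and then to read off both the linear map and its symmetry. First I would record that (D1) and (D2) say precisely that $f$ is an inclusion-reversing bijection: (D2) is monotonicity in the reverse direction, while (D1) exhibits $f$ as its own inverse, hence a bijection. Since the polar mapping $\alpha(A)=A^{\circ}$ is, by the Bipolar Theorem and the elementary properties recalled in Section~\ref{sec:prelim}, also an inclusion-reversing bijection of $\mathcal K_0^n$, the composite $g:=\alpha\circ f$ is an inclusion-\emph{preserving} bijection, i.e.\ an automorphism of the lattice $(\mathcal K_0^n,\subseteq)$. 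If I can show that every such automorphism has the form $g(A)=S(A)$ for some linear isomorphism $S$ of $\mathbb R^n$, then, using $\alpha\circ\alpha=\mathrm{id}$, I recover $f=\alpha\circ g$; that is, $f(A)=\bigl(S(A)\bigr)^{\circ}=(S^{\ast})^{-1}(A^{\circ})$, so $f(A)=T(A^{\circ})$ with $T:=(S^{\ast})^{-1}$ a linear isomorphism.

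The heart of the argument, and the step I expect to be the main obstacle, is the classification of order automorphisms of $\mathcal K_0^n$ as elements of $GL(n,\mathbb R)$. The strategy is to exploit the lattice operations that the order determines: the meet of $A,K$ is $A\cap K$ and the join is $\cco(A\cup K)$, and a poset automorphism preserves both (indeed it preserves arbitrary infima and suprema, since $\mathcal K_0^n$ is a complete lattice). One first checks that $g$ fixes the least element $\{0\}$ and the greatest element $\mathbb R^n$. The key structural fact is that the join-irreducible elements of $\mathcal K_0^n$ are exactly the segments $[0,x]$ and the rays $\{tx:t\ge 0\}$ — precisely the elements lying on a single line through the origin — and $g$ must carry join-irreducibles to join-irreducibles. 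Transporting $g$ through the correspondence $x\leftrightarrow[0,x]$ yields a bijection $h$ of $\mathbb R^n$ with $h(0)=0$; since relations such as ``$[0,z]\subseteq\cco\bigl([0,x]\cup[0,y]\bigr)$'' are order-theoretic and encode ``$z\in\mathrm{conv}\{0,x,y\}$,'' the map $h$ preserves the affine (betweenness) structure of $\mathbb R^n$. By the Fundamental Theorem of Affine Geometry — together with the fact that the only field automorphism of $\mathbb R$ is the identity — $h$ is linear, $h=S\in GL(n,\mathbb R)$. Because every $A\in\mathcal K_0^n$ equals $\cco$ of the segments $[0,x]$ it contains, and $g$ preserves suprema, the equality $g(A)=S(A)$ propagates from segments to all of $\mathcal K_0^n$.

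Finally I would extract the symmetry of $T$ from the involution property (D1). Writing $f(A)=T(A^{\circ})$ and using the identity $(TB)^{\circ}=(T^{\ast})^{-1}(B^{\circ})$ together with the Bipolar Theorem $(A^{\circ})^{\circ}=A$, one computes
\[
f(f(A))=T\bigl((T(A^{\circ}))^{\circ}\bigr)=T\,(T^{\ast})^{-1}(A).
\]
Thus (D1) asserts that the linear isomorphism $T(T^{\ast})^{-1}$ induces the identity on $\mathcal K_0^n$; since a linear map fixing every segment $[0,x]$ must fix every $x$, we obtain $T(T^{\ast})^{-1}=\mathrm{id}$, i.e.\ $T=T^{\ast}$, so $T$ is symmetric, as claimed.

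The technical difficulties are concentrated in the middle paragraph: isolating the join-irreducible elements, distinguishing order-theoretically the bounded segments from the unbounded rays, recovering the affine incidence data from the order alone, and handling the degenerate strata of $\mathcal K_0^n$ (lower-dimensional sets, unbounded sets, and sets with $0$ on the boundary versus in the interior) so that the Fundamental Theorem of Affine Geometry applies cleanly. By contrast, the reduction via $\alpha$ and the derivation of the symmetry of $T$ are essentially formal.
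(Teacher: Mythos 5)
The paper does not actually prove Theorem~A: it is quoted from \cite[Corollary 4]{Slomka2011}, so your proposal can only be compared with the proof in the cited literature, whose architecture it essentially reproduces. Your outer steps are correct and, as you say, formal: by (D1) and (D2), $f$ is an inclusion-reversing bijection (note $f(K)\subseteq f(A)\Rightarrow A=f(f(A))\subseteq f(f(K))=K$, so the reversal holds in both directions), hence $g:=\alpha\circ f$ is an order isomorphism of $(\mathcal{K}_0^n,\subseteq)$; granting $g=\widetilde{S}$ for some $S\in GL(n)$, the identity $(SA)^{\circ}=(S^{\top})^{-1}(A^{\circ})$ gives $f(A)=T(A^{\circ})$ with $T=(S^{\top})^{-1}$, and then (D1) forces $T(T^{\top})^{-1}=\mathrm{id}$, i.e.\ $T$ symmetric. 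Note also that the rigidity statement you set out to prove in your middle paragraph is precisely the bijective case of Slomka's endomorphism theorem, which this paper records as Fact~\ref{fact:teoremas-Slomka2011-SchneiderBoroczky2008} and exploits through an entirely analogous reduction in the proof of Proposition~\ref{prop:injective-polarpreservingmap}; since an order isomorphism of a complete lattice preserves $\cap$ and $\vee$ and yours is visibly non-constant, invoking that Fact would make your argument complete and short.

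As a self-contained proof, however, the middle paragraph has a genuine gap at the affine-rigidity step. What the relations ``$[0,z]\subseteq\cco([0,x]\cup[0,y])$'' give you is that the point map $h$ preserves membership $z\in\mathrm{conv}\{0,x,y\}$; this is strictly weaker than betweenness, since $z\in\mathrm{conv}\{0,x,y\}$ carries no information about whether $z$ lies on the segment $[x,y]$. So ``$h$ preserves the affine (betweenness) structure'' does not follow as stated, and the Fundamental Theorem of Affine Geometry cannot yet be invoked. The gap is fillable, but it needs ideas you have not supplied: for instance, first show that $g$ acts pointwise as $h$ (since $g(A)$ is convex and contains $0$, one has $y\in g(A)$ iff $[0,y]\subseteq g(A)$ iff $[0,h^{-1}(y)]\subseteq A$ iff $y\in h(A)$), so $h$ maps $\mathrm{conv}\{0,x,y\}$ onto $\mathrm{conv}\{0,h(x),h(y)\}$; then recover the far edge by the splitting relation $z\in[x,y]$ iff $\mathrm{conv}\{0,x,y\}=\mathrm{conv}\{0,x,z\}\cup\mathrm{conv}\{0,z,y\}$ (for $0,x,y$ affinely independent), which a point bijection carrying such triangles to such triangles respects; this yields collinearity preservation, and only then does FTAG apply. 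Likewise, the segment-versus-ray issue you flag but do not resolve has an easy order-theoretic answer: rays are exactly the maximal join-irreducibles (no ray is properly contained in a segment or in another ray, whereas $[0,x]\subsetneq[0,2x]$). Without these supplements, your proposal names the hard points --- which are exactly the mathematical content of the cited theorem --- rather than proving them.
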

An earlier version of this result was proved in \cite{SchneiderBoroczky2008} for the class $\mathcal{K}_{(0),b}^n$ of convex compact sets containing $0$ in their interior. 

We will notice in Section~\ref{sec:other standard involutions}  that all the maps $f:\mathcal{K}_0^n\rightarrow\mathcal{K}_0^n$ satisfying conditions (D1) and (D2)  
are always continuous on $(\mathcal{K}_0^n,d_{AW})$ (see Remark~\ref{rem:continuity-decreasing-inv}).
For short, these kinds of maps will be called \textit{decreasing involutions}. 
A key observation
is that Theorem~A
translates the whole study of decreasing involutions into the linear category. In this setting,
another of our results will be the characterization of the linear isomorphisms on $\mathbb{R}^n$  that induce  based-free involutions.
More precisely, these mappings can be characterized as follows.

\begin{theoremm}
\label{thm:caracterizacion-decreasing}
Let $T:\mathbb{R}^n\rightarrow\mathbb{R}^n$ be a symmetric linear isomorphism and let $f:\mathcal{K}_0^n\rightarrow\mathcal{K}_0^n$ be defined as $f(A)=T(A^\circ)$. Then, the following statements hold.
\begin{enumerate}
    \item  If $T$ is positive-definite, then $f$ is conjugate with the polar mapping. In particular, $f$ is based-free.
    \item If $T$ is not positive-definite, then $f$ has infinitely many fixed points in $\mathcal{K}_{(0),b}^n$.
\end{enumerate}
\end{theoremm}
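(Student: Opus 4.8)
The plan is to treat the two statements separately, exploiting the algebraic structure of the symmetric isomorphism $T$ and the fact that $T(A^\circ)$ interacts predictably with linear images of polar sets.

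For part (1), I would first recall the standard identity relating polars and linear images: for any invertible linear $S:\mathbb R^n\to\mathbb R^n$ one has $(S A)^\circ = (S^*)^{-1}(A^\circ)$, where $S^*$ is the adjoint. Since $T$ is symmetric and positive-definite, it has a positive-definite symmetric square root $T^{1/2}$. The strategy is to show $f$ is conjugate to the polar map $\alpha$ via the homeomorphism $\Phi:\mathcal K_0^n\to\mathcal K_0^n$ given by $\Phi(A)=T^{1/2}(A)$ (a homeomorphism of $(\mathcal K_0^n,d_{AW})$ because linear isomorphisms act homeomorphically on this space). I would then verify the conjugacy equation $\Phi\circ\alpha=f\circ\Phi$ by a direct computation: starting from $f(\Phi(A))=T\bigl((T^{1/2}A)^\circ\bigr)$, apply the linear-image polarity formula together with $T^*=T$ and the symmetry of $T^{1/2}$ to rewrite this as $T^{1/2}(A^\circ)=\Phi(\alpha(A))$. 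Once $f=\Phi\,\alpha\,\Phi^{-1}$, conjugacy with $\alpha$ is immediate, and conjugacy with $\sigma$ then follows by composing with the equivariant homeomorphism furnished by Theorem~1.1; in particular $f$ inherits from $\sigma$ the property of having a unique fixed point, so $f$ is based-free.

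For part (2), suppose $T$ is symmetric but not positive-definite. Since $T$ is a symmetric isomorphism, the spectral theorem gives an orthonormal eigenbasis with real nonzero eigenvalues, at least one of which is negative. I would diagonalize and reduce to producing fixed points of $f$, i.e.\ sets $A\in\mathcal K_{(0),b}^n$ with $T(A^\circ)=A$, equivalently $A^\circ=T^{-1}(A)$. In an eigenbasis this decouples coordinate-wise, and the presence of a negative eigenvalue lets me build genuinely distinct fixed points: one constructs symmetric compact convex bodies (for instance suitable ellipsoids, or products of lower-dimensional fixed configurations) invariant under the prescribed scaling/reflection pattern, and shows a one-parameter family of them all satisfy the fixed-point equation, yielding infinitely many. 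The key is to verify that the polar of each candidate is exactly its $T^{-1}$-image, which reduces to a finite-dimensional computation on the eigen-coordinates.

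The main obstacle I expect is part (2): constructing an explicit infinite family of fixed bodies in $\mathcal K_{(0),b}^n$ and rigorously checking the polarity identity $A^\circ=T^{-1}(A)$ for each, while ensuring the members are genuinely distinct and lie in the bounded-interior class. The conjugacy in part (1) is essentially a clean formal computation once the linear-image polarity formula and the square-root trick are in place, so the real work lies in exhibiting the fixed points forced by a negative eigenvalue.
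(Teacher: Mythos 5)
Your part (1) is correct and is essentially the paper's own argument: the paper writes $T=U^\top R\,R\,U$ with $R$ diagonal positive and conjugates $\alpha$ by the induced map of $\Psi=U^\top R$, which is the same square-root trick as your $\Phi=\widetilde{T^{1/2}}$; both rest on the identity $(SA)^\circ=(S^\top)^{-1}(A^\circ)$ and on the continuity of induced maps of linear isomorphisms (Remark~\ref{rem:continuity-decreasing-inv}).

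Part (2), however, has a genuine gap, concentrated exactly where you located the ``main obstacle''. First, the claim that the fixed-point equation \emph{decouples coordinate-wise in an eigenbasis} is false: polarity does not respect orthogonal direct sums (for instance $([-1,1]\times[-1,1])^\circ$ is the cross-polytope $\{|x_1|+|x_2|\le 1\}$, not the product of the one-dimensional polars), so ``products of lower-dimensional fixed configurations'' do not satisfy the required identity. Second, and more seriously, ellipsoids cannot produce infinitely many fixed points in general. For an origin-centered ellipsoid $E=M^{-1/2}\mathbb{B}$ with $M$ symmetric positive-definite, one has $E^\circ=M^{1/2}\mathbb{B}$, and $T(E^\circ)=E$ is equivalent to $TMT=M^{-1}$, i.e.\ $(MT)^2=I$. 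If $T$ is negative definite --- e.g.\ $T=-I$, precisely the case behind the equation $X^\circ=-X$ emphasized in Remark~\ref{rem:infinite-fixed points} --- then $MT$ is similar to the negative-definite matrix $M^{1/2}TM^{1/2}$, so all its eigenvalues are negative; together with $(MT)^2=I$ and diagonalizability this forces $MT=-I$, i.e.\ $M=-T^{-1}$. Hence in that case there is \emph{exactly one} ellipsoidal fixed point, and the one-parameter families you propose simply do not exist. (When $T$ has mixed signature an infinite family of ellipsoids does exist, but the theorem must also cover negative-definite $T$.)

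The paper's construction is genuinely different and is the missing idea: after diagonalizing, it fixes an eigendirection $j$ with $d_{jj}<0$ and uses the round cone $A_j=\{a\in\mathbb{R}^n: a_j\ge (\sum_{i\ne j}a_i^2)^{1/2}\}$, which satisfies $W(A_j)=A_j^\circ$ for the sign matrix $W=S^{-1}DS^{-1}$; it truncates and fattens $A_j$ via the map $\psi_t$ of (\ref{eq:funcion psi}) to land in $\mathcal{K}_{(0),b}^n$, and then --- the key step --- applies the Milman--Rotem geometric mean $g$ to the pair $\big(\psi_t(A_j),\psi_t(A_j^\circ)^\circ\big)$, using $(\Gamma1)$, $(\Gamma2)$ and $(\Gamma6)$, to upgrade these approximate solutions into exact solutions $P_t$ of $X^\circ=W(X)$. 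Infinitude is then automatic: $P_t\to A_j$ as $t\to0$ and $A_j$ is unbounded, so the family $\{P_t\}$ cannot be finite; finally $Y_t=U^\top S(P_t)$ are fixed points of $f$. Any repair of your outline needs some such non-ellipsoidal device (the geometric mean, or an explicit family such as perturbed simplices for $T=-I$) to handle the definitely-negative part of the spectrum.
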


Theorem~\ref{thm:caracterizacion-decreasing}, in combination with Theorem \ref{Theo:main 1} and Theorem~A, 
yields the following corollary that could be of interest given its  relation with Anderson's Problem. 

\begin{corollaryy}
\label{cor:answer}
Every based-free decreasing involution  $f:\mathcal K_0^n\to\mathcal K_0^n$ is conjugate with the standard involution on $Q$. Moreover, $f$ is of the form  $f(A)=T(A^\circ)$ for some positive-definite linear isomorphism $T:\mathbb{R}^n\rightarrow\mathbb{R}^n.$ 
\end{corollaryy}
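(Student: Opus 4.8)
The plan is to deduce this corollary formally by chaining Theorem~A with Theorems~\ref{Theo:main 1} and~\ref{thm:caracterizacion-decreasing}; essentially all of the real content has already been placed in those statements, so the task here is only to assemble them correctly. First I would fix an arbitrary based-free decreasing involution $f:\mathcal K_0^n\to\mathcal K_0^n$. By definition $f$ satisfies conditions (D1) and (D2), so Theorem~A immediately supplies a symmetric linear isomorphism $T:\mathbb R^n\to\mathbb R^n$ with $f(A)=T(A^\circ)$ for every $A\in\mathcal K_0^n$. This already identifies the algebraic form of $f$; what remains is to pin down the sign behaviour of $T$ and then to transfer the conjugacy.

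Next I would invoke the hypothesis that $f$ is based-free, i.e.\ that it has a \emph{unique} fixed point, and read Theorem~\ref{thm:caracterizacion-decreasing}(2) contrapositively. Indeed, if $T$ were not positive-definite, then Theorem~\ref{thm:caracterizacion-decreasing}(2) would produce infinitely many fixed points of $f$ inside $\mathcal K_{(0),b}^n$, contradicting uniqueness. Hence $T$ must be positive-definite, which is exactly the ``Moreover'' clause of the corollary.

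Finally, with $T$ positive-definite in hand, Theorem~\ref{thm:caracterizacion-decreasing}(1) gives an equivariant homeomorphism conjugating $f$ with the polar mapping $\alpha$, while Theorem~\ref{Theo:main 1} gives one conjugating $\alpha$ with the standard involution $\sigma$ on $Q$. Composing these two equivariant homeomorphisms yields an equivariant homeomorphism conjugating $f$ with $\sigma$, as desired. The only point that requires a line of justification is the transitivity of conjugacy: if $g$ intertwines $(X,\beta)$ with $(Y,\tau)$ and $h$ intertwines $(Y,\tau)$ with $(Z,\rho)$, then $h\circ g$ is a homeomorphism satisfying $(h\circ g)\circ\beta=h\circ\tau\circ g=\rho\circ(h\circ g)$. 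Since the corollary is a direct formal consequence of results already established, I do not expect a genuine obstacle here; the entire subtlety lies in having correctly separated the positive-definite and non-positive-definite cases in Theorem~\ref{thm:caracterizacion-decreasing}, which the based-free hypothesis then uses to select the correct branch.
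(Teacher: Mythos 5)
Your proposal is correct and follows essentially the same route as the paper, which obtains the corollary precisely by combining Theorem~A (to get the form $f(A)=T(A^\circ)$ with $T$ symmetric), the contrapositive of Theorem~\ref{thm:caracterizacion-decreasing}(2) (to force $T$ positive-definite from the uniqueness of the fixed point), and then Theorem~\ref{thm:caracterizacion-decreasing}(1) together with Theorem~\ref{Theo:main 1} and transitivity of conjugacy. Your write-up merely makes explicit the details the paper leaves implicit.
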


In terms of \cite[Definition 4]{ArtsteinMilman2007}, one can say that a duality on $\mathcal{K}_0^n$  is topologically equivalent to the polar mapping if and only if  it has a unique fixed point.

Let us briefly describe the contents and structure of the paper. In Section \ref{sec:prelim}, we introduce the notation and basic results  regarding $\mathcal K^n_0$, the Attouch-Wets metric and  the polar set. We also introduce some tools from the theory of Hilbert cube manifolds, $\rm{ANR}$'s and $G$-spaces that will be used later. Section \ref{sec:K0n Hilbert cube} is dedicated to calculate the topological structure of the space $\mathcal{K}_0^n,$ $n\geq2.$ In Theorem  \ref{thm:K0cuboHilbert}, we  show that $(\mathcal{K}_0^n,d_{AW})$  is homeomorphic to $Q$. Additionally, in Corollary \ref{cor:topo-strucK0b}, we prove that the subspace $\mathcal{K}_{0,b}^n$ consisting of all compact convex sets containing $0$ is homeomorphic to the Hilbert cube with a point removed. 

Section \ref{sec:K0n Z2-AR} is dedicated to prove Theorem~\ref{Theo:main 1}. We begin by summarizing the basic properties of the polar mapping $\alpha:\mathcal{K}_0^n\rightarrow\mathcal{K}_0^n$ and the $\mathbb Z_2$-action induced by it.  After recalling some results from the theory of $G$-spaces,  we prove that the $\mathbb{Z}_2$-space  $(\mathcal{K}_{0}^n,\alpha)$ is  $\mathbb{Z}_2$-contractible (Theorem \ref{thm:K0-Z2-contractible}). This allows us to prove that the polar mapping is conjugate to $\sigma$ (Theorem~\ref{Theo:main 1}), among other results (Corollary~\ref{cor:K0-AR}). 

In Section \ref{sec:polarpreservingmap}, we investigate the properties of the maps on $\mathbb{R}^n$  that preserve polarity. A map $f:\mathbb{R}^n\rightarrow\mathbb{R}^n$ is called \textit{polar preserving map}   if  $f(A^\circ)=f(A)^\circ$ for all $A\in\mathcal{K}_{0}^n.$ Proposition \ref{prop:injective-polarpreservingmap} is the main result of this section. There, we prove that every injective polar preserving map $f:\mathbb{R}^n\rightarrow\mathbb{R}^n$ must be an orthogonal map.

In Section~\ref{sec:other standard involutions}, 
we characterize based-free decreasing involutions on $\mathcal K^n_0$. Namely, we prove Theorem~\ref{thm:caracterizacion-decreasing}. We finish the paper with some remarks and questions that could be of general interest (Section~\ref{sec:remarks}).

\section{Preliminaries}
\label{sec:prelim}

We begin by introducing the notation and basic results we use throughout the work. The Euclidean space $\mathbb{R}^n,$ $n\geq2,$ is endowed with the standard scalar product $\langle\cdot,\cdot\rangle.$ The corresponding Euclidean norm and closed unit ball are denoted by $\|\cdot\|$ and $\mathbb{B},$ respectively. The topology on $\mathbb{R}^n$ is the one determined by $\|\cdot\|.$ 
If $A\subset\mathbb R^n$ is an arbitrary subset, the \textit{closed convex hull} of $A$ is denoted by $\cco(A)$.
For every pair of points $a,b\in\mathbb R^n$, we denote by $[a,b]$ the segment with end points $a$ and $b$. Namely,
$$[a,b]:=\{(1-t)a+tb: t\in [0,1]\}.$$

By $\mathcal{K}^n$ we denote the family of all nonempty closed and convex subsets of $\mathbb R^n$.
The family of all elements in $\mathcal{K}^n$ containing the origin $0\in\mathbb R^n$ will be denoted by $\mathcal{K}_0^n$, and the elements of $\mathcal{K}_0^n$ containing the origin in its interior will be denoted by $\mathcal{K}_{(0)}^n$. We also use the symbol $\mathcal{K}_{b}^n$ to denote the  elements of  $\mathcal{K}^n$ that are compact, and by following this logic,  we   define the families
$$\mathcal{K}_{0,b}^n:=\mathcal{K}_{0}^n\cap \mathcal{K}_{b}^n\;\text{ and }\;\mathcal{K}_{(0),b}^n:=\mathcal{K}_{(0)}^n\cap \mathcal{K}_{b}^n.$$

  On $\mathcal{K}^n$ one can consider different  topologies  (see, e.g. \cite{Beer1993}). In this work, we are interested in the Attouch-Wets topology $\tau_{AW}.$  This topology is determined by the Attouch-Wets metric $d_{AW}$ which can be  defined for $A,K\in\mathcal{K}^n$ as
\begin{equation}
\label{eq:dAW}
d_{AW}(A,K):=\sup_{j\in\mathbb{N}}\left\{\min\left\{\frac{1}{j},\sup_{\|x\|<j}|d(x,A)-d(x,K)|\right\}\right\}.
\end{equation}
This definition is the one used in \cite{SakaiYaguchi2006} and  it is convenient to highlight that the formula in (\ref{eq:dAW}) is equivalent to the Attouch-Wets metric defined in \cite[Definition 3.1.2]{Beer1993}.  

Throughout the work we will use several times the following fact, which can be consulted in \cite[Theorem 3.1.4 and Exercise 5.1.10(b)]{Beer1993} or \cite[Remark 1]{SakaiYang2007}. 
\begin{fact}\label{fact:topologies are the same}
On $\mathcal{K}^n,$ the Attouch-Wets topology,   the Fell topology $\tau_F$ and the Wijsman topology $\tau_W$ are the same.
\end{fact}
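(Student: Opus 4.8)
The plan is to prove the three coincidences by exploiting two special features of $\mathbb{R}^n$: it is \emph{proper} (boundedly compact), so that closed bounded sets are compact, and every distance functional $d(\cdot,A)$ is $1$-Lipschitz, so that the family of all such functionals is equicontinuous. Since $\mathcal{K}^n$ carries each of the three topologies as the subspace topology inherited from the corresponding topology on the hyperspace of \emph{all} nonempty closed subsets of $\mathbb{R}^n$, and since convexity plays no role in any of the estimates below, I would establish the equalities on the ambient hyperspace of closed sets and then simply restrict. Recall that $\tau_W$ is the initial topology induced by the maps $A\mapsto d(x,A)$, so a net converges in $\tau_W$ iff $d(x,A_\lambda)\to d(x,A)$ for every $x$; that $\tau_{AW}$ is metric; and that $\tau_F$ is generated by the \emph{hit} sets $\{A:A\cap V\neq\emptyset\}$ ($V$ open) together with the \emph{miss} sets $\{A:A\cap K=\emptyset\}$ ($K$ compact). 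Because net convergence determines any topology, it suffices either to compare convergent nets or to compare the generating subbases.

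First I would show $\tau_{AW}=\tau_W$. The inclusion $\tau_W\subseteq\tau_{AW}$ is immediate: uniform convergence of distance functionals on bounded sets forces pointwise convergence. For the reverse inclusion, suppose $d(x,A_\lambda)\to d(x,A)$ for every $x$. Since the functionals $d(\cdot,A_\lambda)$ are uniformly $1$-Lipschitz and bounded subsets of $\mathbb{R}^n$ have compact closure, an Arzel\`a--Ascoli / equicontinuity argument upgrades this pointwise convergence to uniform convergence on every bounded set, which is exactly convergence in $d_{AW}$. Hence every $\tau_W$-convergent net is $d_{AW}$-convergent to the same limit, giving $\tau_{AW}\subseteq\tau_W$ and therefore $\tau_{AW}=\tau_W$.

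Next I would show $\tau_W=\tau_F$ by comparing subbases. The Fell hit sets are Wijsman-open: writing an open $V$ as a union of open balls and using $A\cap\Int(x+r\mathbb B)\neq\emptyset\iff d(x,A)<r$ expresses $\{A:A\cap V\neq\emptyset\}$ as a union of Wijsman-subbasic sets. The Fell miss sets are also Wijsman-open: if $A\cap K=\emptyset$ with $K$ compact and $A$ closed, then $\delta:=d(K,A)>0$; covering $K$ by finitely many balls $\Int(k_i+\tfrac{\delta}{2}\mathbb B)$, the triangle inequality shows that the Wijsman-open set $\bigcap_i\{A':d(k_i,A')>\tfrac{\delta}{2}\}$ is a neighborhood of $A$ contained in $\{A':A'\cap K=\emptyset\}$. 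Thus $\tau_F\subseteq\tau_W$. Conversely, each Wijsman-subbasic set is Fell-open: $\{A:d(x,A)<r\}=\{A:A\cap\Int(x+r\mathbb B)\neq\emptyset\}$ is a hit set, while $\{A:d(x,A)>r\}=\{A:A\cap(x+r\mathbb B)=\emptyset\}$ is a miss set precisely because the closed ball $x+r\mathbb B$ is compact in $\mathbb{R}^n$. Hence $\tau_W\subseteq\tau_F$, so $\tau_W=\tau_F$.

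The properness of $\mathbb{R}^n$ is what makes everything work, and it is also where the main obstacle lies: in a general metric space these three topologies genuinely differ, so the two delicate points are (i) the equicontinuity-plus-compactness step that turns pointwise into locally uniform convergence, and (ii) the identification of the Wijsman ``miss closed ball'' sets with Fell ``miss compactum'' sets; both rely on closed bounded sets being compact. Combining $\tau_{AW}=\tau_W$ with $\tau_W=\tau_F$ gives $\tau_{AW}=\tau_W=\tau_F$ on the hyperspace of all closed subsets of $\mathbb{R}^n$, and restricting these coincident topologies to $\mathcal{K}^n$ yields the Fact.
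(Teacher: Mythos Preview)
The paper does not actually prove this Fact; it is stated without proof and attributed to \cite[Theorem 3.1.4 and Exercise 5.1.10(b)]{Beer1993} and \cite[Remark 1]{SakaiYang2007}. Your proposal is a correct self-contained proof following the standard route (equicontinuity of the $1$-Lipschitz distance functionals plus local compactness of $\mathbb{R}^n$ to upgrade Wijsman to Attouch--Wets convergence; subbase comparison using compactness of closed balls for $\tau_W=\tau_F$), and thus goes beyond what the paper itself provides.
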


For the convenience of the reader, we recall the definition of $\tau_F$ and $\tau_W$. \textit{The Fell topology} on $\mathcal{K}^n$ is generated by the sets $U^-:=\{A\in\mathcal{K}^n: A\cap U\neq\emptyset\}$ and $(\mathbb{R}^n\setminus C)^+:=\{A\in\mathcal{K}^n:A\subset\mathbb{R}^n\setminus C\},$ where $U\subset\mathbb R^n$ is open and $C\subset\mathbb{R}^n$ is compact. This topology is also defined on $\mathcal{K}_{*}^n:=\mathcal{K}^n\cup\{\emptyset\}.$
On the other hand, the Wijsman topology on $\mathcal K^n$ is the one generated by the sets 
$$U^-(x,r):=\{A\in \mathcal{K}^n:d(x,A)<r\}$$
$$U^+(x,r):=\{A\in \mathcal{K}^n:d(x,A)>r\}$$
where $x\in \mathbb R^n$ and $r>0$.

Recall that the \textit{ Hausdorff metric} between any  pair of nonempty  sets $A, K\subset\mathbb R^n$ is defined by any of the following equivalent expressions
\begin{align*}
d_H(A,K)&=\max\left\{\sup_{x\in A}d(x,K), \sup_{x\in K}d(x,A)\right\}\\
&=\text{inf}\left\{\lambda>0:A\subseteq K+\lambda\mathbb{B},\ K\subseteq A+\lambda\mathbb{B}\right\}\\
&=\sup_{x\in\mathbb R^n}|d(x, A)-d(x,K)|
\end{align*}
(see, e.g. \cite[\S 3.2]{Beer1993}). 

It is important to point out the following fact.
\begin{fact}\label{fact:d_H y d_AW coincinden en K_b}
(\cite[Theorem 3.2]{SakaiYaguchi2006})
On the class of compact convex subsets $\mathcal K^n_b$,  the Attouch-Wets metric and the Hausdorff metric $d_H$ determine the same topology.
\end{fact}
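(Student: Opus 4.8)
The plan is to prove that the two metrics induce the same topology by showing that, for sequences, $d_H$-convergence and $d_{AW}$-convergence are equivalent; since both are metric topologies (hence determined by their convergent sequences), this suffices. The whole argument rests on the common quantity $|d(x,A)-d(x,K)|$ that appears in both definitions: in the Hausdorff metric one takes its supremum over all of $\mathbb{R}^n$ (the third formula for $d_H$), whereas in $d_{AW}$, formula (\ref{eq:dAW}), one takes the supremum only over the open ball $\{\|x\|<j\}$, caps the result at $1/j$, and finally takes the supremum over $j\in\mathbb{N}$.

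One inclusion is immediate and in fact holds on all of $\mathcal{K}^n$: for every $j$ the inner supremum over $\{\|x\|<j\}$ is at most the supremum over $\mathbb{R}^n$, which equals $d_H(A,K)$, so $\min\{1/j,\cdot\}\le d_H(A,K)$ and hence $d_{AW}(A,K)\le d_H(A,K)$. Thus the identity $(\mathcal{K}^n_b,d_H)\to(\mathcal{K}^n_b,d_{AW})$ is $1$-Lipschitz, so $d_H$-convergence always implies $d_{AW}$-convergence.

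For the reverse implication I would first unwind the definition of $d_{AW}$ to observe that $d_{AW}(A_m,A)\to 0$ forces, for each fixed $j$, the uniform convergence $\sup_{\|x\|<j}|d(x,A_m)-d(x,A)|\to 0$ (indeed, once $d_{AW}(A_m,A)<\varepsilon$ with $\varepsilon<1/j$, the $j$-th inner supremum is already $<\varepsilon$). In other words, $d(\cdot,A_m)\to d(\cdot,A)$ uniformly on every bounded set. The key geometric step, and the one where convexity is indispensable, is to upgrade this to a \emph{uniform} bound on the $A_m$. Since $A$ is compact, fix $R$ with $A\subseteq R\mathbb{B}$ and set $R':=R+2$. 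Picking any point of $A$ and using uniform convergence on bounded sets, each $A_m$ eventually meets the open ball of radius $R'$; on the other hand, on the sphere $\{\|x\|=R'\}$ one has $d(x,A)\ge 2$, so uniform convergence (on any fixed ball containing this sphere) gives $d(x,A_m)>0$ there for large $m$, i.e. $A_m$ misses that sphere. As $A_m$ is convex, hence connected, a connected set meeting the open ball while avoiding its bounding sphere must lie inside it; therefore $A_m\subseteq R'\mathbb{B}$ for all large $m$.

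Once uniform boundedness is secured, I would finish by noting that for sets contained in $R'\mathbb{B}$ the Hausdorff distance only ``sees'' a bounded region: writing $d_H$ in its $\max$-form and using that $|d(a,A_m)-d(a,A)|=d(a,A_m)$ for $a\in A$ (and symmetrically for points of $A_m$), one obtains $d_H(A_m,A)=\sup_{x\in R'\mathbb{B}}|d(x,A_m)-d(x,A)|$. Choosing an integer $R''>R'$, this supremum is at most $\sup_{\|x\|<R''}|d(x,A_m)-d(x,A)|$, which tends to $0$ by the uniform-convergence step. Hence $d_{AW}$-convergence implies $d_H$-convergence, completing the equivalence. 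The main obstacle is exactly the uniform boundedness step: without convexity it genuinely fails, since the two-point sets $\{0,me_1\}$ converge to $\{0\}$ in $d_{AW}$ while their Hausdorff distance diverges, so the proof must exploit connectedness of convex bodies rather than any metric estimate alone.
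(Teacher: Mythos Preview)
The paper does not give its own proof of this statement: it is recorded as a \emph{Fact} and attributed to \cite[Theorem 3.2]{SakaiYaguchi2006}, with no argument supplied. So there is nothing to compare your approach against on the paper's side.

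That said, your proposal is a correct and self-contained proof. The inequality $d_{AW}\le d_H$ is immediate from the definitions, and your sequential argument for the converse is sound: the key step---extracting a uniform bound $A_m\subseteq R'\mathbb{B}$ for large $m$ from $d_{AW}$-convergence---is exactly where convexity (via connectedness) enters, and your sphere-separation argument is clean. Once all sets lie in $R'\mathbb{B}$, the identity $d_H(A_m,A)=\sup_{x\in R'\mathbb{B}}|d(x,A_m)-d(x,A)|$ (both inequalities hold, as you indicate) reduces Hausdorff convergence to uniform convergence of the distance functions on a fixed bounded set, which you already have. Your counterexample $\{0,me_1\}$ correctly isolates why convexity cannot be dropped.

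One cosmetic point: when you write ``once $d_{AW}(A_m,A)<\varepsilon$ with $\varepsilon<1/j$, the $j$-th inner supremum is already $<\varepsilon$'', this is precisely the content of the equivalence~(\ref{eq:Lemma-Ananda}) stated earlier in the paper, so you could just cite that. Otherwise the argument stands as written.
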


In \cite[Lemma 2.3]{DonJuanJonardPerezLopezPoo}, the following useful property of $d_{AW}$  was established: Let $\varepsilon>0$ be such that $\frac{1}{j+1}<\varepsilon\leq\frac{1}{j}$ for some $j\in\mathbb{N},$ and let $A,K\in\mathcal{K}^n.$ Then,
\begin{equation}
\label{eq:Lemma-Ananda}
d_{AW}(A,K)<\varepsilon\textit{ if and only if }\sup_{\|x\|<j}|d(x,A)-d(x,K)|<\varepsilon.
\end{equation}
This relation holds not only for elements of $\mathcal K^n$, but for any pair of nonempty closed subsets. However, in the context we are working on, we can use the Euclidean structure of $\mathbb R^n$ to provide a sharper relationship between $d_{AW}$ and $d_H$ on $\mathcal K^n_0$.

\begin{lemma}
\label{lem:car-dAW}
Let $r>0$. For any $z\in r\mathbb B$ and $A, K\in\mathcal K^n_0$, the following equalities hold. 
\begin{enumerate}
\item  $d(z,A)=d(z, A\cap r\mathbb B)$.
\item $\sup\limits_{\|x\|<r}|d(x,A)-d(x,K)|=\sup\limits_{x\in r\mathbb B}|d(x,A)-d(x,K)|.$
\item $d_H\big(A\cap r\mathbb B,K\cap r\mathbb B\big)=\sup\limits_{x\in r\mathbb B}|d(x,A)-d(x,K)|$.  

\item $d_{AW}(A,K)=\sup\limits_{j\in\mathbb{N}}\left\{\min\left\{\frac{1}{j},d_H\big(A\cap j\mathbb B,K\cap j\mathbb B\big)\right\}\right\}.$  

\item For every integer $j\geq 1$ and every $\varepsilon \in \left(\frac{1}{j+1}, \frac{1}{j}\right]$,
$$d_{AW}(A,K)<\varepsilon\textit{ if and only if }d_H\big(A\cap j\mathbb B,K\cap j\mathbb B\big)<\varepsilon.$$
\end{enumerate}
\end{lemma}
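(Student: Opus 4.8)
The plan is to establish part (1) first, since it is the geometric core from which everything else follows by routine bookkeeping; the remaining parts then amount to combining (1) with the defining formula \eqref{eq:dAW}, the three equivalent forms of $d_H$ recalled in Section~\ref{sec:prelim}, and the pre-existing relation \eqref{eq:Lemma-Ananda}.

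For part (1), fix $z\in r\mathbb B$ and let $a^\ast$ denote the metric projection of $z$ onto the closed convex set $A$, so that $d(z,A)=\|z-a^\ast\|$. Since $A\cap r\mathbb B\subseteq A$, the inequality $d(z,A)\le d(z,A\cap r\mathbb B)$ is immediate, so it suffices to show $a^\ast\in r\mathbb B$, which gives the reverse inequality. The crucial hypothesis is that $0\in A$: I would invoke the variational characterization of the projection, $\langle z-a^\ast,a-a^\ast\rangle\le 0$ for all $a\in A$, and test it with $a=0$ to obtain $\langle z,a^\ast\rangle\ge\|a^\ast\|^2$. Combining this with Cauchy--Schwarz yields $\|a^\ast\|^2\le\langle z,a^\ast\rangle\le\|z\|\,\|a^\ast\|\le r\|a^\ast\|$, hence $\|a^\ast\|\le r$. (Equivalently, one parametrizes the segment $[0,a^\ast]\subseteq A$ and notes that minimality of $t=1$ for the quadratic $t\mapsto\|z-ta^\ast\|^2$ on $[0,1]$ forces $\langle z,a^\ast\rangle\ge\|a^\ast\|^2$.) This is the only place where convexity and the assumption $0\in A$ are genuinely used, and it is the step I expect to be the crux.

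Part (2) is a continuity-and-density argument: the map $x\mapsto|d(x,A)-d(x,K)|$ is $2$-Lipschitz, hence continuous, and the open ball $\{\|x\|<r\}$ is dense in $r\mathbb B$, so the two suprema coincide. For part (3), I would use both forms of the Hausdorff metric. Writing $A_r:=A\cap r\mathbb B$ and $K_r:=K\cap r\mathbb B$, the bound $\sup_{x\in r\mathbb B}|d(x,A)-d(x,K)|\le d_H(A_r,K_r)$ follows from the expression $d_H(A_r,K_r)=\sup_{x}|d(x,A_r)-d(x,K_r)|$ together with part (1), which identifies $d(x,A)=d(x,A_r)$ and $d(x,K)=d(x,K_r)$ for $x\in r\mathbb B$. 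For the reverse inequality I would use the form $d_H(A_r,K_r)=\max\{\sup_{a\in A_r}d(a,K_r),\ \sup_{k\in K_r}d(k,A_r)\}$: any $a\in A_r$ lies in $r\mathbb B$ and in $A$, so by part (1) $d(a,K_r)=d(a,K)=|d(a,A)-d(a,K)|$, which is bounded by $\sup_{x\in r\mathbb B}|d(x,A)-d(x,K)|$; the symmetric estimate controls the second term in the maximum.

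Finally, parts (4) and (5) require no new ideas. For part (4), I would substitute $r=j$ into parts (2) and (3) to rewrite $\sup_{\|x\|<j}|d(x,A)-d(x,K)|$ as $d_H(A\cap j\mathbb B,K\cap j\mathbb B)$ inside the defining formula \eqref{eq:dAW}. For part (5), the same substitution turns the already-known equivalence \eqref{eq:Lemma-Ananda} into the claimed statement. The whole lemma thus reduces to the single projection estimate of part (1).
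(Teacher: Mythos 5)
Your proposal is correct, and for parts (2)--(5) it coincides with the paper's own proof: the same continuity-plus-density argument for (2), the same two-sided estimate for (3) using the two expressions for $d_H$ together with part (1), and the same substitutions into (\ref{eq:dAW}) and (\ref{eq:Lemma-Ananda}) for (4) and (5). The genuine difference is in the crux, part (1). The paper argues by contradiction: assuming the nearest point $a\in A$ to $z$ satisfies $\|a\|>r$, it introduces the auxiliary point $p=\frac{\langle z,a\rangle}{\|a\|^2}a\in[-a,a]\cap r\mathbb B$ and, splitting into the cases $p\in[-a,0]$ and $p\in[0,a]$, uses the Pythagorean theorem to exhibit a point of $A$ (either $0$ or $p$) strictly closer to $z$ than $a$. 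You instead prove directly that the projection $a^\ast$ lies in $r\mathbb B$ by testing the variational inequality $\langle z-a^\ast,\,a-a^\ast\rangle\le 0$ at $a=0$ and applying Cauchy--Schwarz; your parenthetical one-dimensional minimization along the segment $[0,a^\ast]$ is an equally valid way to obtain $\langle z,a^\ast\rangle\ge\|a^\ast\|^2$ without quoting the obtuse-angle criterion. Your route is shorter, avoids the case analysis, and works verbatim in any Hilbert space; the paper's argument is more elementary in that it invokes only the Pythagorean theorem rather than the characterization of metric projections onto convex sets. Both exploit convexity and the hypothesis $0\in A$ at exactly the same point, so beyond this stylistic choice the two proofs are equivalent.
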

\begin{proof}

(1) Let $a\in A$ be the unique element such that $d(z,A)=\|z-a\|$. 
Assume that   $\|a\|>r$ and consider the point $p:=\frac{\langle z,a\rangle}{\|a\|^2}a$.  By the Cauchy-Schwarz inequality, $\|p\|\leq \|z\|\leq r$ and therefore $p$ lies in the closed segment $[-a,a]\cap r\mathbb B$.

If $p\in [-a,0]\cap r\mathbb B $, then $\|p\|< \|p-a\|$ and by the  Pythagorean theorem we conclude that
$$\|z\|^2=\|z-p\|^2+\|p\|^2<\|z-p\|^2+\|p-a\|^2=\|z-a\|^2=d(z, A)^2.$$
This proves that $0$ is an element of $A$ closer to $z$ than $a$, a contradiction. 

On the other hand, if $p\in[0,a]\cap r\mathbb B $, clearly $p\in A\setminus\{a\}$. Using the Pythagorean theorem again,  we obtain that $p$ is an element of $A$ closer to $z$ than $a$. From these contradictions we conclude that  $\|a\|\leq r$ and therefore $a\in A\cap r\mathbb B$. This yields to
$$d(z,A)\leq d(z, A\cap r\mathbb B)\leq \|z-a\|=d(z, A),$$ which proves the equality.






(2)  Let $f:\mathbb R^n\to\mathbb R$ be defined as $f(x)=|d(x,A)-d(x,K)|$, and consider the set
$$U:=f\big(\Int(r\mathbb B)\big)=\{|d(x,A)-d(x,K)|:\|x\|<r\}.$$
Using the continuity of $f$  we obtain that
$$U\subset f(r\mathbb B)=f\Big(\overline{\Int (r\mathbb B)}\Big)\subset \overline {f\big(\Int(r\mathbb B)\big)}=\overline U.$$
Now, using the fact that $f(r\mathbb B)$ is closed, we infer that $\overline U=f(r\mathbb B)$. Thus
$$\sup U=\sup \overline U=\sup f(r\mathbb B),$$
as desired.

(3) Notice that $(1)$ of this proposition and the definition of $d_H$ imply that
\begin{align*}
   d_H\big(A\cap r\mathbb B,K\cap r\mathbb B\big)&=\sup_{x\in\mathbb R^n}|d(x,A\cap r\mathbb B)-d(x,K\cap r\mathbb B)|\\
   &\geq \sup_{x\in r\mathbb B}|d(x,A\cap r\mathbb B)-d(x,K\cap r\mathbb B)|\\
  & =\sup _{x\in r\mathbb B}|d(x,A)-d(x,K)|.
\end{align*}

On the other hand,  if $b\in K\cap r\mathbb B$ then 
\begin{align*}
   d(b, A\cap r\mathbb B)&=|d(b,A\cap r\mathbb B)-d(b,K\cap r\mathbb B)|\\
   &\leq \sup_{x\in r\mathbb B}|d(x,A\cap r\mathbb B)-d(x,K\cap r\mathbb B)|\\
   &=\sup _{x\in r\mathbb B}|d(x,A)-d(x,K)|.
\end{align*}

 Similarly, if $a\in A\cap r\mathbb B$, then  $d(a,K\cap r\mathbb B)\leq \sup _{x\in r\mathbb B}|d(x,A)-d(x,K)|.$ Hence, $d_H(A\cap r\mathbb B, K\cap r\mathbb B)\leq \sup _{x\in r\mathbb B}|d(x,A)-d(x,K)|$. This inequality completes the proof of (3). 

(4) Follows directly from (1),  (2) and (3) of this proposition.

(5) Follows from (2) and (3) of this proposition, and  equivalence (\ref{eq:Lemma-Ananda}).
\end{proof}

 We recall some basic properties of the polar set (\ref{eq:polarsetdefn}) that will be used several times along the paper.
 \begin{itemize}
     \item[(P1)] $A^{\circ}\in\mathcal{K}_0^n$ for every nonempty set $A\subset\mathbb R^n$  and $A^{\circ}\in\mathcal{K}_{(0),b}^n$ provided that $A\in\mathcal{K}_{(0),b}^n$. Moreover, if $A$ and $A^\circ$ belong to $\mathcal{K}_{0,b}^n$, then  $A\in\mathcal{K}_{(0),b}^n$. 
     \item [(P2)] If $A\subset B\subset \mathbb{R}^n$, then $B^\circ\subset A^\circ$.
     \item [(P3)] $\left (\bigcup K_{\alpha}\right)^\circ=\bigcap K_{\alpha}^\circ$.
     \item [(P4)] $\left (\bigcap K_{\alpha}\right)^\circ=\cco\left(\bigcup K_{\alpha}^\circ\right)$.
     \item [(P5)] $A^\circ=A$ if and only if $A=\mathbb B$.
     \item [(P6)] $(\mathbb R^n)^\circ=\{0\}$ and $\{0\}^\circ=\mathbb R^n$.
     \item[(P7)] For every $r>0$, $(rA)^\circ=r^{-1}A^{\circ}$. Particularly, $(r\mathbb B)^{\circ}=\frac{1}{r}\mathbb B$.
 \end{itemize}
 Finally, we recall the \textit{Bipolar Theorem} which is going to play an important role in this work.
 
\begin{itemize}
    \item[(P8)] \textbf{(The Bipolar Theorem)} \textit{For every $A\in\mathcal K^n_0$, $(A^{\circ})^\circ=A$.}
 \end{itemize}

 We refer the reader to \cite[Chapter IV]{Barvinok} and \cite[\S 2.2]{ThompsonAC} for a deeper understanding of the polar map.

In \cite{MilmanRotem2017}, V. Milman and L. Rotem introduced a new operation on $\mathcal{K}_{(0),b}^n.$ It is  called the geometric mean $g(A,K)$ of the compact convex sets $A,K\in\mathcal{K}_{(0),b}^n$. 
Since we will use it in the proof of
our main results, we include its definition  and basic properties.
For $A,K\in\mathcal{K}_{(0),b}^n$ and $m\in\mathbb{N},$ let $(A_m)_m$ and $(H_m)_m$ be sequences on $\mathcal{K}_{(0),b}^n$ defined as 
\begin{equation*}
\begin{matrix}
A_0=A & H_0=K\\
A_{m+1}=\frac{A_m+H_m}{2} & H_{m+1}=\left(\frac{A_m^\circ+H_m^\circ}{2}\right)^\circ.
\end{matrix} 
\end{equation*}
In \cite[Theorem 6]{MilmanRotem2017}, it is proved that $H_m\subseteq A_m,$ for all $m\in\mathbb{N},$ and that both sequences converge to the same limit on $(\mathcal{K}_{(0),b}^n,d_H)$. This allows to define \textit{the geometric mean of $A$ and $K$} as 
\begin{equation}\label{eq:geometric mean}
   g(A,K):=\lim_{m\to\infty}A_m=\lim_{m\to\infty}H_m, 
\end{equation}
where the limit is calculated with respect to $d_H.$
The fundamental properties of $g$ appear in \cite{MilmanRotem2017} and \cite{Rotem2016}. Below, we include those that are relevant for this work. Let $A,K\in\mathcal{K}_{(0),b}^n,$ then $g(A,K)\in\mathcal{K}_{(0),b}^n$ and the following hold:
\begin{itemize}
\item[($\Gamma1$)] $g(A,A)=A$ and $g(A,K)=g(K,A).$   
\item[($\Gamma2$)] $g(A^\circ,K^\circ)=g(A,K)^\circ.$
\item[($\Gamma3$)] $g(A,A^\circ)=\mathbb{B}.$
\item[($\Gamma4$)] $g(A_1,K_1)\subseteq g(A_2,K_2)$  if $A_1\subseteq A_2$ and $K_1\subseteq K_2,$ with $A_i,K_i\in\mathcal{K}_{(0),b}^n$ for $i=1,2.$

\item[($\Gamma5$)] The map $g:(\mathcal{K}_{(0),b}^n,d_H)\times(\mathcal{K}_{(0),b}^n,d_H)\rightarrow (\mathcal{K}_{(0),b}^n,d_H),$ sending $(A,K)$ to $g(A,K)$ is continuous.

\item[($\Gamma6$)] $g(TA,TK)=Tg(A,K)$ for every linear map $T:\mathbb{R}^n\rightarrow\mathbb{R}^n.$

\end{itemize}

Let us notice that in this work, the words \textit{map} and \textit{mapping} do not  mean continuous function. When necessary, we will emphasize if a map is continuous or not.

In Sections \ref{sec:K0n Hilbert cube} and \ref{sec:K0n Z2-AR} we will use several results and notions coming from the theory of retracts,  the theory of $Q$-manifolds and the theory of $G$-spaces. 
Let us quickly recall some of their fundamentals that will be used later. 

\subsection{$\mathrm{ANR}$ and $\mathrm{AR}$ spaces}

We say that a closed subset $Y$ of a topological space $X$ 
is a \textit{retract} of $X$ if there exists a continuous 
map $r:X\rightarrow Y$ such that $r(y)=y$ for all $y\in Y.$
The map $r$ is then called a \textit{retraction}. An \textit{absolute neighborhood retract} ($\mathrm{ANR}$) is a metrizable space $X$ such that for every metrizable space $Z$ containing $X$ as closed subset, there exists a neighborhood $U$ of $X$ in $Z$ and a retraction $r:U\rightarrow X.$ In this case, we write $X\in \mathrm{ANR}.$ If we can always take $U=Z,$ we say that $X$ is an \textit{absolute retract} ($\mathrm{AR}$), and  we write $X\in \mathrm{AR}.$ Clearly, every $\mathrm{AR}$ is a $\mathrm{ANR}$.
A simple example of an $\mathrm{AR}$ space is any closed convex subset of a Banach space (see, e.g., \cite[Theorem 1.5.1]{vanMillbook}).

Let us summarize some basic properties concerning the  $\mathrm{ANR}$ spaces.

\begin{itemize}
    \item [(R1)] Every open subspace of an $\mathrm{ANR}$ is an $\mathrm{ANR}$ (\cite[Chapter 3, Proposition 7.9]{Hu}).
    \item [(R2)] If every point of a separable metric space $X$ has a neighborhood which is an $\mathrm{ANR}$, then $X$ is a $\mathrm{ANR}$ (\cite[Chapter 4, Corollary 10.4]{Borsuk}).
    \item[(R3)] Every $\mathrm{AR}$ is contractible (\cite[Chapter 3, Theorem 7.1]{Hu}). 
    \item [(R4)]   Every contractible $\mathrm{ANR}$ is an $\mathrm{AR}$ (see \cite[Chapter 3, Proposition 7.2]{Hu} or \cite[Corollary 1.6.7]{vanMillbook}).
\end{itemize}

Finally, let us recall that a subset $Y$ of a topological space $X$ is called \textit{homotopy dense} if there exists a homotopy $H:X\times[0,1]\rightarrow X$ such that $H_0$ is the identity on $X$ and $H_t(X)\subseteq Y$ for all $0<t\leq1$.
As usual, $H_t$ denotes the map $H_t:X\to X$ given by $H_t(x):=H(x,t)$ for each $x\in X$. 
The importance of homotopy dense subsets is that they can characterize whether a metrizable space is a $\mathrm{ANR}$, as the following result shows.

\begin{itemize}
    \item [(R5)] Let $X$ be a metrizable space and $Y$ be a homotopy dense subset of $X$. Then $X$ is an $\mathrm{ANR}$ if and only if $Y$ is an $\mathrm{ANR}$ (\cite[Corollary 6.6.7]{Sakai2013book}).
\end{itemize}

For more information on the theory of retracts, we refer the reader to \cite{Borsuk,Hu,vanMillbook,Sakai2013book}.

\subsection{$Q$-Manifolds}

The \textit{Hilbert cube}, denoted by $Q,$ is the topological product $\prod_{i=1}^{\infty}[-1,1]$.  A \textit{Hilbert cube manifold} ($Q$-manifold) is a separable metrizable space that admits an open cover by sets homeomorphic to open subsets of $Q.$ 
In particular, we have the following. 
\begin{fact}\label{fact: open subset of a Q manifold}
Every nonempty open subset of a $Q$-manifold, is a $Q$ manifold too.
\end{fact}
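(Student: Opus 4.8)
The final statement to prove is Fact~\ref{fact: open subset of a Q manifold}: \emph{Every nonempty open subset of a $Q$-manifold is a $Q$-manifold too.}

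The plan is to reduce the claim to the defining local structure of a $Q$-manifold together with the analogous elementary fact about open subsets of the Hilbert cube $Q$ itself. Let $M$ be a $Q$-manifold and let $V\subseteq M$ be a nonempty open subset. By definition, $M$ admits an open cover $\{U_\lambda\}$ by sets each homeomorphic (via some homeomorphism $\varphi_\lambda$) to an open subset $W_\lambda$ of $Q$. First I would observe that $V$ is separable and metrizable, since these properties are hereditary for subspaces of a separable metrizable space, and $M$ is separable metrizable by definition of a $Q$-manifold. So it only remains to exhibit a suitable open cover of $V$.

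For the local structure, the key step is that the collection $\{U_\lambda\cap V\}_\lambda$ is an open cover of $V$ (open in $V$, hence open in $M$ since $V$ is open in $M$), and each member is homeomorphic to an open subset of $Q$. Indeed, fix $\lambda$ and consider the restriction $\varphi_\lambda\colon U_\lambda\to W_\lambda$. Then $\varphi_\lambda$ maps $U_\lambda\cap V$ homeomorphically onto the set $\varphi_\lambda(U_\lambda\cap V)$, which is an open subset of $W_\lambda$ because $U_\lambda\cap V$ is open in $U_\lambda$ and $\varphi_\lambda$ is a homeomorphism. Since $W_\lambda$ is open in $Q$, the set $\varphi_\lambda(U_\lambda\cap V)$ is also open in $Q$. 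Thus every point of $V$ lies in some $U_\lambda\cap V$, an open set homeomorphic to an open subset of $Q$, which is exactly the defining property of a $Q$-manifold.

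The only substantive ingredient behind this argument is the elementary transitivity-type observation that an open subset of an open subset of $Q$ is again an open subset of $Q$ (so that ``open subset of $W_\lambda$'' may be upgraded to ``open subset of $Q$''), which is immediate from the definition of the subspace topology. I do not anticipate a genuine obstacle here: the statement is essentially a formal consequence of the local definition of a $Q$-manifold, and the main point to be careful about is simply bookkeeping of which sets are open in which spaces and verifying that the separability and metrizability pass to the subspace $V$. The whole proof is short and foundational; its role is to justify later applications where one needs that open subsets arising in the analysis of $(\mathcal{K}_0^n,d_{AW})$ retain the $Q$-manifold structure.
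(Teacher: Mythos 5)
Your proof is correct and matches the paper's treatment: the paper states this fact without proof as an immediate consequence of the definition of a $Q$-manifold, and your argument is precisely the routine verification (restricting charts to $U_\lambda\cap V$ and noting separability and metrizability are hereditary) that justifies that claim.
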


It should be reminded that $Q$ is an $\mathrm{AR}$ (see e.g. \cite[Corollary 1.5.5]{vanMillbook}). This, in combination with properties (R1) and (R2), proves the following fact.

\begin{fact}\label{fact: Q manifolds are ANR}
Every $Q$ manifold is an $\mathrm{ANR}$.
\end{fact}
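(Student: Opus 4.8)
The plan is to assemble the three ingredients already isolated in the preliminaries: the fact that $Q$ is an $\mathrm{AR}$, property (R1), and property (R2). First I would fix a $Q$-manifold $M$. By the definition given above, $M$ is a separable metrizable space that admits an open cover $\{U_\lambda\}_{\lambda\in\Lambda}$ by sets, each of which is homeomorphic to an open subset of $Q$. The separability and metrizability are exactly the standing hypotheses needed to invoke (R2) at the end, so it is worth recording them explicitly at the outset.

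Next I would verify that each chart $U_\lambda$ is itself an $\mathrm{ANR}$. Since $Q$ is an $\mathrm{AR}$, it is in particular an $\mathrm{ANR}$. By (R1), every open subspace of an $\mathrm{ANR}$ is again an $\mathrm{ANR}$; hence every open subset of $Q$ is an $\mathrm{ANR}$. The only point requiring a word of justification is that being an $\mathrm{ANR}$ is a topological invariant, i.e. it is preserved by homeomorphism; this is immediate from the definition of $\mathrm{ANR}$, which refers only to the topological structure of the space and its embeddings as a closed subset. Consequently each $U_\lambda$, being homeomorphic to an open subset of $Q$, is an $\mathrm{ANR}$.

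Finally I would conclude by a local-to-global argument. Every point $x\in M$ lies in some chart $U_\lambda$, which is an open, hence neighborhood, and is an $\mathrm{ANR}$. Thus every point of the separable metric space $M$ has a neighborhood which is an $\mathrm{ANR}$, and (R2) applies to give $M\in\mathrm{ANR}$, as desired.

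Honestly, I do not expect a genuine obstacle here: the statement is a direct corollary of facts quoted verbatim in the preceding paragraph, and the entire content of the argument is the chain $Q\in\mathrm{AR}\Rightarrow Q\in\mathrm{ANR}\overset{(\mathrm{R1})}{\Rightarrow}$ open subsets of $Q$ are $\mathrm{ANR}\Rightarrow$ the charts are $\mathrm{ANR}\overset{(\mathrm{R2})}{\Rightarrow} M\in\mathrm{ANR}$. The closest thing to a subtlety is the tacit appeal to homeomorphism-invariance of the $\mathrm{ANR}$ property when passing from ``open subset of $Q$'' to ``homeomorphic copy of an open subset of $Q$''; this is the one implicit step I would make sure to state rather than leave to the reader.
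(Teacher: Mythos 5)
Your proof is correct and follows exactly the route the paper intends: it proves this fact by combining the statement that $Q$ is an $\mathrm{AR}$ with properties (R1) and (R2), which is precisely your chain (charts are $\mathrm{ANR}$ by (R1) and homeomorphism invariance, then the local-to-global step via (R2)). Your explicit remark on homeomorphism invariance is a reasonable point to make, left tacit in the paper.
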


Let us recall some of the most important  results from the theory of $Q$-manifolds that will be used later in the paper. 

\begin{theorem}\label{thm: X times Q}(\cite[Theorem 44.1]{Chapmanbook})
If $X$ in a $\mathrm{ANR}$ then $X\times Q$
is a $Q$ manifold. 
\end{theorem}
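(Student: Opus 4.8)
The statement is Chapman's (equivalently Edwards') theorem, so a fully self-contained proof lies beyond the elementary retract theory assembled above; my plan is to outline the standard route through Toru\'nczyk's characterization of $Q$-manifolds, under the standing convention (as in Chapman's book) that $X$ is a locally compact, separable, metric ANR. Recall that this characterization asserts: a locally compact, separable, metric ANR $Y$ is a $Q$-manifold if and only if it satisfies the disjoint-cells property (DCP), that is, for every $k\ge 0$, every pair of maps $f,g\colon I^k\to Y$, and every $\varepsilon>0$, there exist maps $f',g'\colon I^k\to Y$ that are $\varepsilon$-close to $f$ and $g$ respectively and have disjoint images. The whole argument then reduces to checking the two hypotheses of this characterization for $Y:=X\times Q$.

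First I would verify the standing hypotheses. Since $X$ and $Q$ are separable and metrizable, $X$ is locally compact, and $Q$ is compact, the product $X\times Q$ is locally compact, separable, and metrizable. To see that $X\times Q$ is an ANR, I would use that $Q$ is an AR (recalled in the excerpt) together with the standard fact that a finite product of ANRs is an ANR; alternatively, one can combine the local criterion (R2) with the open-subspace fact (R1). This places $X\times Q$ within the scope of the characterization.

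The core of the proof is the verification of the DCP for $X\times Q$, where the infinite-dimensionality of $Q$ does all the work. Write $Q=\prod_{i=1}^{\infty}[-1,1]$ with a product metric in which the $i$-th factor carries weight $2^{-i}$, so that any change confined to coordinates of large index produces an arbitrarily small metric change. Given maps $f,g\colon I^k\to X\times Q$ and $\varepsilon>0$, I would fix $N$ so large that altering only the coordinates $i>N$ moves points by less than $\varepsilon$, and then perturb $f$ and $g$ solely in the block of tail coordinates $N+1,\dots,N+(2k+1)$. Viewing these tail coordinates as maps $u,v\colon I^k\to[-1,1]^{2k+1}$, the associated difference map $(x,y)\mapsto u(x)-v(y)$ sends the $2k$-dimensional cube $I^k\times I^k$ into $\mathbb{R}^{2k+1}$, so a routine general-position argument lets me perturb $u$ and $v$ to avoid the value $0$; equivalently, the two $k$-dimensional images $u(I^k)$ and $v(I^k)$ become disjoint inside the $(2k+1)$-cube. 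Since two points of $X\times Q$ that differ in even one coordinate are distinct, the resulting maps $f',g'$ have disjoint images while remaining $\varepsilon$-close to $f$ and $g$. This establishes the DCP, and Toru\'nczyk's theorem then yields that $X\times Q$ is a $Q$-manifold.

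The main obstacle is not the verification above, which is elementary once the right machinery is available, but the characterization theorem itself: Toru\'nczyk's theorem (equivalently, the deep geometric content behind the work of Edwards, Chapman, and West on cell-like maps and the $\mathrm{CE}$ approximation theorem) is a genuinely hard result that cannot be recovered from the basic ANR facts recalled in this section. Accordingly, my plan treats it as the single external input, after which the task collapses to a finite-dimensional general-position argument carried out in the tail coordinates of $Q$.
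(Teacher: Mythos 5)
The paper offers no proof of this statement at all: it is quoted directly from Chapman's book (Theorem 44.1), i.e.\ Edwards' ANR theorem, and is used throughout the paper as a black box, so your proposal can only be compared against the proof in the cited source. Taking Toru\'nczyk's characterization as given, your argument is correct. The space $X\times Q$ is a locally compact, separable, metrizable $\mathrm{ANR}$ (you are right to make local compactness and separability of $X$ explicit: the paper's statement leaves them implicit, yet without them $X\times Q$ cannot be a $Q$-manifold as defined there), and your verification of the disjoint-cells property is the standard one: perturb only a block of $2k+1$ tail coordinates, note that the difference map $(x,y)\mapsto u(x)-v(y)$ takes the $2k$-dimensional cube $I^k\times I^k$ into $\mathbb{R}^{2k+1}$, push it off the origin after a preliminary PL or smooth approximation, translate $v$ accordingly while staying inside the cube, and observe that disagreement in a single coordinate block already forces the images in $X\times Q$ to be disjoint.

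The substantive caution concerns circularity relative to the literature. Chapman's proof (Edwards' argument) is entirely different and predates Toru\'nczyk's theorem: it produces a cell-like map from a $Q$-manifold onto $X$ and then applies the CE approximation theorem --- quoted in this paper as Theorem \ref{thm:ChapmanThm431} --- together with stability (Theorem \ref{thm:ChapmanThm151}). Toru\'nczyk's original 1980 derivation of the characterization runs in the opposite direction: his new contribution is the theorem on CE images of $Q$-manifolds, and to apply it to a locally compact $\mathrm{ANR}$ $X$ with the disjoint-cells property he uses the projection $X\times Q\to X$, whose source is known to be a $Q$-manifold only by Edwards' ANR theorem --- the very statement you are proving. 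So if your ``single external input'' is Toru\'nczyk's paper itself, the argument risks being circular at the level of the literature. The repair is simply to invoke a treatment in which the characterization is established independently of the ANR theorem (self-contained developments exist, for instance in van Mill's book, which the paper already cites); granted such a reference, your route is the cleaner modern one --- a single quotable criterion plus a finite-dimensional general-position argument --- whereas the Chapman--Edwards route uses only machinery the paper quotes elsewhere (CE approximation and stability) at the cost of the genuinely hard construction of a cell-like resolution of $X$.
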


\begin{theorem}
\label{thm:vanMillthm758}(\cite[Theorem 7.5.8]{vanMillbook})
Every compact contractible $Q$-manifold is homeomorphic to $Q.$
\end{theorem}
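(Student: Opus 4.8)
The plan is to deduce the homeomorphism type of such a manifold $M$ from its homotopy type, using the classification machinery for compact $Q$-manifolds. First I would extract the structural consequences of the hypotheses. Since $M$ is a $Q$-manifold, Fact~\ref{fact: Q manifolds are ANR} tells us that $M$ is a compact metrizable $\mathrm{ANR}$; as $M$ is moreover contractible, property (R4) upgrades this to the statement that $M$ is a compact $\mathrm{AR}$. On the other side, $Q$ is an $\mathrm{AR}$ and hence contractible by (R3). In particular both $M$ and $Q$ are contractible, so they are homotopy equivalent (each being homotopy equivalent to a point); fix a homotopy equivalence $h:M\to Q$.

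The core of the argument is Chapman's classification theorem for compact $Q$-manifolds, which I would invoke as a black box: a homotopy equivalence between compact $Q$-manifolds is homotopic to a homeomorphism exactly when its Whitehead torsion vanishes, so two compact $Q$-manifolds are homeomorphic if and only if they are simple homotopy equivalent (see \cite{Chapmanbook}). I would apply this to the equivalence $h:M\to Q$ above, noting that $Q$ is itself a compact $Q$-manifold. Because $M$ is contractible, its fundamental group $\pi_1(M)$ is trivial, and therefore the relevant Whitehead group $\mathrm{Wh}(\pi_1(M))=\mathrm{Wh}(\{1\})$ is itself trivial. Hence the torsion $\tau(h)$ is forced to be zero, and Chapman's theorem produces a homeomorphism $M\cong Q$.

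An alternative route that sidesteps torsion altogether is Toru\'nczyk's topological characterization of the Hilbert cube: a compact metrizable space is homeomorphic to $Q$ if and only if it is an $\mathrm{AR}$ with the disjoint-cells property (see \cite{Sakai2013book}). The first condition is precisely the compact $\mathrm{AR}$ conclusion already obtained for $M$, while the disjoint-cells property would be checked by a local-to-global argument: it holds in $Q$ and hence in every open subset of $Q$, and since $M$ is covered by sets homeomorphic to such open subsets, the standard propagation of general-position properties through an $\mathrm{ANR}$ transfers it to all of $M$. Toru\'nczyk's theorem then yields $M\cong Q$ directly.

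In either version the genuinely deep ingredient --- Chapman's classification theorem or Toru\'nczyk's characterization --- is the only real obstacle and is used without proof; everything else is elementary bookkeeping. The whole force of the two hypotheses is structural: compactness places us inside the classification regime for compact $Q$-manifolds, and contractibility trivializes the single possible obstruction, namely the fundamental group (and with it the Whitehead group in the first approach, or the verification of the disjoint-cells property in the second). I would favour the Chapman route, since there the vanishing of the obstruction is immediate from $\mathrm{Wh}(\{1\})=0$ and requires no geometric estimates.
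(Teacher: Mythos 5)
Your argument is essentially correct, but note first that the paper itself contains no proof of this statement: it is quoted as background, with a citation to van Mill's book. So the relevant comparison is with the classical argument in the cited source, which differs from both of your routes and is worth knowing because it can be assembled entirely from results the paper does state. Since $M$ is compact and contractible, it has trivial shape, so the constant map $c:M\to\{*\}$ is a CE-map onto the one-point $\mathrm{ANR}$; Edwards' theorem (Theorem~\ref{thm:ChapmanThm431}) then makes $c\times id_Q:M\times Q\to\{*\}\times Q\cong Q$ a near homeomorphism, so $M\times Q\cong Q$; and the Stability Theorem (Theorem~\ref{thm:ChapmanThm151}) gives $M\cong M\times Q$, whence $M\cong Q$. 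Your first route, via Chapman's simple-homotopy classification of compact $Q$-manifolds and the vanishing of $\mathrm{Wh}(\{1\})$, is a valid and genuinely different proof; it buys generality (it classifies all compact $Q$-manifolds up to homeomorphism) at the cost of invoking a much deeper theorem than the statement requires. Your second route, via Toru\'nczyk's characterization, is also sound in outline, but the step you gloss over --- that every $Q$-manifold has the disjoint-cells property --- is most cleanly justified not by a local-to-global general-position argument (which is delicate, since the images of the cells need not lie in a single chart) but again by stability: $M\cong M\times Q$, and any product $X\times Q$ has the disjoint-cells property because two maps of cells can be separated by resetting a far-out coordinate of $Q$ to two distinct constants, at arbitrarily small metric cost. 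With that patch, both of your arguments are complete modulo their respective black boxes, and the CE-map proof above remains the most economical one relative to the toolkit the paper sets up.
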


\begin{theorem}
\label{thm:ChapmanThm151}
(Stability Theorem for $Q$-manifolds \cite[Theorem 15.1]{Chapmanbook}) If $M$ is a $Q$-manifold, then $M\times Q$ is homeomorphic to $M.$  Moreover, the projection map of $M\times Q$ to $M$ is a near homeomorphism.\footnote{A map $f:X\rightarrow Y$ between two metrizable spaces is called \textit{near homeomorphism} if for each open cover $\mathcal{U}=\{U_i\}_{i\in I}$ of $Y$ there exists a homeomorphism $\phi:X\rightarrow Y$ such that for every $x\in X,$ there exists $U_i\in\mathcal{U}$ containing both $f(x)$ and $\phi(x)$\label{fnote:near-homeo}. In particular, if there exists a near homeomorphism between two spaces, these spaces are homeomorphic.} 
\end{theorem}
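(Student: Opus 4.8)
The plan is to prove the stronger ``Moreover'' assertion, since a near-homeomorphism between metrizable spaces is in particular a limit of homeomorphisms and hence forces the two spaces to be homeomorphic; so it suffices to show that the projection $\pi:M\times Q\to M$ can be approximated, arbitrarily finely, by homeomorphisms. First I would peel off the Hilbert-cube factor one coordinate at a time. Writing $Q=\prod_{i=1}^{\infty}[-1,1]$, one has the inverse limit presentation $M\times Q=\varprojlim_{n}\big(M\times[-1,1]^{n}\big)$, whose bonding maps are the projections forgetting the last coordinate and whose projection to the $n=0$ term is exactly $\pi$. By Brown's approximation theorem for inverse limits (in its proper version for locally compact separable metric spaces, which applies since $Q$-manifolds are locally compact), $\pi$ is a near-homeomorphism as soon as every bonding map is. Each bonding map is the projection $N\times[-1,1]\to N$ with $N=M\times[-1,1]^{n-1}$, and each such $N$ is again a $Q$-manifold (locally it looks like $U\times[-1,1]^{n-1}$ with $U$ open in $Q$, which is open in $Q\times[-1,1]^{n-1}\cong Q$). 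Thus the whole theorem reduces to the single claim: for every $Q$-manifold $N$ the projection $N\times[-1,1]\to N$ is a near-homeomorphism.

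To attack this claim I would pass to the local models. Since $N$ has an open cover by sets homeomorphic to open subsets $V\subseteq Q$, the first target is the local statement that $V\times[-1,1]\to V$ is a near-homeomorphism. The base case $V=Q$ is transparent: writing $Q\times[-1,1]$ with coordinates $(x_1,x_2,\dots;t)$, the coordinate-insertion maps $(x,t)\mapsto(x_1,\dots,x_m,t,x_{m+1},x_{m+2},\dots)$ are homeomorphisms $Q\times[-1,1]\to Q$ that agree with the projection in the first $m$ coordinates, hence converge to $\pi$ as $m\to\infty$. This already shows $\pi:Q\times[-1,1]\to Q$ is a near-homeomorphism, and feeding it back into the inverse limit reproves $Q\times Q\cong Q$ with near-homeomorphic projection, consistent with the trivial identity $Q\times Q=Q$.

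The real work, and where I expect the main obstacle, is to upgrade the base case to an arbitrary open $V\subseteq Q$ and then to globalize over the cover. The coordinate-insertion trick fails for general $V$ because $V$ is not a product, so one must instead exhaust $V$ by compact pieces with suitably collared complements and invoke the Hilbert-cube $Z$-set technology---chiefly the $Z$-set unknotting theorem---to push approximating homeomorphisms off small closed sets and to promote sufficiently close maps to genuine homeomorphisms; equivalently, one may appeal to the fact that a proper cell-like map between $Q$-manifolds (such as $V\times[-1,1]\to V$, whose point-preimages are arcs) is a near-homeomorphism. Finally the local near-homeomorphisms must be glued: using paracompactness of $N$, a locally finite refinement of the cover, and a carefully chosen decreasing sequence of control functions, one patches the local approximations so that the accumulated error stays within a prescribed open cover of $N$. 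This bookkeeping of control functions, together with the $Z$-set unknotting input, is the technical heart of the argument; once it is in place, Brown's theorem reassembles the finitely-many-coordinate approximations into the desired near-homeomorphism $M\times Q\to M$, which is why the authors are content to cite \cite[Theorem 15.1]{Chapmanbook} rather than reproduce this machinery.
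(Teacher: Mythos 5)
This statement has no proof in the paper: it is Chapman's Stability Theorem, imported verbatim with the citation \cite[Theorem 15.1]{Chapmanbook} and used as a black box (e.g., in the proof of Corollary~\ref{cor:topo-strucK0b}). So there is no internal argument to compare yours against; the only question is whether your sketch stands on its own as a proof of this known theorem.

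As a roadmap it is the classical one, and the parts you actually carry out are correct: the presentation $M\times Q=\varprojlim_n\big(M\times[-1,1]^n\big)$, the reduction via a proper version of Brown's inverse-limit approximation theorem (legitimate here, since the bonding projections have compact fiber $[-1,1]$ and hence are proper) to the single claim that $N\times[-1,1]\to N$ is a near homeomorphism for every $Q$-manifold $N$, the observation that each $M\times[-1,1]^{n}$ is again a $Q$-manifold, and the coordinate-insertion homeomorphisms converging uniformly to the projection in the model case $N=Q$. The genuine gap is that the entire mathematical content of the theorem lies precisely in the two steps you leave as black boxes: (i) the case of an arbitrary open $V\subseteq Q$, where coordinate insertion fails, and (ii) the patching of local approximations over a locally finite cover of $N$ with open-cover control. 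Invoking ``$Z$-set unknotting technology'' and ``bookkeeping of control functions'' names the tools but proves nothing; developing them is what occupies the many sections of \cite{Chapmanbook} preceding Theorem 15.1. Worse, the shortcut you offer as ``equivalent'' --- that $V\times[-1,1]\to V$ is a proper cell-like map between $Q$-manifolds and hence a near homeomorphism --- is circular in this development: the CE-approximation theorem (cf.\ Theorem~\ref{thm:ChapmanThm431}, Chapman's Theorem 43.1) is proved \emph{using} the stability theorem and $Z$-set unknotting, so it cannot be used to establish stability. As it stands, your proposal is a correct outline of Chapman's proof strategy, not a proof; this is exactly why the paper cites the result rather than proving it.
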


A continuous surjective  map $f:X\to Y$ between topological spaces is called \textit{proper} if $f^{-1}(K)$ is compact for every compact set $K\subset Y$. 
A proper map $f:X\to Y$ is called \textit{cell-like} (or \textit{CE-map})  if every fiber $f^{-1}(y)$ has trivial shape. Let us recall that a compact metric space $X$ has \textit{trivial shape}  if for every embedding $f:X\to Z$ where $Z$ is a $\mathrm{ANR}$, the image $f(X)$ is contractible in any of its neighborhoods. In particular, if $X$ is contractible, then it has trivial shape. The reader can find more information on this topic in \cite[Subsection 7.1]{vanMillbook}.

The importance of CE-maps is that they often guarantee the existence of a homeomorphism between $Q$-manifolds, as the following Theorem from R. D. Edwards states.

\begin{theorem}
\label{thm:ChapmanThm431}
(\cite[Theorem 43.1]{Chapmanbook}) If $M$ is a $Q$-manifold, $X$ is an $\mathrm{ANR}$ and $f:M\rightarrow X$ is a \textit{CE-map}, then the map  
$$f\times id_{Q}: M\times Q\rightarrow X\times Q$$ 
is a near homeomorphism.\textsuperscript{\ref{fnote:near-homeo}} Here, $id_Q$ denotes the identity map on $Q$.
\end{theorem}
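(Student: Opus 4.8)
The plan is to recognize the statement as Edwards' cell-like approximation theorem for $Q$-manifolds and to organize the argument around \emph{Bing's shrinking criterion}, which converts the search for a near homeomorphism into the combinatorial-geometric problem of shrinking the point-inverses of the map. Since the theorem is attributed to Chapman's book, my aim is to reconstruct the \emph{shape} of the proof rather than to reproduce the heavy $Q$-manifold machinery it ultimately rests on.

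First I would arrange that both source and target are $Q$-manifolds. As $X$ is an $\mathrm{ANR}$, Theorem~\ref{thm: X times Q} gives that $X\times Q$ is a $Q$-manifold, and the same reasoning (together with Fact~\ref{fact: Q manifolds are ANR} and Theorem~\ref{thm: X times Q}) shows $M\times Q$ is a $Q$-manifold. Writing $g:=f\times \mathrm{id}_Q$, I would then verify that $g$ is again a CE-map: it is proper because $f$ is proper and $Q$ is compact, and its fibers $g^{-1}(x,q)=f^{-1}(x)\times\{q\}$ are homeomorphic to the cell-like fibers of $f$, hence have trivial shape. This reduces the theorem to the core assertion that \emph{a CE-map between $Q$-manifolds is a near homeomorphism}; the point of multiplying by $Q$ is exactly to supply the extra Hilbert-cube coordinate in which the shrinking is performed.

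Next I would invoke Bing's shrinking criterion: for a proper surjection between locally compact $\mathrm{ANR}$'s, being a near homeomorphism is equivalent to the following shrinkability condition. For every open cover $\mathcal U$ of $X\times Q$ and every open cover $\mathcal V$ of $M\times Q$, there exists a homeomorphism $h$ of $M\times Q$ such that $g\circ h$ is $\mathcal U$-close to $g$ while each fiber $g^{-1}(y)$ has $\mathcal V$-small image under $h$. The task thus becomes the construction, for prescribed meshes, of homeomorphisms that simultaneously shrink the cell-like fibers and barely disturb $g$. \emph{This construction is the main obstacle}, and it is where the full strength of $Q$-manifold theory enters: one uses that CE-maps onto $\mathrm{ANR}$'s are fine homotopy equivalences (so each fiber contracts inside arbitrarily small neighborhoods), together with the strong general-position and absorption properties of $Q$-manifolds (the disjoint-disks property and homeomorphisms supported in small open sets that carry prescribed compacta into prescribed small neighborhoods). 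Exploiting the compactness of the $Q$-coordinate, one pushes the cell-like fibers into $\mathcal V$-small sets by an inductive process while keeping the displacement of $g$ within $\mathcal U$. I would cite Chapman's book for this deep step rather than reproduce it; once the shrinking criterion is verified, Bing's theorem produces the desired near homeomorphism, and in particular $M\times Q$ is homeomorphic to $X\times Q$.
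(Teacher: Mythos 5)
This statement is not proved in the paper at all: it is quoted verbatim from Chapman's book as an external black box (it is R.~D.~Edwards' theorem), so there is no internal argument to compare your sketch against and any real proof must import material from outside the paper. The routine parts of your sketch are correct: $f\times id_Q$ is proper (product of proper maps, $Q$ compact) and its fibers $f^{-1}(x)\times\{q\}$ have trivial shape, so it is a CE-map, and Bing's shrinking criterion is indeed the standard framework in which such approximation theorems are organized.

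The genuine gap is that your reduction is circular relative to the very source you (and the paper) cite, and it replaces the theorem with a strictly stronger one. You make $X\times Q$ a $Q$-manifold by invoking Theorem~\ref{thm: X times Q}, which is Theorem 44.1 of Chapman's book; but in that book Theorem 44.1 is \emph{deduced from} Theorem 43.1, the statement you are proving (one realizes the locally compact $\mathrm{ANR}$ $X$ as the CE image of a $Q$-manifold built from an inverse sequence of polyhedral mapping cylinders and then applies 43.1). The entire reason the statement carries the factor $id_Q$ is that this version can be proved \emph{without} knowing beforehand that $X\times Q$ is a $Q$-manifold; assuming that fact inverts the logical order. (Your use of the same reasoning for $M\times Q$ is harmless, since that also follows from the Stability Theorem~\ref{thm:ChapmanThm151}, which is independent of 43.1; the problem is the target.) Worse, the ``core assertion'' you then defer to Chapman's book --- that an arbitrary CE-map between $Q$-manifolds is a near homeomorphism --- is the full CE Approximation Theorem, which is strictly stronger than Theorem 43.1, is not proved in that book (it is the subject of separate, harder work of Chapman), and whose known proofs themselves rest on 43.1 and its consequences. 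So the deep step you propose to cite is not available where you propose to cite it, and making your route non-circular would require an independent proof that $X\times Q$ is a $Q$-manifold (e.g.\ via Toru\'{n}czyk's characterization) together with the full approximation theorem --- far heavier machinery than the statement itself, none of which your sketch reconstructs, while the shrinking construction that constitutes the actual content remains entirely deferred.
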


We refer the reader to \cite{Chapmanbook, vanMillbook, Sakai2013book} for a deeper understanding of the theory of $Q$-manifolds. However, in Section~\ref{sec:K0n Hilbert cube} we still require to introduce some other notions from this theory that have not been mentioned here. 

\subsection{$G$-spaces}\label{subsec:G-spaces}

For the proof of Theorem 1,  we will use some important results from the theory of $G$-spaces.  We refer the reader to \cite{Bredon} for a deeper understanding of this topic. However, in order to make our proof as clear as possible, let us recall  some of the basic notions that we will use. 

By a $G$\textit{-space}, we mean a topological space $X$ equipped with a continuous action $\rho:G\times X\rightarrow X$ of a topological group $G$ on $X.$ 
As usual,  $hx$ denotes the image under the action of the pair $(h,x)$, i.e., $hx:=\rho(h,x)$. The set $G(x):=\{hx\in X: h\in G\}$ is called the \textit{orbit} of $x$, and the set of all orbits in $X$, equipped with the quotient topology, is the \textit{orbit space} and it is  denoted by $X/G$. 

A map  $f:X\to Y$ between $G$-spaces is called $G$-\textit{equivariant} (or simply \textit{equivariant}) if it commutes with the action. Namely, if $f(hx)=hf(x)$ for every $x\in X$ and $h\in G$.
 If in addition $f$ is continuous, then we say that $f$ is a $G$-\textit{map}.  
Similarly, if $f$ happens to be a retraction which is also equivariant, then we call it a $G$-\textit{retraction}.

Given a $G$-space $X$, for every $x\in G$  we denote by $G_x$ the \textit{stabilizer} (also called \textit{isotropy group}) of $x$, i.e., 
$$G_x:=\{h\in G : hx=x\}.$$   
An action of a group $G$ on a topological space $X$ is said to be \textit{free}, if $G_x$ is the trivial group for every $x\in X$. This notion leads us to the following definition. 

\begin{definition}\label{def:base free action}
A based-free $G$-space is a $G$-space with a unique point $x_0\in X$, such that $G_{x_0}=G$ and $G_x$ is trivial for every $x\in X\setminus\{x_0\}$. Namely, $x_0$ is a fixed point and the action of $G$ on $X\setminus\{x_0\}$ is free. 
\end{definition}

Every finite group can define a based-free action on the Hilbert cube. Indeed, for a finite group $G$, consider its cone 
$$C(G):=G\times[0,1]/G\times \{0\}.$$ 
Notice that  $C(G)$ is homeomorphic to a contractible compact non trivial polyhedron and therefore it is an $\mathrm{AR}$\footnote{We refer the reader to \cite[Page 117]{vanMillbook} for the definition of a polyhedron and more information about the $\mathrm{AR}$-property on these kinds of spaces.}. 
Thus,  the countable product $\prod_{n\in\mathbb N}C(G)$ is homeomorphic to the Hilbert cube (see, e.g. \cite[Corollary 8.1.2]{vanMillbook}). 
Furthermore, the group $G$ acts continuously on  $C(G)$ by $(h_1, [h_2,t])\to[h_1h_2, t]$ and hence we can define an action of $G$ on the Hilbert cube $\prod_{n\in\mathbb N}C(G)$ by means of the following rule
$$\big(h,([h_n, t_n])_{n\in\mathbb N}\big)\to ([hh_n, t_n])_{n\in\mathbb N}.$$
It is not difficult to prove that this action is based-free and the only fixed point is the point with all its coordinates equal to the vertex $\theta:=[h,0]$. We refer the reader to \cite[Section 3]{Berstein-West} for more details on this construction\footnote{Actually, this construction works for any compact Lie group.}. 
These kinds of actions are called \textit{standard based-free $G$-action on a Hilbert cube}.

\begin{remark}\label{rem: standard z2 action on Q}
Notice that if $G=\mathbb Z_2$, then $C(\mathbb Z_2)$ is homeomorphic with the interval $[-1,1]$ and therefore  the standard  based-free $\mathbb Z_2$-action on $Q$ corresponds  precisely with the one defined by $\big(h, (x_n)_{n\in\mathbb N}\big)\to (hx_n)_{n\in\mathbb N}$, where $h\in \mathbb Z_2:=\{-1,1\}.$
\end{remark}

After this construction, a based-free action $\rho:G\times Q\to Q$ of a finite group on the Hilbert cube is called \textit{standard} if it is conjugate with the standard based-free $G$-action on $Q$. Namely, if there exists an equivariant homeomorphism $\varphi:(Q, \rho)\to (Q, \sigma_G)$, where $\sigma_G$ denotes the  standard based-free $G$-action on $Q$. 

The easiest way to detect if a based-free action on $Q$ is standard is by means of the following theorem of J. West and R. Y. T. Wong.

\begin{theorem}
\label{thm:WestWongThm1}(\cite[Theorem 1]{WestWong79}).
If $G$ is finite, a based-free action  $\rho:G\times Q\rightarrow Q$ is standard if and only if the orbit space  $Q/G$ is an $\mathrm{AR}$.
\end{theorem}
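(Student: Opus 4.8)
The plan is to prove the two implications separately, since the forward direction reduces to an explicit computation on the standard model while the converse carries the genuine infinite-dimensional content.

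For the forward implication, suppose the action is standard, i.e.\ conjugate to the model $\sigma_G$ on $Q=\prod_{n\in\mathbb N}C(G)$. Conjugate actions have homeomorphic orbit spaces, so it suffices to show that the orbit space of the diagonal action on $\prod_{n\in\mathbb N}C(G)$ is an $\mathrm{AR}$. I would do this by showing it is a compact, contractible $Q$-manifold and then applying Theorem~\ref{thm:vanMillthm758} to conclude it is homeomorphic to $Q$, and hence an $\mathrm{AR}$. Compactness is immediate; contractibility descends from the radial contraction of each cone factor toward its vertex $\theta$, which is $G$-equivariant and therefore passes to the quotient. The only delicate point is the $Q$-manifold property: away from the image of the fixed point the orbit map is a finite regular covering of the $Q$-manifold obtained by deleting the fixed point, so the base inherits a local $Q$-manifold structure there, and near the distinguished point one exploits the cone coordinates together with property (R2) and Fact~\ref{fact: Q manifolds are ANR}.

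The converse is the heart of the theorem, so the hard work goes there. Assuming $Q/G$ is an $\mathrm{AR}$, the same covering argument as above shows it is a $Q$-manifold, and being compact and contractible (as a retract of $Q$) Theorem~\ref{thm:vanMillthm758} gives $Q/G\cong Q$. The remaining, and crucial, task is to promote a mere homeomorphism of orbit spaces to an \emph{equivariant} homeomorphism of the total spaces, thereby matching the given action with $\sigma_G$. Restricting to the complements of the two fixed points, one has free actions of the finite group $G$ on $Q$-manifolds, and a natural first attempt is to classify these as principal $G$-bundles (regular coverings) over the common base $(Q/G)\setminus\{y_0\}$. The obstacle is that such a classification only controls the situation up to homotopy, whereas a true equivariant homeomorphism is required, and moreover the bundle data must be shown to be forced by the $\mathrm{AR}$ hypothesis rather than chosen. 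Overcoming this is where an equivariant version of the $Q$-manifold machinery is needed: an equivariant analogue of Edwards' $\mathrm{CE}$-approximation theorem (the equivariant counterpart of Theorem~\ref{thm:ChapmanThm431}) to realize the comparison by a homeomorphism, together with a careful engulfing argument near the unique fixed point to make the two cone-like local structures agree. I expect this extension across the fixed point, and the passage from homotopical to equivariant-topological control, to be the main obstacle; it is precisely this content that makes the statement deep and is the reason it is invoked here from \cite{WestWong79} rather than reproved.
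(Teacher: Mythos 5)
First, a structural observation: the paper does not prove this statement at all. It is imported verbatim from \cite[Theorem 1]{WestWong79} and used as a black box (through Remark~\ref{rem: prueba Main 1}) to convert $\mathbb Z_2$-contractibility of $\mathcal K^n_0$ into the conjugacy asserted in the paper's main theorem. So your proposal has to stand on its own, and it does not: the converse implication --- if $Q/G$ is an $\mathrm{AR}$ then the action is standard --- is never proved. You correctly identify it as the heart of the matter, reduce it to upgrading a homeomorphism of orbit spaces to an \emph{equivariant} homeomorphism of the total spaces, and then only list tools one might hope to use (an equivariant CE-approximation theorem, engulfing near the fixed point), ending with the admission that this content is ``the reason it is invoked from \cite{WestWong79} rather than reproved.'' Naming the obstruction is not overcoming it; as written, this direction is a citation, not a proof, so the proposal fails exactly where the theorem is deep.

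The forward implication also has a concrete hole, located precisely where all the difficulty of this subject concentrates: the image of the fixed point. Your covering argument is fine on the free part (a finite group acting freely on a Hausdorff space acts properly discontinuously, so the orbit map over $Q\setminus\{x_0\}$ is a finite covering and the quotient is a $Q$-manifold there), but at the singular orbit neither (R2) nor Fact~\ref{fact: Q manifolds are ANR} gives you anything: (R2) upgrades local ANR-ness to global ANR-ness, so you would first need to know the orbit space is an $\mathrm{ANR}$ near the cone point, which is exactly what must be proved. No soft argument can settle this: if ANR-ness of $Q/G$ at the fixed orbit were automatic, every based-free involution on $Q$ would have $\mathrm{AR}$ orbit space and Anderson's problem would be solved by West--Wong's theorem itself. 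There is, however, a short correct route using only tools the paper already quotes: the standard model $\prod_{n\in\mathbb N}C(G)$ is an $\mathrm{AR}$, it is a based-free $G$-space, and it is $G$-contractible via the coordinatewise cone contraction $([h_n,t_n])_{n}\mapsto([h_n,(1-s)t_n])_{n}$, which is equivariant because the action does not touch the cone parameter; hence it is a $G$-$\mathrm{AR}$ by Theorem~\ref{the: Antonyan G contactible}, and its orbit space is an $\mathrm{AR}$ by \cite[Theorem 8]{Antonyan1990}, the same result invoked in Remark~\ref{rem: prueba Main 1}. Since conjugate actions have homeomorphic orbit spaces, every standard action has $\mathrm{AR}$ orbit space. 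With that repair the easy direction is complete, but the converse remains unproved in your proposal.
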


To avoid making this section any longer, all other basic concepts and results  needed in specific spots of the paper will be introduced  as they are needed. 

\section{Homeomorphism type of $\mathcal{K}_0^n$} \label{sec:K0n Hilbert cube}

From now on,  $\mathcal{K}^n$ and all its subspaces are assumed to be metric spaces with respect to the Attouch-Wets metric $d_{AW}$ defined  in equation (\ref{eq:dAW}).

The next lemmas will be used frequently throughout the work.
\begin{lemma}
\label{lem:homotopyK0}
Let $r:(0,1]\rightarrow[0,\infty)$ be a continuous map such that  $r(1)=0$,  $\lim\limits_{t\to 0^+} r(t)=\infty$  and $r(t)>0$ for all
$t\in(0,1)$. Then the map $F:\mathcal{K}_0^n\times[0,1]\rightarrow\mathcal{K}_0^n$ defined by
\begin{align*}
F(K,t):=\begin{cases}
K&\text{if } t=0, \\
K\cap r(t)\mathbb{B}&\text{if }t\in(0,1],
\end{cases}
\end{align*}
is continuous. 
\end{lemma}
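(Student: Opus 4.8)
The plan is to verify continuity of $F$ separately at points $(K_0,t_0)$ with $t_0\in(0,1]$ and at points of the form $(K_0,0)$, since the geometry of the two regimes is quite different. At points with $t_0\in(0,1]$ the images $F(K,t)$ stay uniformly bounded and compact, so it is convenient to estimate the Attouch--Wets distance through the Hausdorff distance, using the elementary inequality $d_{AW}(A,B)\le d_H(A,B)$ (immediate from~(\ref{eq:dAW}) and the third formula for $d_H$, since $d_{AW}$ only caps and truncates the supremum defining $d_H$). At points $(K_0,0)$ the images need not be compact, so there I will work directly with the expression for $d_{AW}$ in Lemma~\ref{lem:car-dAW}(4). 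The two recurring tools will be Lemma~\ref{lem:car-dAW}(3), which rewrites the Hausdorff distance of two truncations as a supremum of distance functions over a ball, and Lemma~\ref{lem:car-dAW}(4), which lets me pass between $d_{AW}(K,K_0)$ and the truncated Hausdorff distances.

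For the case $t_0\in(0,1)$, write $s=r(t)$ and $s_0=r(t_0)>0$, fix an integer $j>s_0$, and restrict to $(K,t)$ with $|s-s_0|<1$, so that $0<s<j$. I will split, by the triangle inequality,
\[
d_H(K\cap s\mathbb B,\,K_0\cap s_0\mathbb B)\le d_H(K\cap s\mathbb B,\,K_0\cap s\mathbb B)+d_H(K_0\cap s\mathbb B,\,K_0\cap s_0\mathbb B).
\]
For the second (radius-varying) term I will prove the clean bound $d_H(K_0\cap s\mathbb B,K_0\cap s_0\mathbb B)\le|s-s_0|$: assuming $s\le s_0$, every $y\in K_0\cap s_0\mathbb B$ with $\|y\|>s$ can be radially retracted to $y'=\tfrac{s}{\|y\|}y$, which lies in $K_0$ because $0\in K_0$ and $K_0$ is convex, has $\|y'\|=s$, and satisfies $\|y-y'\|=\|y\|-s\le s_0-s$. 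For the first ($K$-varying) term, Lemma~\ref{lem:car-dAW}(3) gives $d_H(K\cap s\mathbb B,K_0\cap s\mathbb B)=\sup_{x\in s\mathbb B}|d(x,K)-d(x,K_0)|\le\sup_{x\in j\mathbb B}|d(x,K)-d(x,K_0)|=d_H(K\cap j\mathbb B,K_0\cap j\mathbb B)$, and this last quantity tends to $0$ with $d_{AW}(K,K_0)$: since $\min\{1/j,d_H(K\cap j\mathbb B,K_0\cap j\mathbb B)\}$ is one of the terms in Lemma~\ref{lem:car-dAW}(4), if $d_{AW}(K,K_0)<\eta\le 1/j$ then necessarily $d_H(K\cap j\mathbb B,K_0\cap j\mathbb B)<\eta$. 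Combining both summands and using continuity of $r$ at $t_0$, I can make each $<\varepsilon/2$, whence $d_{AW}(F(K,t),F(K_0,t_0))\le d_H(\cdots)<\varepsilon$. The boundary case $t_0=1$, where $r(t_0)=0$, is even simpler: $F(K,t)\subseteq r(t)\mathbb B$ forces $d_{AW}(F(K,t),\{0\})\le d_H(F(K,t),\{0\})\le r(t)\to0$.

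For the case $t_0=0$, fix $\varepsilon>0$ and an integer $N$ with $1/N<\varepsilon$. Because $\lim_{t\to0^+}r(t)=\infty$, there is $\delta_0>0$ with $r(t)\ge N$ whenever $0<t<\delta_0$. The key observation is that then $j\mathbb B\subseteq r(t)\mathbb B$ for every $j\le N$, so the truncation is \emph{invisible} at all scales up to $N$:
\[
F(K,t)\cap j\mathbb B=K\cap r(t)\mathbb B\cap j\mathbb B=K\cap j\mathbb B,\qquad j\le N,\ 0\le t<\delta_0.
\]
Hence for such $(K,t)$ and $j\le N$ one has $d_H(F(K,t)\cap j\mathbb B,K_0\cap j\mathbb B)=d_H(K\cap j\mathbb B,K_0\cap j\mathbb B)$, which is $<1/N$ once $d_{AW}(K,K_0)<1/N$ (again by the forcing argument from Lemma~\ref{lem:car-dAW}(4), valid since $1/N\le 1/j$). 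Feeding this into the formula of Lemma~\ref{lem:car-dAW}(4), each term $\min\{1/j,d_H(F(K,t)\cap j\mathbb B,K_0\cap j\mathbb B)\}$ is $<1/N$ for $j\le N$ and is $\le 1/j\le 1/(N+1)$ for $j>N$; therefore $d_{AW}(F(K,t),K_0)\le 1/N<\varepsilon$ on the neighbourhood $\{K:d_{AW}(K,K_0)<1/N\}\times[0,\delta_0)$, which proves continuity at $(K_0,0)$.

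The step I expect to be the crux is this last case $t_0=0$: there $F(K,t)=K\cap r(t)\mathbb B$ must converge to $K$ in a metric that is sensitive to behaviour arbitrarily far from the origin, and the argument succeeds precisely because $d_{AW}$ resolves only finitely many scales up to any prescribed accuracy, while on each of those finitely many scales the ball $r(t)\mathbb B$ has already grown past them. The only genuinely geometric input anywhere is the radial-retraction bound $d_H(K_0\cap s\mathbb B,K_0\cap s_0\mathbb B)\le|s-s_0|$, where convexity and $0\in K_0$ are used; it is worth emphasizing that the ordinary intersection of two varying closed convex sets is not continuous, and that it is Lemma~\ref{lem:car-dAW}(3) which silently dissolves the usual lower-semicontinuity obstruction in the $K$-varying term.
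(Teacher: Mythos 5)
Your proof is correct, and its skeleton is the same as the paper's: split into $t_0\in(0,1]$ and $t_0=0$; for positive $t_0$, use the triangle inequality to separate a radius-varying term from a set-varying term, bounding the former by the radial retraction $y\mapsto\frac{s}{\|y\|}y$ (which uses exactly $0\in K_0$ and convexity, as you note); for $t_0=0$, use that the truncation by $r(t)\mathbb B$ is invisible at every scale $j\le N$ once $r(t)\ge N$, which is in substance the paper's argument via Lemma~\ref{lem:car-dAW}(5). The genuine difference is in the set-varying term. The paper deduces $d_H\big(A\cap r(t)\mathbb B,K\cap r(t)\mathbb B\big)\le\varepsilon/2$ from $d_H\big(A\cap j\mathbb B,K\cap j\mathbb B\big)<\varepsilon/4$ by an explicit geometric computation (points $x\in K\cap j\mathbb B$ with $\|x\|>r(t)$ are pulled back radially to $\frac{r(t)}{\|x\|}x\in K\cap r(t)\mathbb B$, at the cost of a factor of two). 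You bypass this entirely by applying Lemma~\ref{lem:car-dAW}(3) at the two radii $s\le j$ to obtain the monotonicity $d_H\big(K\cap s\mathbb B,K_0\cap s\mathbb B\big)\le d_H\big(K\cap j\mathbb B,K_0\cap j\mathbb B\big)$; combined with the crude bound $d_{AW}\le d_H$ (replacing the paper's return trip through Lemma~\ref{lem:car-dAW}(5)), this makes your positive-$t_0$ case noticeably shorter while resting on the same Lemma~\ref{lem:car-dAW}. Your separate one-line treatment of $t_0=1$, where $F(K,t)\subseteq r(t)\mathbb B$ gives $d_H\big(F(K,t),\{0\}\big)\le r(t)\to0$ uniformly in $K$, is also simpler than the paper's device of folding $s=1$ into the general case. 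Two quantifiers need trivial repair: from ``$j>s_0$ and $|s-s_0|<1$'' you cannot conclude $s<j$ (take $s_0=2.5$, $j=3$, $s=3.4$), so choose $j>s_0+1$ or shrink the allowed $|s-s_0|$ to $\min\{1,j-s_0\}$; and $|s-s_0|<1$ does not force $s>0$, though your estimates are insensitive to $s=0$, so neither issue affects the argument.
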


\begin{proof}
Let $(A,s)\in \mathcal K^n_0\times [0,1]$ be fixed.
Let us suppose that $s>0$, and let $j_0\geq1$ be an integer such that $r(s)<j_0$. 
Let $\varepsilon>0$ be such that 
$\frac{1}{j+1}<\varepsilon\leq\frac{1}{j}$, for some $j>j_0+1.$  By the continuity of $r$, we can pick $\delta\in(0,s)$ such that
$|r(t)-r(s)|<\frac{\varepsilon}{4},$ if $|t-s|<\delta.$ 
Now, let $(K,t)\in \mathcal K^n_0\times[0,1]$ be any pair satisfying $d_{AW}(K,A)<\frac{\varepsilon}{4}$ and $|t-s|<\delta$.
  Observe that, by our choice of $\delta$, $r(t)<j$ and $t>0$. Therefore,
\begin{align}
\label{eq:distAj}&F_s(A)=A\cap r(s)\mathbb{B}=A\cap r(s)\mathbb{B}\cap j\mathbb{B}\subseteq A\cap j\mathbb{B}\\
\label{eq:distKj}&F_t(K)=K\cap r(t)\mathbb{B}=K\cap r(t)\mathbb{B}\cap j\mathbb{B}\subseteq K\cap j\mathbb{B}.
\end{align}
Recall that $F_s(A)$ and $F_t(K)$ stand for $F(A,s)$ and $F(K,t)$, respectively. 
Also, since $d_{AW}(K,A)<\frac{\varepsilon}{4}<\frac{1}{j}$, we can use Lemma~\ref{lem:car-dAW}-(4) to infer that
\begin{equation}
\label{eq:distaux1}
d_H(K\cap j\mathbb{B},A\cap j\mathbb{B})<\frac{\varepsilon}{4}.
\end{equation} 
Now, consider $b\in A\cap r(s)\mathbb{B}.$ If $b\in r(t)\mathbb{B},$ then $d\big(b,A\cap r(t)\mathbb{B}\big)<\frac{\varepsilon}{4}.$ Otherwise, $\|b\|>r(t)$ and 
$\frac{r(t)}{\|b\|}b \in A\cap r(t)\mathbb B$. Hence,
$$
d\big(b,A\cap r(t)\mathbb{B}\big)\leq\left\Vert b-\frac{r(t)}{\|b\|}b\right\Vert\leq r(s)-r(t)<\frac{\varepsilon}{4}.
$$ 
Similarly, $d(b',A\cap r(s)\mathbb{B})<\frac{\varepsilon}{4}$ for all $b'\in A\cap r(t)\mathbb{B}.$ Thus,
$d_H\big(F_s(A), F_t(A)\big)\leq\frac{\varepsilon}{4}$
which, by the triangle inequality, yields to
\begin{align}
\label{eq:triangle-ineq}
d_H\big(F_s(A), F_t(K)\big)&
\leq \frac{\varepsilon}{4}+d_H\big(F_t(A), F_t(K)\big).
\end{align}
We claim that $d_H\big(F_t(A), F_t(K)\big)\leq\frac{\varepsilon}{2}.$ Indeed, let $a\in A\cap r(t)\mathbb{B}\subset A\cap j\mathbb B$. By (\ref{eq:distaux1}), there is $x\in K\cap j\mathbb{B}$ such that $\|x-a\|<\frac{\varepsilon}{4}.$  If $x\in r(t)\mathbb{B},$ then $d(a,K\cap r(t)\mathbb{B})<\frac{\varepsilon}{4}.$ Otherwise, $\|x\|>r(t)$ and 
$\frac{r(t)}{\|x\|}x\in K$.
In this case,
$0<\|x\|-r(t)<\frac{\varepsilon}{4}$ and
\begin{equation*}
\left\Vert a-\frac{r(t)}{\|x\|}x\right\Vert\leq \left\Vert a-\frac{r(t)}{\|x\|}a\right\Vert+\left\Vert \frac{r(t)}{\|x\|}a-\frac{r(t)}{\|x\|}x\right\Vert
\leq\frac{\varepsilon}{4}\left(\frac{\|a\|}{\|x\|}+\frac{r(t)}{\|x\|}\right)
<\frac{\varepsilon}{2}.
\end{equation*}

Thus $d(a,K\cap r(t)\mathbb{B})<\frac{\varepsilon}{2}.$ In a similar way, we can prove that $d(a',A\cap r(t)\mathbb{B})<\frac{\varepsilon}{2}$ for all $a'\in K\cap r(t)\mathbb{B}.$ Therefore  $d_H(F_t(A), F_t(K))\leq\frac{\varepsilon}{2}.$ This, in combination with  (\ref{eq:distAj}), (\ref{eq:distKj}) and  (\ref{eq:triangle-ineq}), implies that 
$
d_H(F_s(A)\cap j\mathbb{B},F_t(K)\cap j\mathbb{B})<\varepsilon.
$
Hence, by  Lemma \ref{lem:car-dAW}-(5), 
$d_{AW}\big(F_s(A), F_t(K)\big)<\varepsilon,$ 
and therefore $F$ is continuous at $(A,s)$ whenever $s\neq 0.$

Now, let us consider the case when $s=0$. Let $\varepsilon>0$ and assume  that $\varepsilon\in\left(\frac{1}{j+1},\frac{1}{j}\right]$ for some integer $j\geq1.$ By our hypothesis, there is 
$\delta\in (0,\varepsilon)$ such that $r(t)>j$ for all 
$t\in(0,\delta)$. Let $(K,t)\in \mathcal K^n_0\times[0,\delta)$  be such that $d_{AW}(A,K)<\varepsilon$. Clearly, if $t=0$, $d_{AW}\big(F(A,0), F(K,t)\big)=d_{AW}(A, K)<\varepsilon$.
On the other hand, if $t>0$, then
$K\cap j\mathbb{B}=F(K,t)\cap j\mathbb{B}$. 
This, in combination with  Lemma~\ref{lem:car-dAW}-(5) 
and the fact that $d_{AW}(A, K)<\varepsilon$, 
implies that
\begin{align*}
d_H\big(F(A, 0)\cap j\mathbb{B},F(K, t)\cap j\mathbb{B}\big)=  d_H(A\cap j\mathbb{B},K\cap  j\mathbb{B})<\varepsilon.  
\end{align*}
Using Lemma~\ref{lem:car-dAW}-(5) again, we conclude  that $d_{AW}\big(F(A,0),F(K,t)\big)<\varepsilon,$ which proves the continuity of $F$  at $(A,0)$.
\end{proof}

\begin{lemma}\label{lem:homotopy H}
Let $H:\mathcal{K}_0^n\times[0,1]\rightarrow\mathcal{K}_0^n$ be defined as $H(K,t)=K+t\mathbb{B},$ for $K\in\mathcal{K}_0^n$ and $t\in[0,1].$
Then $H$ is continuous.
\end{lemma}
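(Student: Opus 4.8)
The plan is to reduce continuity of $H$ to a Hausdorff-metric estimate on bounded pieces, and then control the two arguments separately. First note that for $K\in\mathcal K_0^n$ and $t\in[0,1]$ the set $K+t\mathbb B$ is closed, convex and contains $0$, so $H$ indeed takes values in $\mathcal K_0^n$. Fix a point $(A,s)$ and $\varepsilon>0$, and let $j\geq 1$ be the integer with $\varepsilon\in(\tfrac{1}{j+1},\tfrac1j]$. By Lemma~\ref{lem:car-dAW}-(5) it suffices to find $\delta>0$ so that $d_{AW}(K,A)<\delta$ and $|t-s|<\delta$ force
$$d_H\big((K+t\mathbb B)\cap j\mathbb B,\ (A+s\mathbb B)\cap j\mathbb B\big)<\varepsilon.$$
I would split this via the triangle inequality into a term measuring the variation in the ``$t$-direction'' (with $K$ replaced by $A$) and a term measuring the variation in the ``$K$-direction'' (at the fixed parameter $t$).

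The $t$-direction is the easy half. Since $0\in A$, for $s\le t$ one has $A+s\mathbb B\subseteq A+t\mathbb B\subseteq (A+s\mathbb B)+(t-s)\mathbb B$, so $d_H(A+t\mathbb B,A+s\mathbb B)\le|t-s|$. As both sets lie in $\mathcal K_0^n$, Lemma~\ref{lem:car-dAW}-(3) yields $d_H\big((A+t\mathbb B)\cap j\mathbb B,(A+s\mathbb B)\cap j\mathbb B\big)=\sup_{x\in j\mathbb B}|d(x,A+t\mathbb B)-d(x,A+s\mathbb B)|\le d_H(A+t\mathbb B,A+s\mathbb B)\le|t-s|$. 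Hence imposing $|t-s|<\varepsilon/2$ keeps this term below $\varepsilon/2$.

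The $K$-direction is where the main obstacle lies: a small $d_{AW}(K,A)$ gives no control on the \emph{global} Hausdorff distance $d_H(K,A)$, so one cannot simply invoke the non-expansiveness $d_H(K+t\mathbb B,A+t\mathbb B)\le d_H(K,A)$. The remedy is to localize. I would first observe that every $y\in(K+t\mathbb B)\cap j\mathbb B$ can be written $y=k+tb$ with $k\in K$, $\|b\|\le1$, whence $\|k\|\le j+1$; thus only $K\cap(j+1)\mathbb B$ is relevant. Writing $\mu:=d_H\big(K\cap(j+1)\mathbb B,A\cap(j+1)\mathbb B\big)$ and choosing $a\in A\cap(j+1)\mathbb B$ with $\|k-a\|\le\mu$, set $w:=a+tb\in A+t\mathbb B$, so $\|y-w\|=\|k-a\|\le\mu$. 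If $\|w\|>j$, its radial retraction $\tfrac{j}{\|w\|}w$ still lies in $A+t\mathbb B$ (because $0\in A+t\mathbb B$ and this set is convex), lies in $j\mathbb B$, and is within $\|w\|-j\le\mu$ of $w$; this boundary correction is exactly the point where convexity and $0\in K$ are used, and it mirrors the radial arguments already used in Lemma~\ref{lem:car-dAW}-(1) and Lemma~\ref{lem:homotopyK0}. Combining this with the symmetric estimate gives $d_H\big((K+t\mathbb B)\cap j\mathbb B,(A+t\mathbb B)\cap j\mathbb B\big)\le 2\mu$.

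Finally I would convert a bound on $d_{AW}(K,A)$ into a bound on $\mu$. Since $r\mapsto d_H(K\cap r\mathbb B,A\cap r\mathbb B)=\sup_{x\in r\mathbb B}|d(x,K)-d(x,A)|$ is nondecreasing in $r$ by Lemma~\ref{lem:car-dAW}-(3), picking an integer $m\ge j+1$ with $1/m<\varepsilon/4$ and applying Lemma~\ref{lem:car-dAW}-(5) at scale $m$ shows that $d_{AW}(K,A)<1/m$ forces $\mu\le d_H(K\cap m\mathbb B,A\cap m\mathbb B)<1/m$, so $2\mu<\varepsilon/2$. Taking $\delta=\min\{1/m,\varepsilon/2\}$ then makes both terms sum to less than $\varepsilon$, giving the desired inequality and hence continuity of $H$ at $(A,s)$.
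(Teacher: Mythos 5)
Your proof is correct, but it follows a genuinely different route from the paper. The paper invokes Fact~\ref{fact:topologies are the same} to replace the Attouch--Wets metric by the Fell topology, and then verifies continuity of $H$ directly on the subbasic hit-and-miss sets: for $U^-$ it uses continuity of addition in $\mathbb{R}^n$, and for $(\mathbb{R}^n\setminus M)^+$ it uses a small compactness argument producing a $\delta$-thickening that keeps the sets disjoint. That argument is purely qualitative and quite short. You instead stay inside the metric framework: you reduce to Hausdorff estimates on $j\mathbb{B}$ via Lemma~\ref{lem:car-dAW}-(5), split by the triangle inequality into a $t$-variation (controlled by the $1$-Lipschitz bound $d_H(A+t\mathbb{B},A+s\mathbb{B})\le|t-s|$ together with Lemma~\ref{lem:car-dAW}-(3)) and a $K$-variation (controlled by localizing to $K\cap(j+1)\mathbb{B}$, transporting points by $k+tb\mapsto a+tb$, and fixing the boundary defect with the radial retraction onto $j\mathbb{B}$, which is legitimate since $0\in A+t\mathbb{B}$ and the set is convex), and finally convert the $d_{AW}$-hypothesis into a bound on $\mu=d_H\bigl(K\cap(j+1)\mathbb{B},A\cap(j+1)\mathbb{B}\bigr)$ using monotonicity of $r\mapsto\sup_{x\in r\mathbb{B}}|d(x,K)-d(x,A)|$ and Lemma~\ref{lem:car-dAW}-(5) at a larger scale $m$. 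All of these steps check out, including the estimate $\|w\|-j\le\mu$ justifying the retraction correction and the use of $\varepsilon'=1/m$ in part (5). What each approach buys: the paper's Fell-topology argument is shorter and avoids all bookkeeping with radii; yours is quantitative and uniform (it exhibits explicit moduli of continuity in both arguments, independent of the point $(A,s)$ beyond the scale $j$), and it is methodologically consistent with the paper's own proof of Lemma~\ref{lem:homotopyK0}, so the two lemmas could be proved with one common technique.
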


\begin{proof} 

By Fact~\ref{fact:topologies are the same}, it is enough to prove that $H$ is continuous with respect to the Fell topology on $\mathcal K_0^n$. Let $U\subset \mathbb R^n$ be an open set and consider $(K,t)\in\mathcal{K}_0^n\times [0,1]$ with $H(K,t)\in U^-$. Thus we can find $a\in K$ and $b\in \mathbb B$ such that $a+tb\in U$. By the continuity of the scalar product and the sum operation on $\mathbb R^n$,  we can find open neighborhoods $V$ and $W$ of $a$ and $t$, respectively, such that
$$a'+t'b\in U\quad\text{ for every }a'\in V\text{ and }t'\in W.$$
This proves that $H(K', t')\in U^-$ for every $K'\in V^-$ and $t'\in W$. 

On the other hand, if $M\subset \mathbb R^n$ is compact and $K+t\mathbb B=H(K,t)\in (\mathbb R^n\setminus M)^+$, we can  find $\delta>0$ small enough such that
$$(K+t\mathbb B+\delta \mathbb B)\cap (M+\delta\mathbb B)=\emptyset.$$
Clearly, $K\cap (M+(t+\delta)\mathbb B)=\emptyset$. Thus, since $C:=(M+(t+\delta)\mathbb B)$ is compact,  $(\mathbb R^n\setminus C)^+$ is an open neighborhood of $K$.
Finally, let $A\in (\mathbb R^n\setminus C)^+$ and $s\in[0,1]$ with $|s-t|<\delta$. If $a\in A$, $b\in \mathbb B$ and $x\in M$ are arbitrary, then   the following inequality holds.
$$\|a+sb-x\|\geq\|a-x\|-\|sb\|>t+\delta-(t+\delta)>0$$
Hence, $H(A,s)\in (\mathbb R^n\setminus M)^+$, which proves the continuity of  $H$. 
\end{proof}

Before proving the main results of this section, let us recall some facts concerning the topology of certain subspaces of $\mathcal K^n$. 

\begin{fact}
\label{fc:proposition35SakYag}
(\cite[Proposition 3.5]{SakaiYaguchi2006})
The subspace $\mathcal{K}_b^n\subset\mathcal{K}^n$ is open.
\end{fact}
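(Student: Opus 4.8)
The plan is to work in the Fell topology $\tau_F$, which by Fact~\ref{fact:topologies are the same} coincides with the Attouch--Wets topology on $\mathcal K^n$, and to exhibit around each compact convex set an explicit subbasic $\tau_F$-neighborhood consisting only of bounded sets. The geometric mechanism is separation by a sphere together with the connectedness that convexity provides.

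First I would fix $A\in\mathcal K^n_b$ and choose $R>0$ with $A\subset\Int(R\mathbb B)$, together with a point $x_0\in A$ and a radius $\rho>0$ small enough that $x_0+\rho\mathbb B\subset\Int(R\mathbb B)$. Then I would set $U:=\Int(x_0+\rho\mathbb B)$, which is open, and let $C:=\{x\in\mathbb R^n:\|x\|=R+1\}$, which is compact. The candidate neighborhood is
$$\mathcal N:=U^-\cap(\mathbb R^n\setminus C)^+=\{B\in\mathcal K^n: B\cap U\neq\emptyset\text{ and }B\cap C=\emptyset\},$$
which is $\tau_F$-open by the very definition of the Fell topology, and contains $A$: indeed $x_0\in A\cap U$, while $A\subset\Int(R\mathbb B)$ is disjoint from the sphere $C$ of radius $R+1$.

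The key step is then to show $\mathcal N\subseteq\mathcal K^n_b$. Let $B\in\mathcal N$. Since $B$ is convex it is connected, and because $B\cap C=\emptyset$ the set $B$ is a connected subset of $\mathbb R^n\setminus C$, hence contained in a single connected component of $\mathbb R^n\setminus C$. The sphere $C$ splits $\mathbb R^n$ into the bounded component $\Int((R+1)\mathbb B)$ and the unbounded exterior $\{\|x\|>R+1\}$; as $B$ meets the bounded one through the point of $U\subset\Int(R\mathbb B)$, we conclude $B\subseteq\Int((R+1)\mathbb B)$, so $B$ is bounded, i.e. $B\in\mathcal K^n_b$. This proves that $\mathcal K^n_b$ is $\tau_F$-open, and therefore open in $(\mathcal K^n,\tau_{AW})$.

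The subtle point --- and the step I expect to be the main obstacle --- is recognizing that no single subbasic set suffices: a ``miss'' condition $(\mathbb R^n\setminus C)^+$ alone is satisfied by far-away unbounded sets, while a ``hit'' condition $U^-$ alone is satisfied by unbounded sets passing through $U$. One must combine a lower (hit) condition anchoring the set inside a ball with an upper (miss) condition on a separating sphere, and only then does connectedness trap the whole set. A purely sequential argument (taking unbounded $A_m\to A$ and analyzing recession directions) would also work but is messier; the separation argument above is the cleanest route and isolates exactly where convexity is used, namely in guaranteeing that $B$ is connected.
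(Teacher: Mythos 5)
Your proof is correct, but there is nothing in the paper to compare it against: the paper does not prove this fact, it imports it verbatim from \cite[Proposition 3.5]{SakaiYaguchi2006}. So your argument should be judged as a self-contained alternative to the cited source, and as such it works: the set $\mathcal N=U^-\cap(\mathbb R^n\setminus C)^+$ is Fell-open by definition, contains $A$, and any $B\in\mathcal N$ is a connected (because convex) subset of $\mathbb R^n\setminus C$ meeting the bounded component of that complement, hence is trapped in $\Int\big((R+1)\mathbb B\big)$; being closed and bounded it lies in $\mathcal K^n_b$. Two small remarks. First, the auxiliary point $x_0$ and radius $\rho$ are unnecessary: you may simply take $U=\Int(R\mathbb B)$ as the hit set, since $A\subset U$ and $A\neq\emptyset$. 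Second, it is worth contrasting your route with the metric argument that is the natural one for $d_{AW}$ (and is essentially what the cited source does): if $K$ is unbounded with $d_{AW}(A,K)$ small, then $K$, being closed convex and nonempty, contains a recession ray emanating from a point near $A$, hence contains points $q$ with $\|q\|$ just below the cutoff radius $j$ but with $d(q,A)$ bounded below by a fixed positive amount, contradicting $\sup_{\|x\|<j}|d(x,A)-d(x,K)|<\varepsilon$ via the equivalence (\ref{eq:Lemma-Ananda}). Your approach avoids recession cones entirely and reduces the role of convexity to mere connectedness, which is more elementary and more transparent; the price is that it proves openness in $\tau_F$ and then invokes Fact~\ref{fact:topologies are the same} to transfer the conclusion to $\tau_{AW}$, whereas the metric argument works with $d_{AW}$ directly. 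You are also implicitly using $n\geq 2$ when asserting that the sphere separates $\mathbb R^n$ into exactly two components, but this matches the paper's standing assumption (and the $n=1$ case would go through with the obvious modification).
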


Let $cb(n):=\{A\in\mathcal{K}_{b}^n:\Int(A)\neq\emptyset\}$. The  following results from \cite{AntonyanNatalia} will be used in the proof of the proposition below.

\begin{fact}\label{fc:cbn-topology}(\cite[Corollary 3.11]{AntonyanNatalia}) The space $\big(cb(n),d_H\big)$ is homeomorphic to $Q\times\mathbb{R}^{\frac{n(n+3)}{2}}.$
\end{fact}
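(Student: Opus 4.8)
The plan is to factor a convex body into its affine ``position and scale'' (which will produce the Euclidean factor $\mathbb R^{n(n+3)/2}$) and its ``shape'' (which will produce the Hilbert cube), using the L\"owner ellipsoid as the separating device. Recall that the full affine group $\mathrm{Aff}(n)=GL(n)\ltimes\mathbb R^n$ acts continuously on $(cb(n),d_H)$ by $g\cdot A:=g(A)$, and that assigning to each body its L\"owner ellipsoid (the unique minimal-volume ellipsoid containing it) gives a continuous, affine-equivariant map $L:cb(n)\to\mathcal E(n)$, where $\mathcal E(n)$ is the space of non-degenerate ellipsoids. Since every ellipsoid is $g(\mathbb B)$ for some $g\in\mathrm{Aff}(n)$ and the affine maps fixing $\mathbb B$ are exactly $O(n)$, one identifies $\mathcal E(n)=\mathrm{Aff}(n)/O(n)$. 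Using the polar decomposition $GL(n)=O(n)\cdot P(n)$, with $P(n)$ the open convex cone of positive-definite symmetric matrices, one obtains $\mathrm{Aff}(n)/O(n)\cong P(n)\times\mathbb R^n$; as $P(n)$ is an open convex subset of the $\tfrac{n(n+1)}2$-dimensional space of symmetric matrices, it is homeomorphic to $\mathbb R^{n(n+1)/2}$, whence
\[
\mathcal E(n)\cong\mathbb R^{\frac{n(n+1)}2}\times\mathbb R^{n}=\mathbb R^{\frac{n(n+3)}2}.
\]
This already explains the finite-dimensional exponent.

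Next I would exploit the equivariance of $L$ to split off this Euclidean factor. Setting $F:=L^{-1}(\mathbb B)=\{A\in cb(n):\text{the L\"owner ellipsoid of }A\text{ equals }\mathbb B\}$, the map $[g,A]\mapsto g(A)$ realizes $cb(n)$ as the associated bundle $\mathrm{Aff}(n)\times_{O(n)}F$ over $\mathcal E(n)=\mathrm{Aff}(n)/O(n)$, with fibre $F$ and compact structure group $O(n)$ (one checks this is a genuine locally trivial bundle via the slice theorem for the proper action of $\mathrm{Aff}(n)$). Because the base $\mathcal E(n)$ is homeomorphic to a Euclidean space, hence contractible and paracompact, the bundle is trivial, giving
\[
cb(n)\cong\mathcal E(n)\times F\cong\mathbb R^{\frac{n(n+3)}2}\times F.
\]
It is worth noting that this bundle step is genuinely necessary: $cb(n)$ is contractible (via the Minkowski homotopy $(A,t)\mapsto(1-t)A+t\mathbb B$ onto $\mathbb B$), but contractibility alone cannot pin down the non-compact structure, since $\mathbb R^k\times Q$ for different $k$ are all contractible yet pairwise non-homeomorphic. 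Everything is thus reduced to proving $F\cong Q$.

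To identify the fibre, I would show that $F$ is a compact, contractible Hilbert-cube manifold. By John's theorem every $A\in F$ satisfies $\tfrac1n\mathbb B\subseteq A\subseteq\mathbb B$, so $F$ is contained in a fixed ball and, by the Blaschke selection theorem, is compact. The homotopy $(A,t)\mapsto(1-t)A+t\mathbb B$ keeps $A$ inside $\mathbb B$ and preserves the contact points of $A$ with $\partial\mathbb B$, so by the uniqueness part of John's theorem the L\"owner ellipsoid stays equal to $\mathbb B$; hence this homotopy stays inside $F$ and contracts it to $\mathbb B$, showing $F$ is contractible. Granting that $F$ is a $Q$-manifold, Theorem~\ref{thm:vanMillthm758} (every compact contractible $Q$-manifold is homeomorphic to $Q$) would then give $F\cong Q$ and complete the argument.

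The main obstacle is precisely this last point: recognizing the fibre $F$ as a $Q$-manifold. The finite-dimensional ellipsoid bookkeeping above is elementary, but proving that a compact family of convex bodies carries the local structure of the Hilbert cube is the genuinely infinite-dimensional step. Here I would embed $F$, via support functions, as a closed subset of the compact convex set $\mathcal C=\{A:\tfrac1n\mathbb B\subseteq A\subseteq\mathbb B\}\subset C(S^{n-1})$, which is itself homeomorphic to $Q$ by Keller's theorem, and then establish the $Q$-manifold property by verifying Toru\'nczyk's characterization (a compact $\mathrm{ANR}$ with the disjoint-cells property is a $Q$-manifold) through general-position and $Z$-set techniques. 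This is exactly the circle of ideas used in the Nadler--Quinn--Stavrakas theorem cited in the introduction (\cite{nadler1979}), that hyperspaces of convex subsets of a planar or higher-dimensional convex body are Hilbert cubes, and adapting those approximation arguments to the L\"owner-normalized fibre $F$ is where the real work lies.
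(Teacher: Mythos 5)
This Fact is not proved in the paper at all: it is quoted from \cite[Corollary 3.11]{AntonyanNatalia}, and the skeleton of your outline (L\"owner normalization, $\mathcal{E}(n)\cong\mathrm{Aff}(n)/O(n)\cong\mathbb{R}^{\frac{n(n+3)}{2}}$, splitting $cb(n)$ as this Euclidean factor times the fibre $F$ of bodies in L\"owner position) is indeed the strategy of that source. The parts you actually carry out are correct: the dimension count, the compactness of $F$ (John's theorem plus Blaschke selection), and the contractibility of $F$ --- your observation that the contact points of $A$ with $S^{n-1}$ survive in $(1-t)A+t\mathbb{B}$, so that John's characterization keeps the L\"owner ellipsoid equal to $\mathbb{B}$, is exactly right. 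The genuine gap is the step you yourself flag: $F\cong Q$ is never proved. Moreover, the route you sketch for it (Toru\'nczyk's disjoint-cells criterion) presupposes that $F$ is a compact ANR, and nothing in your proposal gives that: compactness plus contractibility do not imply the ANR property, and $F$ is not convex in the support-function (Minkowski) linear structure --- already for $n=2$, two equilateral triangles inscribed in $S^{1}$, one rotated by $60^{\circ}$, both lie in $F$, while their Minkowski midpoint misses $S^{1}$ entirely and so has a strictly smaller L\"owner ellipse --- hence Keller's theorem applies to your ambient set $\mathcal{C}$ but not to $F$ itself. As written, the proof is an outline whose decisive infinite-dimensional step is missing.

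Both missing items can be handled, and the ``real work'' you defer can in fact be bypassed entirely using machinery this paper already quotes. Writing each ellipsoid uniquely as $E=c+P(\mathbb{B})$ with $P$ positive definite gives an explicit continuous section $s(E)(x):=Px+c$ of $\mathrm{Aff}(n)\to\mathrm{Aff}(n)/O(n)$; then $A\mapsto\big(L(A),\,s(L(A))^{-1}(A)\big)$ is a homeomorphism of $cb(n)$ onto $\mathcal{E}(n)\times F$ (no slice theorem or bundle classification is needed), and $r(A):=s(L(A))^{-1}(A)$ is a retraction of $cb(n)$ onto $F$. Next, $(cb(n),d_H)$ is a $Q$-manifold: it is open in $(\mathcal{K}_b^n,d_H)$ (immediate from Fact~\ref{lem:cbn-open}), and $\mathcal{K}_b^n$ is a $Q$-manifold because each $A\in\mathcal{K}_b^n$ lies in $\{B\in\mathcal{K}_b^n:B\subset\Int(r\mathbb{B})\}$ for large $r$, an open subset of $\{B\in\mathcal{K}_b^n:B\subseteq r\mathbb{B}\}\cong Q$ by \cite{nadler1979} (this is where $n\geq2$ enters; the Fact fails for $n=1$). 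In particular $cb(n)$ is an ANR (Fact~\ref{fact: Q manifolds are ANR}), so its retract $F$ is a compact AR. Now finish by stability instead of identifying $F$: $F\times Q$ is a compact contractible $Q$-manifold (Theorem~\ref{thm: X times Q}), hence $F\times Q\cong Q$ (Theorem~\ref{thm:vanMillthm758}), and Chapman's stability theorem (Theorem~\ref{thm:ChapmanThm151}) yields
\[
cb(n)\;\cong\;cb(n)\times Q\;\cong\;\mathbb{R}^{\frac{n(n+3)}{2}}\times (F\times Q)\;\cong\;\mathbb{R}^{\frac{n(n+3)}{2}}\times Q.
\]
So what is needed, and what your proposal omits, is not a general-position argument for $F$ but its ANR property, and that is exactly what the explicit normalization retraction supplies.
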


\begin{fact}
(\cite[Lemma 3.1]{AntonyanNatalia})
\label{lem:cbn-open}
Let $A,C\in cb(n)$ and let $x_0\in A$ be such that $x_0+2\varepsilon\mathbb{B}\subseteq A,$ for certain $\varepsilon>0.$ If $d_H(A,C)<\varepsilon,$ then $x_0+\varepsilon\mathbb{B}\subseteq C$.
\end{fact}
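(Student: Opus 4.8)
The plan is to prove the stated inclusion pointwise by a separating‑hyperplane argument that exploits the slack built into the hypothesis. The intuition is that $x_0+2\varepsilon\mathbb{B}\subseteq A$ gives $A$ a margin of width $2\varepsilon$ around $x_0$ in every direction, while the bound $d_H(A,C)<\varepsilon$ can shrink this margin by strictly less than $\varepsilon$ when passing from $A$ to $C$; the surviving margin of $\varepsilon$ is precisely the assertion $x_0+\varepsilon\mathbb{B}\subseteq C$.

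First I would fix an arbitrary $y$ with $\|y-x_0\|\le\varepsilon$ and suppose, towards a contradiction, that $y\notin C$. Since $C\in cb(n)$ is compact and convex and $y\notin C$, the separation theorem provides a unit vector $u\in\mathbb{R}^n$ with $\langle y,u\rangle>\sup_{c\in C}\langle c,u\rangle$. The decisive move is to probe this functional with the point $z:=x_0+2\varepsilon u$, which lies in $A$ because $\|z-x_0\|=2\varepsilon$ and $x_0+2\varepsilon\mathbb{B}\subseteq A$. Using the third expression for the Hausdorff metric recalled in Section~\ref{sec:prelim} together with $d(z,A)=0$, I obtain $d(z,C)\le d_H(A,C)<\varepsilon$, and by compactness of $C$ there is $c\in C$ with $\|z-c\|<\varepsilon$.

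Then the Cauchy--Schwarz inequality and $\|u\|=1$ give $\langle c,u\rangle\ge\langle z,u\rangle-\|z-c\|>\langle x_0,u\rangle+2\varepsilon-\varepsilon=\langle x_0,u\rangle+\varepsilon$, while on the other side $\langle y,u\rangle\le\langle x_0,u\rangle+\|y-x_0\|\le\langle x_0,u\rangle+\varepsilon$. Chaining these yields $\langle y,u\rangle\le\langle x_0,u\rangle+\varepsilon\le\langle c,u\rangle\le\sup_{c'\in C}\langle c',u\rangle$, contradicting the strict separation. Hence $y\in C$, and since $y$ ranged over all of $x_0+\varepsilon\mathbb{B}$, we conclude $x_0+\varepsilon\mathbb{B}\subseteq C$.

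I do not expect a genuine obstacle here: once $z=x_0+2\varepsilon u$ is chosen the estimate is forced, and the coefficient $2\varepsilon$ is exactly what makes the two uses of $\varepsilon$---one consumed by the Hausdorff passage from $A$ to $C$, the other matching the radius of the target ball---fit together. The only point requiring care is the bookkeeping of strict versus non‑strict inequalities, which is why I extract $d(z,C)<\varepsilon$ strictly from $d_H(A,C)<\varepsilon$. An alternative, equally short route avoids points by comparing support functions: $x_0+2\varepsilon\mathbb{B}\subseteq A$ and $|h_A-h_C|\le d_H(A,C)<\varepsilon$ on the unit sphere force $h_{x_0+\varepsilon\mathbb{B}}\le h_C$, whence the inclusion; I would nonetheless favor the separation version above, since it relies only on the characterizations of $d_H$ already introduced in the excerpt.
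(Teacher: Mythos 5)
Your argument is correct. One point of context: the paper does not prove this statement at all --- it is imported as Fact~\ref{lem:cbn-open} directly from \cite[Lemma 3.1]{AntonyanNatalia}, so there is no internal proof to compare yours against; what you have written is a self-contained proof of an externally cited fact. The separation argument works exactly as you set it up: $z=x_0+2\varepsilon u$ lies in $A$, the identity $d_H(A,C)=\sup_{x\in\mathbb{R}^n}|d(x,A)-d(x,C)|$ together with $d(z,A)=0$ gives $d(z,C)<\varepsilon$ (closedness of $C$ suffices to pick $c\in C$ attaining this distance; compactness is not needed), and the two Cauchy--Schwarz estimates leave a strict contradiction with the separating functional. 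The inequality bookkeeping is sound: your chain $\langle y,u\rangle\le\langle x_0,u\rangle+\varepsilon\le\langle c,u\rangle\le\sup_{c'\in C}\langle c',u\rangle$ contradicts the strict separation $\langle y,u\rangle>\sup_{c'\in C}\langle c',u\rangle$, even though the middle inequality could have been recorded as strict. The support-function variant you sketch at the end is equally valid and arguably shorter, since $K\subseteq L$ for closed convex sets is equivalent to $h_K\le h_L$ and $|h_A(u)-h_C(u)|\le d_H(A,C)$ for unit vectors $u$; either route is a legitimate replacement for the citation.
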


As a direct consequence of Fact~\ref{lem:cbn-open}, we obtain the following. 

\begin{fact}\label{fact: K(0) is open}
$(\mathcal{K}_{(0),b}^n, d_H)$ is an open set in 
$(cb(n),d_H)$.
\end{fact}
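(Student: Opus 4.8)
The plan is to show that $\mathcal{K}_{(0),b}^n$ is open inside $cb(n)$ by exhibiting, around an arbitrary $A\in\mathcal{K}_{(0),b}^n$, a Hausdorff-ball of sets still containing $0$ in their interior. Since $A\in\mathcal{K}_{(0),b}^n$ means $0\in\operatorname{Int}(A)$, there exists $\varepsilon>0$ with $0+2\varepsilon\mathbb{B}\subseteq A$; that is, $2\varepsilon\mathbb{B}\subseteq A$.

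Next I would apply Fact~\ref{lem:cbn-open} directly with $x_0=0$ and $C$ ranging over the open $d_H$-ball of radius $\varepsilon$ about $A$ in $cb(n)$. The cited fact then gives $0+\varepsilon\mathbb{B}=\varepsilon\mathbb{B}\subseteq C$ for every such $C$, so in particular $0\in\operatorname{Int}(C)$, i.e.\ $C\in\mathcal{K}_{(0),b}^n$. This shows that the $\varepsilon$-ball about $A$ in $cb(n)$ is contained in $\mathcal{K}_{(0),b}^n$, which is exactly openness.

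There is essentially no obstacle here: the statement is a one-line corollary of Fact~\ref{lem:cbn-open}, and the only thing to check is that $\mathcal{K}_{(0),b}^n\subseteq cb(n)$ so that the ambient space is the right one. That containment is immediate, since any compact convex set with $0$ in its interior has nonempty interior and hence lies in $cb(n)$. The specialization $x_0=0$ is the natural choice because membership in $\mathcal{K}_{(0),b}^n$ is precisely the condition that $0$ be interior, and Fact~\ref{lem:cbn-open} is tailored to propagate an interior ball around a fixed point under small Hausdorff perturbations.
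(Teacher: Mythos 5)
Your proof is correct and follows exactly the route the paper intends: the paper states this fact as a direct consequence of Fact~\ref{lem:cbn-open}, and your argument (taking $x_0=0$, choosing $\varepsilon$ with $2\varepsilon\mathbb{B}\subseteq A$, and applying that fact to every $C$ in the $\varepsilon$-ball) is precisely the spelled-out version of that one-line deduction. Nothing is missing; your check that $\mathcal{K}_{(0),b}^n\subseteq cb(n)$ and that $\varepsilon\mathbb{B}\subseteq C$ forces $C\in\mathcal{K}_{(0),b}^n$ covers the only details that need verification.
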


\begin{proposition} \label{prop:K0AR}
For any integer $n\geq2$, the following statements hold:
\begin{enumerate}
\item $\mathcal{K}_{(0)}^n$ and $\mathcal{K}_{(0),b}^n$  are open subsets of $\mathcal{K}^n.$ Furthermore, 
$\mathcal{K}_{(0),b}^n$ is a $Q$-manifold.
\item $\mathcal{K}_{0}^n$ is a contractible space. 
\item $\mathcal{K}_{0}^n$  is  a compact AR
\end{enumerate}
\end{proposition}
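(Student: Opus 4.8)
The plan is to prove the three statements in sequence, using each part to feed the next. For part (1), I would establish openness of $\mathcal{K}_{(0)}^n$ and $\mathcal{K}_{(0),b}^n$ by leveraging the facts already collected. Fact~\ref{fact: K(0) is open} tells us $\mathcal{K}_{(0),b}^n$ is open in $(cb(n),d_H)$, and since $cb(n)$ is open in $\mathcal{K}_b^n$ (which is open in $\mathcal{K}^n$ by Fact~\ref{fc:proposition35SakYag}), and since $d_H$ and $d_{AW}$ agree on $\mathcal{K}_b^n$ by Fact~\ref{fact:d_H y d_AW coincinden en K_b}, I can conclude $\mathcal{K}_{(0),b}^n$ is open in $\mathcal{K}^n$. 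For $\mathcal{K}_{(0)}^n$, I would argue that having $0$ in the interior is an open condition in the Attouch-Wets topology, using the fact that $0\in\Int(A)$ iff $\varepsilon\mathbb B\subseteq A$ for some $\varepsilon>0$, which localizes to behavior on a small ball and is therefore $d_{AW}$-open. To see $\mathcal{K}_{(0),b}^n$ is a $Q$-manifold, I combine Fact~\ref{fc:cbn-topology} (so $cb(n)\cong Q\times\mathbb{R}^{n(n+3)/2}$, a $Q$-manifold by Theorem~\ref{thm: X times Q}) with Fact~\ref{fact: open subset of a Q manifold}, since $\mathcal{K}_{(0),b}^n$ is open in $cb(n)$.

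For part (2), the contractibility of $\mathcal{K}_0^n$, I would build an explicit deformation that shrinks every closed convex set containing $0$ toward a fixed base point. A natural choice is to first use the homotopy $F$ from Lemma~\ref{lem:homotopyK0} to truncate every set $K$ to a bounded set $K\cap r(t)\mathbb B$ (making it compact for $t>0$), and then deform the resulting compact convex bodies continuously to the singleton $\{0\}$. The key point is that the straight-line homotopy $(K,t)\mapsto (1-t)K$ toward $\{0\}$ stays inside $\mathcal{K}_0^n$ because $0\in K$ and convexity is preserved under scaling; its continuity can be checked against $d_{AW}$ using Lemma~\ref{lem:car-dAW}. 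I would concatenate these homotopies, being careful at the endpoints, to obtain a contraction of $\mathcal{K}_0^n$ to $\{0\}$.

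For part (3), that $\mathcal{K}_0^n$ is a compact AR, the strategy is to combine part (2) with the ANR property and then upgrade to compactness. By (R4), a contractible ANR is an AR, so it suffices to show $\mathcal{K}_0^n$ is a compact ANR. The ANR property I would obtain via (R5): $\mathcal{K}_{(0),b}^n$ is a homotopy dense subset of $\mathcal{K}_0^n$ (the truncation-plus-thickening homotopy $K\mapsto (K\cap r(t)\mathbb B)+s\mathbb B$ from Lemmas~\ref{lem:homotopyK0} and~\ref{lem:homotopy H} pushes everything into $\mathcal{K}_{(0),b}^n$ for positive parameters), and since $\mathcal{K}_{(0),b}^n$ is a $Q$-manifold it is an ANR by Fact~\ref{fact: Q manifolds are ANR}; hence $\mathcal{K}_0^n$ is an ANR. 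Combined with contractibility from (2) and (R4), this gives AR. Compactness would follow from the characterization of convergence in $(\mathcal{K}^n,\tau_{AW})$ via a sequential/diagonal argument: any sequence in $\mathcal{K}_0^n$ has, by local compactness of the Hausdorff metric on each ball $j\mathbb B$, a subsequence whose truncations $K\cap j\mathbb B$ converge for every $j$, and these limits cohere into a limit set in $\mathcal{K}_0^n$ by Lemma~\ref{lem:car-dAW}-(4).

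The main obstacle I anticipate is establishing homotopy density of $\mathcal{K}_{(0),b}^n$ in $\mathcal{K}_0^n$ with the requisite continuity and the correct endpoint behavior: the deformation must land inside $\mathcal{K}_{(0),b}^n$ (compact \emph{and} with $0$ in the interior) for all positive parameter values while reducing to the identity at parameter $0$, and verifying its joint continuity in the Attouch-Wets metric requires carefully splicing the two homotopies of Lemmas~\ref{lem:homotopyK0} and~\ref{lem:homotopy H}. The compactness argument is the secondary difficulty, since $d_{AW}$-convergence is genuinely weaker than Hausdorff convergence and one must verify that the coherent family of local limits indeed defines a single element of $\mathcal{K}_0^n$ realizing the limit.
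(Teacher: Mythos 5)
Your overall architecture coincides with the paper's: part (1) via the $cb(n)$ facts and openness of subsets of $Q$-manifolds, part (2) via the truncation homotopy of Lemma~\ref{lem:homotopyK0}, and part (3) via homotopy density of $\mathcal{K}_{(0),b}^n$ together with (R5) and (R4). There are two genuine differences. First, for contractibility you truncate and then concatenate a straight-line scaling homotopy $(K,t)\mapsto(1-t)K$; this works but is unnecessary: the paper simply takes $r(t)=\frac{1-t}{t}$ in Lemma~\ref{lem:homotopyK0}, and since $r(1)=0$ the truncation homotopy already ends at $F(A,1)=A\cap 0\mathbb{B}=\{0\}$, so no concatenation (with its attendant continuity check at the splice) is needed. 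Second, for compactness you run a Blaschke-selection diagonal argument on the truncations $K_m\cap j\mathbb{B}$ and glue the coherent local limits; the paper instead observes that $(\mathcal{K}^n_*,\tau_F)$ is the one-point compactification of $(\mathcal{K}^n,\tau_F)$ and that $\mathcal{K}_0^n$ is closed in it, its complement being the Fell-open set $(\mathbb{R}^n\setminus\{0\})^+$; Fact~\ref{fact:topologies are the same} then finishes. Your route is viable (the coherence $C_{j'}\cap j\mathbb{B}=C_j$ does hold, using $0\in K_m$ to shrink points radially), but it is substantially longer than the two-line Fell argument. Also note that your chain of opennesses uses that $cb(n)$ is open in $(\mathcal{K}_b^n,d_H)$, which is not among the paper's quoted facts, though it does follow from Fact~\ref{lem:cbn-open}.

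The one place where your plan glosses over real content is the openness of $\mathcal{K}_{(0)}^n$ in $\mathcal{K}^n$. Saying that the condition $\varepsilon\mathbb{B}\subseteq A$ ``localizes to behavior on a small ball and is therefore $d_{AW}$-open'' is an assertion, not an argument: a set $K\in\mathcal{K}^n$ that is $d_{AW}$-close to $A$ need not a priori contain $0$ at all, and you cannot invoke Lemma~\ref{lem:car-dAW} (hence cannot convert $d_{AW}$-closeness into Hausdorff closeness of truncations) before knowing $K\in\mathcal{K}_0^n$, since that lemma is stated only for pairs in $\mathcal{K}_0^n$. The paper therefore argues in two steps: first a closest-point argument showing that any $K$ with $d_{AW}(A,K)<\varepsilon_0$ must contain $0$, and only then Lemma~\ref{lem:car-dAW} together with Fact~\ref{lem:cbn-open} to produce $\frac{\varepsilon_0}{2}\mathbb{B}\subseteq K$. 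Alternatively you could fill the gap directly: by (\ref{eq:Lemma-Ananda}), which holds for arbitrary nonempty closed sets, one gets $d(x,K)<\delta$ for all $x\in\varepsilon_0\mathbb{B}$, and a separating-hyperplane argument then shows $(\varepsilon_0-\delta)\mathbb{B}\subseteq K$: if $z\notin K$ and $u$ is a unit vector with $\sup_{y\in K}\langle u,y\rangle\leq\langle u,z\rangle$, the point $w=z+(\varepsilon_0-\|z\|)u$ lies in $\varepsilon_0\mathbb{B}$ and satisfies $d(w,K)\geq\varepsilon_0-\|z\|$. Either way, this step needs an actual proof; as written, it is the only genuine gap in your proposal.
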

\begin{proof}
(1) First, we show that $\mathcal{K}_{(0)}^n\subset\mathcal{K}^n$ is open. Let $A\in\mathcal{K}_{(0)}^n$ and pick $\varepsilon_0>0$ such that $\varepsilon_0\mathbb{B}\subset A.$ We shall prove that every  $K\in\mathcal{K}^n$ with $d_{AW}(A,K)<\varepsilon_0$ belongs to $\mathcal{K}_{(0)}^n$. Assume, without lost of generality, that $\frac{1}{j+1}<\varepsilon_0\leq\frac{1}{j}$ for some integer $j\geq1.$  By (\ref{eq:Lemma-Ananda}),
\begin{equation*}
    \sup_{\|x\|<j}|d(x,A)-d(x,K)|<\varepsilon_0.
\end{equation*}
Let $p\in K$ be the closest point to $0$. If $p\neq 0$, then $w:=\frac{-\varepsilon_0}{\|p\|}p\in\varepsilon_0\mathbb B\subset A$ and $d(w, K)=\varepsilon_0+\|p\|>\varepsilon_0$.
Therefore
$$\varepsilon_0<d(w, K)\leq \sup_{\|x\|<j}|d(x,A)-d(x,K)|<\varepsilon_0,$$
a contradiction. This implies that $p=0$ and hence $K\in\mathcal K^n_0$. We can now use
Lemma~\ref{lem:car-dAW} to infer that
 $$d_{H}(A\cap j\mathbb{B},K\cap j\mathbb{B})=\sup_{\|x\|<j}|d(x,A)-d(x,K)|<\varepsilon_0.$$ 
Observe that $A\cap j\mathbb{B}\in\mathcal{K}_{(0),b}^n$ and  $\varepsilon_0\mathbb{B}\subset A\cap j\mathbb{B}.$  Thus, from  Fact \ref{lem:cbn-open}, it follows that $\frac{\varepsilon_0}{2}\mathbb{B}\subset K\cap j\mathbb{B}\subset K$ and therefore $K\in\mathcal{K}_{(0)}^n$, as desired.

In order to prove that $\mathcal{K}_{(0),b}^n$ is  open, let us recall that $\mathcal{K}_{b}^n\subset\mathcal{K}^n$ is an open set (Fact \ref{fc:proposition35SakYag}). Thus $\mathcal{K}_{(0),b}^n=\mathcal{K}_{b}^n\cap\mathcal{K}_{(0)}^n$  is an intersection of two open sets and therefore it is also open in $\mathcal K^n_0$.

Recall that the metrics $d_H$ and $d_{AW}$ generate the same topology on $\mathcal{K}_{(0),b}^n$ (Fact \ref{fact:d_H y d_AW coincinden en K_b}). In consequence, to prove that 
$\mathcal{K}_{(0),b}^n$ is a $Q$-manifold, it is enough to show that 
$(\mathcal{K}_{(0),b}^n,d_{H})$ is a $Q$-manifold.  Indeed, 
from Fact \ref{fact: K(0) is open},
 it follows that  $(\mathcal{K}_{(0),b}^n,d_{H})$ is an open set in $(cb(n),d_H).$  Since $(cb(n),d_H)$ is a $Q$-manifold (Fact \ref{fc:cbn-topology} and Theorem~\ref{thm: X times Q}), $(\mathcal{K}_{(0),b}^n,d_{H})$  must be a $Q$-manifold too (Fact~\ref{fact: open subset of a Q manifold}).

(2) Notice that if we let $r(t)=\frac{1-t}{t},$ $t\in(0,1],$ be the map of Lemma \ref{lem:homotopyK0}, then the corresponding map $F:\mathcal{K}_0^n\times[0,1]\rightarrow\mathcal{K}_0^n$ is a  homotopy such that $F(A,0)=A$ and $F(A,1)=\{0\}$ for all $A\in\mathcal{K}_0^n.$ Hence, $\mathcal{K}_0^n$ is contractible to $\{0\}.$ 

(3) We will show that $\mathcal{K}_{0}^n$ is compact with respect to the Fell topology $\tau_F,$ then the compactness of $(\mathcal{K}_{0}^n,d_{AW})$  will follow from the fact that the topologies $\tau_F$ and $\tau_{AW}$ coincide on $\mathcal{K}_{0}^n$. Since $\mathcal{K}^n_*=\mathcal{K}^n\cup\{\emptyset\}$ endowed with $\tau_F$ is the Alexsandroff one-point compactification of $(\mathcal{K}^n,\tau_F)$ (\cite[Proposition 1]{SakaiYang2007}), it is enough to prove that $\mathcal{K}_{0}^n$ is a closed subset of $\mathcal{K}^n_*.$  But this follows directly from the fact that the complement $\mathcal{K}^n_*\setminus\mathcal{K}_{0}^n=(\mathbb{R}^n\setminus\{0\})^+$ is an open set in $\mathcal{K}^n_*.$ 

We turn to prove that $\mathcal{K}_{0}^n$ is an $\mathrm{AR}$. To do so, observe that by (1) of this proposition, $\mathcal{K}_{(0),b}^n$ is a $Q$-manifold, so it is an $\mathrm{ANR}$ (Fact~\ref{fact: Q manifolds are ANR}). Even more, by the lemmas \ref{lem:homotopyK0} and \ref{lem:homotopy H}, the map $h:\mathcal{K}_0^n\times[0,1]\rightarrow\mathcal{K}_0^n,$ defined by $h(A,t)=A\cap\frac{1-t}{t}\mathbb{B}+t\mathbb{B},$  $t\neq 0,$ and $h(A,0)=A,$ is a homotopy such that $h_0$ is the identity map of $\mathcal{K}_{0}^n,$ and $h(\mathcal{K}_{0}^n,t)\subseteq\mathcal{K}_{(0),b}^n$ for all $t\in(0,1].$ Thus,  $\mathcal{K}_{(0),b}^n$ is a homotopy dense subset of $\mathcal{K}_{0}^n,$ and therefore, by Property (R5), $\mathcal{K}_{0}^n$ is an $\mathrm{ANR}$. Since $\mathcal{K}_{0}^n$ is a contractible space (see (2) of this proposition), it must be an $\mathrm{AR}$ (Property (R4)).
\end{proof}

In order to prove that $\mathcal K^n_0$ is homemorphic to $Q$, let us recall what a $Z$-set is. A closed subset $Y$ of a metric space $(X,d)$ is called a $Z$\textit{-set} if the set $\{f\in C(Q,X): f(Q)\cap Y=\emptyset\}$ is dense in $C(Q,X).$ Here, $C(Q,X)$ denotes the space of continuous maps from $Q$ to $X$ endowed with the compact-open topology. In particular, if for every $\varepsilon>0,$ there exists a continuous map $f:X\rightarrow X\setminus Y$ 
such that $d(x,f(x))<\varepsilon$, then $Y$ is a $Z$-set. 

The following fact on $Q$-manifolds will be used in the proof of Theorem \ref{thm:K0cuboHilbert}. It shows that any locally compact ANR containing a ``big enough"  $Q$-manifold as an open set, must be a $Q$-manifold.
\begin{fact}
\label{fc:Torun80}
Let $X$ be a locally compact $\mathrm{ANR}$ containing a $Z$-set $Y$. If $X\setminus Y$ is a $Q$-manifold, then $X$ must be a $Q$-manifold too. 
\end{fact}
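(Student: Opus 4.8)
The plan is to obtain this from Toru\'nczyk's topological characterization of $Q$-manifolds, which states that a locally compact, separable, metrizable $\mathrm{ANR}$ is a $Q$-manifold if and only if it has the \emph{disjoint-cells property}: any two maps $f,g\colon Q\to X$ can be approximated, as closely as prescribed by a given open cover of $X$, by maps with disjoint images. By hypothesis $X$ is a locally compact $\mathrm{ANR}$ (hence metrizable), and it is separable: the complement $X\setminus Y$ is a $Q$-manifold, so it is separable, and it is dense in $X$ because a $Z$-set has empty interior. Thus the standing hypotheses of the characterization are met, and everything reduces to verifying the disjoint-cells property for $X$.

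To verify it, I would fix two maps $f,g\colon Q\to X$ and an open cover $\mathcal U$ of $X$, then choose an open cover $\mathcal V$ refining $\mathcal U$ finely enough that the composition of two moves, each $\mathcal V$-small, is $\mathcal U$-small. Since $Y$ is a $Z$-set, the very definition (density in $C(Q,X)$ of the maps missing $Y$) lets me replace $f,g$ by $\mathcal V$-close maps $\tilde f,\tilde g\colon Q\to X\setminus Y$ whose images lie in the open set $X\setminus Y$. Now $X\setminus Y$ is a $Q$-manifold, hence is itself a locally compact separable $\mathrm{ANR}$ enjoying the disjoint-cells property; applying that property inside $X\setminus Y$ to the pair $\tilde f,\tilde g$, relative to the trace of $\mathcal V$ on $X\setminus Y$, yields maps $f',g'\colon Q\to X\setminus Y\subset X$ with $f'(Q)\cap g'(Q)=\emptyset$ that are $\mathcal V$-close in $X$ to $\tilde f,\tilde g$ (the members of the trace cover sit inside members of $\mathcal V$). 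Chaining the two approximations shows $f',g'$ are $\mathcal U$-close to $f,g$ and have disjoint images, which is exactly the disjoint-cells property for $X$. The characterization then gives that $X$ is a $Q$-manifold.

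The main obstacle is the cover-bookkeeping in this composition: one must set up the refinements so that first pushing the maps off $Y$ and then separating their images inside $X\setminus Y$ together keep the final maps $\mathcal U$-close to the originals, while noting that the separating move cannot reintroduce intersections with $Y$ since it stays in $X\setminus Y$. It is also worth recording why the elementary tools at hand do not suffice: Theorem~\ref{thm: X times Q} shows that $X\times Q$ is a $Q$-manifold, but one cannot pass from $X\times Q$ back to $X$ through the Stability Theorem~\ref{thm:ChapmanThm151} without already knowing that $X$ is a $Q$-manifold, and breaking precisely this circularity is the role of Toru\'nczyk's criterion. Accordingly, the proof is in essence a citation of that criterion together with the $Z$-set pushing argument above.
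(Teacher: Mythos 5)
Your proposal is correct and takes essentially the same approach as the paper: the paper's ``proof'' is just a citation of the first paragraph of \S 3 of Toru\'nczyk's 1980 paper, where the fact is deduced exactly as you do --- by invoking Toru\'nczyk's characterization of $Q$-manifolds among locally compact $\mathrm{ANR}$s and verifying the disjoint-cells property by first pushing the two maps off the $Z$-set $Y$ and then separating them inside the $Q$-manifold $X\setminus Y$. Your supporting observations (separability of $X$ via density of $X\setminus Y$, the cover bookkeeping via star-refinements, and the remark that Theorems~\ref{thm: X times Q} and~\ref{thm:ChapmanThm151} alone cannot close the argument) are all sound.
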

The proof of this fact appears in the first paragraph of \cite[\textsection 3]{Torunczyk1980}.

\begin{theorem}
\label{thm:K0cuboHilbert}
$\mathcal{K}_{0}^n,$ $n\geq2,$ is homeomorphic to $Q$.
\end{theorem}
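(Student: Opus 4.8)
The plan is to recognize $\mathcal K_0^n$ as a compact, contractible $Q$-manifold and then invoke Theorem~\ref{thm:vanMillthm758}, which says that every such space is homeomorphic to $Q$. Proposition~\ref{prop:K0AR} already supplies almost all of the ingredients: $\mathcal K_0^n$ is a compact $\mathrm{AR}$ (hence a locally compact $\mathrm{ANR}$) and is contractible, while $\mathcal K_{(0),b}^n$ is an open subset of $\mathcal K^n$ that is a $Q$-manifold. The only missing piece is to upgrade the statement ``$\mathcal K_0^n$ is an $\mathrm{AR}$'' to ``$\mathcal K_0^n$ is a $Q$-manifold'', and for this I would use Fact~\ref{fc:Torun80}.

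Concretely, I set $X:=\mathcal K_0^n$ and $Y:=\mathcal K_0^n\setminus\mathcal K_{(0),b}^n$. Since $\mathcal K_{(0),b}^n$ is open in $\mathcal K^n$ and is contained in $\mathcal K_0^n$, the set $Y$ is closed in $X$ and $X\setminus Y=\mathcal K_{(0),b}^n$ is a $Q$-manifold. To apply Fact~\ref{fc:Torun80} it then remains to prove that $Y$ is a $Z$-set in the compact $\mathrm{AR}$ $X$. By the $Z$-set criterion recalled just before Fact~\ref{fc:Torun80}, it suffices to produce, for each $\varepsilon>0$, a continuous map $f:\mathcal K_0^n\to\mathcal K_{(0),b}^n$ with $d_{AW}(A,f(A))<\varepsilon$ for every $A\in\mathcal K_0^n$.

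This is where the homotopy $h$ constructed in the proof of Proposition~\ref{prop:K0AR}(3) does the work. Recall that $h:\mathcal K_0^n\times[0,1]\to\mathcal K_0^n$, given by $h(A,t)=\big(A\cap\frac{1-t}{t}\mathbb B\big)+t\mathbb B$ for $t\neq 0$ and $h(A,0)=A$, is continuous (by Lemmas~\ref{lem:homotopyK0} and~\ref{lem:homotopy H}), satisfies $h_0=\mathrm{id}$, and sends $\mathcal K_0^n$ into $\mathcal K_{(0),b}^n$ for every $t\in(0,1]$. The function $(A,t)\mapsto d_{AW}(A,h(A,t))$ is continuous and vanishes identically on the compact slice $\mathcal K_0^n\times\{0\}$, so by the tube lemma (using the compactness of $\mathcal K_0^n$) there exists $t_0\in(0,1]$ with $d_{AW}(A,h(A,t_0))<\varepsilon$ uniformly in $A$. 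Taking $f:=h_{t_0}$ gives the desired $\varepsilon$-approximation with image in $\mathcal K_{(0),b}^n=X\setminus Y$, so $Y$ is a $Z$-set.

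With this established, Fact~\ref{fc:Torun80} yields that $X=\mathcal K_0^n$ is a $Q$-manifold; being compact (Proposition~\ref{prop:K0AR}(3)) and contractible (Proposition~\ref{prop:K0AR}(2)), it is homeomorphic to $Q$ by Theorem~\ref{thm:vanMillthm758}. The only genuinely delicate point is the $Z$-set verification, and even that turns out to be painless: the contracting homotopy $h$ already pushes every closed convex set into $\mathcal K_{(0),b}^n$ while staying uniformly close to the identity, and the compactness of $\mathcal K_0^n$ converts this into the required uniform $\varepsilon$-approximation.
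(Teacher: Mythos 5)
Your proposal is correct and follows essentially the same route as the paper's own proof: reduce to showing $\mathcal K_0^n$ is a $Q$-manifold via Theorem~\ref{thm:vanMillthm758}, then verify that $\mathcal K_0^n\setminus\mathcal K_{(0),b}^n$ is a $Z$-set using the homotopy $h$ from Proposition~\ref{prop:K0AR}(3) and conclude with Fact~\ref{fc:Torun80}. Your explicit tube-lemma justification of the uniform estimate $d_{AW}(A,h_{t_0}(A))<\varepsilon$ is a detail the paper asserts without proof, so you have, if anything, filled in a small gap rather than left one.
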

\begin{proof}
We already proved that $\mathcal{K}_{0}^n$ is a compact contractible AR (Proposition \ref{prop:K0AR}). Thus, since  $Q$ is the only $Q$-manifold which is both compact and contractible (Theorem \ref{thm:vanMillthm758}), it is enough to show that $\mathcal{K}_{0}^n$ is a $Q$-manifold. To this end, observe that $\mathcal{K}_{(0),b}^n$ is an open set in $\mathcal{K}_0^n$ (Proposition \ref{prop:K0AR}-(1)). Therefore, $\mathcal{K}_0^n\setminus\mathcal{K}_{(0),b}^n$ is closed in $\mathcal{K}_0^n.$ Even more $\mathcal{K}_0^n\setminus\mathcal{K}_{(0),b}^n$ is a $Z$-set. To prove this, observe that the map $h,$ used in the proof of Proposition \ref{prop:K0AR}-(3), is a homotopy such that $h_0$ is the identity map of $\mathcal{K}_{0}^n,$ and $h_t(\mathcal{K}_{0}^n)\subseteq\mathcal{K}_{(0),b}^n$ for all $t\in(0,1].$ Thus, for every $\varepsilon>0,$ we can pick $\delta\in(0,1)$ such that  $d_{AW}\big(A,h_t(A)\big)<\varepsilon$ for all $t\in(0,\delta).$ In consequence, the map $h_t:\mathcal{K}_{0}^n\rightarrow\mathcal{K}_{(0),b}^n,$ $t\in(0,\delta),$ is continuous and $d_{AW}\big(A,h_t(A)\big)<\varepsilon$ for all $A\in\mathcal{K}_{0}^n$. Hence, $\mathcal{K}_0^n\setminus\mathcal{K}_{(0),b}^n$ is a $Z$-set. This, in combination with the fact that $\mathcal{K}_{(0),b}^n\subset\mathcal{K}_0^n$ is a $Q$-manifold (Proposition \ref{prop:K0AR}-(1)), proves that $\mathcal{K}_0^n$ is a $Q$-manifold too (Fact \ref{fc:Torun80}).
\end{proof}

It is worth noting that, as a consequence of Proposition \ref{prop:K0AR} and Theorem \ref{thm:K0cuboHilbert}, $\mathcal{K}_{(0)}^n\subset\mathcal{K}_{0}^n$ is an open set in a Hilbert cube, and therefore it is a $Q$-manifold (Fact \ref{fact: open subset of a Q manifold}). Theorem \ref{thm:K0cuboHilbert} also allows us to determine the homeomorphism type of $(\mathcal{K}_{0,b}^n,d_{AW}).$ In order to do that,  it should be reminded that $Q$ is a homogeneous space (see e.g. \cite[Theorem 6.1.6]{vanMillbook}) and therefore $Q\setminus\{x\}$ and $Q\setminus\{y\}$ are homeomorphic for any $x,y\in Q$. In what follows,  $Q\setminus\{*\}$ denotes the Hilbert cube with a point removed.

\begin{corollary}
\label{cor:topo-strucK0b}
$\mathcal{K}_{0,b}^n$ is homeomorphic to $Q\setminus\{*\}.$
\end{corollary}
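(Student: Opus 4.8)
The plan is to identify the single point of $Q$ that corresponds, under the homeomorphism $\mathcal{K}_0^n\cong Q$ from Theorem~\ref{thm:K0cuboHilbert}, to the ``missing'' boundary between $\mathcal{K}_{0,b}^n$ and the rest of $\mathcal{K}_0^n$. The key structural observation is that $\mathcal{K}_0^n$ decomposes as $\mathcal{K}_{0,b}^n \sqcup \{A \in \mathcal{K}_0^n : A \text{ is unbounded}\}$, and I want to show the unbounded part is a single point from the point of view of the quotient description $Q\setminus\{*\}$. Concretely, I would first argue that $\mathcal{K}_0^n \setminus \mathcal{K}_{0,b}^n$ is \emph{not} a single point (it contains all unbounded closed convex sets through $0$), so the naive guess fails; instead the correct approach is to exhibit a distinguished point whose complement is exactly $\mathcal{K}_{0,b}^n$.

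First I would establish that $\mathcal{K}_{0,b}^n$ is an open subset of $\mathcal{K}_0^n$. By Fact~\ref{fc:proposition35SakYag} the compact sets $\mathcal{K}_b^n$ form an open subset of $\mathcal{K}^n$, and intersecting with the ambient $\mathcal{K}_0^n$ preserves openness; hence $\mathcal{K}_{0,b}^n = \mathcal{K}_b^n \cap \mathcal{K}_0^n$ is open in $\mathcal{K}_0^n$. Consequently its complement $C := \mathcal{K}_0^n \setminus \mathcal{K}_{0,b}^n$ is closed. The heart of the matter is to show that $C$ is a $Z$-set in $\mathcal{K}_0^n$ whose complement is homeomorphic to $Q$ minus a point; but rather than analyzing $C$ directly, the cleanest route uses a \emph{general} theorem on Hilbert cubes: a closed subset $Z \subset Q$ is a $Z$-set if and only if $Q \setminus Z$ is homeomorphic to $Q\setminus\{*\}$ precisely when $Z$ is a single point, which is too restrictive. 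So I would instead invoke the characterization that $Q \setminus \{*\}$ is the unique (up to homeomorphism) $\sigma$-compact, locally compact, contractible $Q$-manifold, and verify these properties for $\mathcal{K}_{0,b}^n$.

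The concrete plan is therefore: (i) show $\mathcal{K}_{0,b}^n$ is a $Q$-manifold, since it is an open subset of the Hilbert cube $\mathcal{K}_0^n$ (Theorem~\ref{thm:K0cuboHilbert}) and open subsets of $Q$-manifolds are $Q$-manifolds (Fact~\ref{fact: open subset of a Q manifold}); (ii) show $\mathcal{K}_{0,b}^n$ is contractible, for instance by adapting the homotopy $F$ from Lemma~\ref{lem:homotopyK0} with $r(t)=\frac{1-t}{t}$, noting that $A \cap r(t)\mathbb{B}$ stays compact for $t>0$ and contracts $\mathcal{K}_{0,b}^n$ to $\{0\}$ within $\mathcal{K}_{0,b}^n$; (iii) identify $\{0\} = \mathbb{R}^n{}^\circ$ via (P6) as the natural ``vertex,'' and realize $\mathcal{K}_0^n$ as the one-point completion of $\mathcal{K}_{0,b}^n$ by a single added point corresponding to the unbounded sets collapsing at infinity. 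The payoff is that $\mathcal{K}_{0,b}^n$ is a contractible $Q$-manifold that sits as the complement of a $Z$-set point inside the compact contractible $Q$-manifold $\mathcal{K}_0^n \cong Q$.

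The main obstacle I anticipate is step (iii): verifying that the complement $\mathcal{K}_0^n \setminus \mathcal{K}_{0,b}^n$ behaves, topologically, like the removal of a single point rather than a larger $Z$-set. The subtlety is that $C$ contains many unbounded sets, so $C$ is genuinely not a singleton as a subset; the correct statement is the homeomorphism type of the \emph{open complement} $\mathcal{K}_{0,b}^n$, and one must appeal to the classification of $Q$-manifolds. I expect the cleanest argument runs: $\mathcal{K}_{0,b}^n$ is an open, hence $\sigma$-compact and locally compact, contractible $Q$-manifold, and by the uniqueness theorem for such manifolds (together with the fact that $Q\setminus\{*\}$ is itself an open contractible $Q$-manifold obtained by deleting a point from the homogeneous space $Q$) we conclude $\mathcal{K}_{0,b}^n \cong Q \setminus \{*\}$. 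Pinning down exactly which classification theorem to cite, and checking that $\mathcal{K}_{0,b}^n$ has the right ``end'' behavior so as to match $Q\setminus\{*\}$ and not some other open $Q$-manifold, is where the real care is needed.
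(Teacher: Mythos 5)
Your steps (i) and (ii) are sound and agree with the paper's setup: $\mathcal{K}_{0,b}^n=\mathcal{K}_0^n\cap\mathcal{K}_b^n$ is open in $\mathcal{K}_0^n\cong Q$ (Fact~\ref{fc:proposition35SakYag} plus Theorem~\ref{thm:K0cuboHilbert}), hence a $Q$-manifold (Fact~\ref{fact: open subset of a Q manifold}), and the homotopy of Lemma~\ref{lem:homotopyK0} with $r(t)=\frac{1-t}{t}$ does restrict to a contraction of $\mathcal{K}_{0,b}^n$ to $\{0\}$ inside $\mathcal{K}_{0,b}^n$. The genuine gap is your final step. The classification theorem you invoke does not exist: it is false that every $\sigma$-compact, locally compact, contractible $Q$-manifold is homeomorphic to $Q\setminus\{*\}$. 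For instance, $Q\times\mathbb{R}$ is a $\sigma$-compact, locally compact, contractible $Q$-manifold (Theorem~\ref{thm: X times Q}), yet it has two ends while $Q\setminus\{*\}\cong Q\times[0,\infty)$ has one, so they are not homeomorphic. Non-compact $Q$-manifolds are simply not classified by homotopy type; only in the compact case does contractibility determine the homeomorphism type (Theorem~\ref{thm:vanMillthm758}, which is what the paper uses for $\mathcal{K}_0^n$ itself). So the issue you defer at the end --- ``checking that $\mathcal{K}_{0,b}^n$ has the right end behavior'' --- is not a technical footnote; it is the entire content of the corollary, and neither your one-point-completion heuristic in (iii) (which, as you yourself note, fails because $\mathcal{K}_0^n\setminus\mathcal{K}_{0,b}^n$ is a large set, not a point) nor contractibility can supply it.

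The paper closes exactly this gap by exhibiting a proper cell-like map onto $[0,\infty)$: it takes $\nu:(\mathcal{K}_{0,b}^n,d_H)\to[0,\infty)$, $\nu(A)=\max_{a\in A}\|a\|$. Continuity is cited from the literature, surjectivity is clear since $\nu(r\mathbb{B})=r$, properness follows from the Blaschke selection theorem, and each fiber $\nu^{-1}(r)$ is contractible via the explicit homotopy $f(A,t)=(1-t)A+tr\mathbb{B}$, which stays inside $\nu^{-1}(r)$. Thus $\nu$ is a CE-map, so by Edwards' theorem (Theorem~\ref{thm:ChapmanThm431}) the map $\nu\times id_Q$ is a near homeomorphism, giving $\mathcal{K}_{0,b}^n\times Q\cong[0,\infty)\times Q$; the stability theorem (Theorem~\ref{thm:ChapmanThm151}) then yields $\mathcal{K}_{0,b}^n\cong\mathcal{K}_{0,b}^n\times Q\cong Q\times[0,\infty)\cong Q\setminus\{*\}$, the last homeomorphism being the standard fact you also cite. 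In short: the end structure of $\mathcal{K}_{0,b}^n$ is controlled by mapping it properly, with contractible fibers, onto the ray $[0,\infty)$; without this (or an equivalent proper-homotopy argument) your outline cannot distinguish $\mathcal{K}_{0,b}^n$ from, say, $Q\times\mathbb{R}$, and the proof does not go through.
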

\begin{proof}
It is a very well-known result that $Q\setminus\{*\}$ and $Q\times [0,\infty)$ are homeomorphic spaces (see e.g. the proof of \cite[Theorem 12.2]{Chapmanbook}). Thus, in order to achieve our goal, it is enough to prove that $(\mathcal{K}_{0,b}^n, d_{AW})$ is homeomorphic to $Q\times [0,\infty)$.
Recall that $\mathcal{K}_{b}^n$  is an open set in $(\mathcal{K}^n,d_{AW})$ (Fact \ref{fc:proposition35SakYag}) and 
 $\mathcal{K}_{0,b}^n=\mathcal{K}_0^n\cap\mathcal{K}_{b}^n.$
 Thus, $\mathcal{K}_{0,b}^n$ is an open set in $\mathcal{K}_0^n$. Hence, by Theorem \ref{thm:K0cuboHilbert}, 
$\mathcal{K}_{0,b}^n$ is a $Q$-manifold.
Moreover, since the metrics $d_{AW}$ and $d_H$ generate the same topology on $\mathcal{K}_{b}^n$ (Fact \ref{fact:d_H y d_AW coincinden en K_b}), both $(\mathcal{K}_{0,b}^n,d_{H})$ and $(\mathcal{K}_{0,b}^n,d_{AW})$ are homeomorphic $Q$-manifolds. Then, to prove the result, it is enough to show that $(\mathcal{K}_{0,b}^n,d_{H})$ is homeomorphic to $Q\times [0,\infty)$. 
Consider the map  
$\nu:(\mathcal{K}_{0,b}^n,d_{H})\rightarrow[0,\infty)$ defined as $\nu(A)=\max_{a\in A}\|a\|,$ for $A\in\mathcal{K}_b^n.$ In \cite[Lemma 4.2]{AntonyanNatalia}, it was shown that $\nu$ is a continuous map. Since $\nu(r\mathbb{B})=r$ for every $r\in[0,\infty),$ we also know that $\nu$ is surjective. Furthermore, if $\mathcal{W}\subset[0,\infty)$ is a compact set, then $\nu^{-1}(\mathcal{W})\subset\mathcal{K}_{0,b}^n$ is a closed bounded set. So, by the Blaschke Selection Theorem (\cite[Theorem 1.8.6]{Schneider1993}), $\nu^{-1}(\mathcal{W})$ must be compact. In addition, $\nu^{-1}(r)$ is contractible for all $r\in[0,\infty).$ Indeed, let $f:\nu^{-1}(r)\times[0,1]\rightarrow\nu^{-1}(r)$ be  defined as $f(A,t)=(1-t)A+tr\mathbb{B},$ for $t\in[0,1]$ and $A\in\nu^{-1}(r).$ If $A\in\nu^{-1}(r)$ and $a_0\in A$ is such that $\nu(A)=\|a_0\|=r,$ then 
$$
\left\Vert(1-t)a_0+\frac{rt}{\|a_0\|}a_0\right\Vert=r.
$$
Hence, $f(A,t)\in\nu^{-1}(r)$ for all $t\in[0,1],$ which proves that $f$ is well-defined. The  continuity of $f$ follows directly from \cite[Theorem 2.7.5]{WebsterR} (see also \cite[\textsection 1.8]{Schneider1993}). Clearly, $f_0$ is the identity map of $\nu^{-1}(r)$ and $f_1(A)=r\mathbb{B}$ for all $A\in\nu^{-1}(r).$ Therefore, $f$ is a homotopy and $\nu^{-1}(r)$ is a contractible space. We have thus shown that $\nu$ is a CE-map. Hence, by Theorem \ref{thm:ChapmanThm431}, $Q\times(\mathcal{K}_{0,b}^n,d_{H})$ is homeomorphic with $Q\times[0,\infty).$
However, since $(\mathcal{K}_{0,b}^n,d_{H})$ and $Q\times(\mathcal{K}_{0,b}^n,d_{H})$ are homemorphic too
(Theorem \ref{thm:ChapmanThm151}), then $(\mathcal{K}_{0,b}^n,d_{H})$ is homeomorphic with
$Q\times[0,\infty)$, as desired.  
\end{proof}

\section{Topology of the polar involution on $\mathcal K^n_0$}
\label{sec:K0n Z2-AR}

 In Section~\ref{sec:prelim} we have recalled some fundamental properties of the polar set $A^\circ$.
Notice that properties $(P1)$ and $(P8)$ guarantee that  the polar mapping
\begin{align*}
\alpha:\mathcal{K}_0^n&\rightarrow\mathcal{K}_0^n\\
\alpha(A)&= A^\circ
\end{align*}
is a well-defined bijective map with the property that  $\alpha(\alpha(A))=A,$ for all $A\in\mathcal{K}_0^n.$ 
Moreover, by property (P5), $\alpha$ has a unique fixed point: the Euclidean ball $\mathbb{B}$. 
Furthermore, in \cite[Theorem 7.2]{Wijsman1966}, R. Wijsman shows the following:
\begin{theorem}
\label{thm:Wijsman}
The polar mapping $\alpha:\mathcal{K}_0^n\rightarrow\mathcal{K}_0^n$
is continuous with respect to the Wijsman topology $\tau_{W}$ on $\mathcal{K}_0^n.$
\end{theorem}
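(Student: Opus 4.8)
The plan is to exploit the fact (Fact~\ref{fact:topologies are the same}) that on $\mathcal K^n$ the Wijsman, Fell and Attouch--Wets topologies coincide, so that $\tau_W$ is metrizable and it suffices to prove \emph{sequential} continuity of $\alpha$. Moreover, since $\mathbb R^n$ is locally compact, unwinding the subbasis of $\tau_F$ shows that $A_k\to A$ means precisely $A\subseteq\mathrm{Li}\,A_k$ (for each $a\in A$ there are $a_k\in A_k$ with $a_k\to a$) together with $\mathrm{Ls}\,A_k\subseteq A$ (every cluster point of a selection $a_k\in A_k$ lies in $A$). Thus, given $A_k\to A$, I would prove $A_k^\circ\to A^\circ$ by establishing the two inclusions $\mathrm{Ls}\,A_k^\circ\subseteq A^\circ$ and $A^\circ\subseteq\mathrm{Li}\,A_k^\circ$, which together with $\mathrm{Li}\subseteq\mathrm{Ls}$ force $\mathrm{Li}\,A_k^\circ=\mathrm{Ls}\,A_k^\circ=A^\circ$.

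The first inclusion is straightforward. If $y\in\mathrm{Ls}\,A_k^\circ$, pick a subsequence $y_{k_j}\in A_{k_j}^\circ$ with $y_{k_j}\to y$. For any $a\in A$, lower convergence supplies $a_k\in A_k$ with $a_k\to a$; since $\langle a_{k_j},y_{k_j}\rangle\le1$ for all $j$, passing to the limit yields $\langle a,y\rangle\le1$. As $a\in A$ was arbitrary, $y\in A^\circ$. Notice this argument uses only the definition of the polar and works even for unbounded sets.

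The second inclusion, $A^\circ\subseteq\mathrm{Li}\,A_k^\circ$, is where the real difficulty lies, because points (or whole rays) of the $A_k$ may escape to infinity, so support functions need not converge and the naive guess that $(1-\varepsilon)y$ eventually lies in $A_k^\circ$ is false. I would argue by contradiction via metric projections: fix $y\in A^\circ$, let $y_k$ be the nearest point to $y$ in the nonempty closed convex set $A_k^\circ$, and suppose $d(y,A_k^\circ)=\|y-y_k\|\ge\delta>0$ along a subsequence. The variational characterization of the projection produces a unit vector $w_k:=(y-y_k)/\|y-y_k\|$ with $\langle w_k,p\rangle\le\langle w_k,y\rangle-\delta=:c_k$ for every $p\in A_k^\circ$; since $0\in A_k^\circ$ one gets $c_k\ge0$. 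The Bipolar Theorem (P8) then converts this bound on $A_k^\circ$ into information about $A_k$ itself: if $c_k>0$ then $\langle w_k/c_k,p\rangle\le1$ for all $p\in A_k^\circ$, whence $w_k/c_k\in(A_k^\circ)^\circ=A_k$; if $c_k=0$ then $\langle sw_k,p\rangle\le0\le1$ for all $s\ge0$, whence the whole ray $\mathbb R_{\ge0}w_k$ lies in $A_k$.

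Passing to a further subsequence with $w_k\to w$ on the unit sphere and $c_k\to c\ge0$, I would split into two cases. If $c>0$, the points $w_k/c_k$ converge to $w/c$, which by $\mathrm{Ls}\,A_k\subseteq A$ lies in $A$; then $y\in A^\circ$ forces $\langle w,y\rangle\le c=\langle w,y\rangle-\delta$, a contradiction. If $c=0$, then using $0\in A_k$ and convexity the segments $[0,w_k/c_k]$ (or the rays, when $c_k=0$) show that $sw_k\in A_k$ for each fixed $s\ge0$ and all large $k$; hence $sw\in A$ for all $s\ge0$, i.e.\ $\mathbb R_{\ge0}w\subseteq A$, and $y\in A^\circ$ forces $\langle w,y\rangle\le0$, contradicting $\langle w,y\rangle=\lim(c_k+\delta)=\delta>0$. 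Either way one reaches a contradiction, so $d(y,A_k^\circ)\to0$ and $y\in\mathrm{Li}\,A_k^\circ$. The crux of the whole argument, and the step I expect to cost the most care, is precisely this control of escaping mass, which is rescued by the hypothesis $0\in A_k$: it guarantees that any sequence of points of $A_k$ running to infinity generates an honest recession ray of the limit $A$, while the Bipolar Theorem is exactly what transfers the separation inequality from $A_k^\circ$ back to $A_k$.
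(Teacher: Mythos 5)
Your proposal is correct, but it is not comparable to the paper's own treatment for a simple reason: the paper does not prove this theorem at all; it quotes it as Theorem 7.2 of Wijsman's 1966 paper (where the continuity of the polar operation is established via distance/support-function estimates tied directly to the definition of Wijsman convergence). What you have written is therefore a genuinely different, self-contained argument. Your route first uses Fact~\ref{fact:topologies are the same} to reduce to sequential continuity in a metrizable topology, then replaces Wijsman convergence by its Kuratowski formulation ($A\subseteq\mathrm{Li}\,A_k$ and $\mathrm{Ls}\,A_k\subseteq A$), which is legitimate since Fell convergence of closed sets coincides with Kuratowski convergence in a locally compact second countable space (this standard equivalence is the one external ingredient you should cite, e.g.\ from Beer's book). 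The upper inclusion $\mathrm{Ls}\,A_k^\circ\subseteq A^\circ$ is immediate, and your lower inclusion is handled correctly: the metric projection onto $A_k^\circ$ together with the variational inequality produces the separating vectors $w_k$ and levels $c_k\geq0$, the Bipolar Theorem (P8) transports the separation back to $A_k$ (as the point $w_k/c_k$ or the ray $\mathbb R_{\geq0}w_k$), and the two limit cases $c>0$ and $c=0$ each contradict $y\in A^\circ$ via $\mathrm{Ls}\,A_k\subseteq A$ — the $c=0$ case correctly exploits $0\in A_k$ and convexity to get $sw_k\in A_k$ for each fixed $s$ and large $k$. What your approach buys is a proof readable entirely inside the paper's toolkit (properties (P1)--(P8) and Fact~\ref{fact:topologies are the same}) without consulting Wijsman's original analysis; what the citation buys the authors is brevity and a statement valid verbatim in Wijsman's broader framework of convex sets, cones and functions.
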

This theorem, in combination with Fact~\ref{fact:topologies are the same} (cf. \cite[Theorem 3.1.4]{Beer1993}),  yields the following.
\begin{fact}\label{fact: continuidad polar daw }
The polar map $\alpha$ is continuous with respect to $d_{AW}.$ 
\end{fact}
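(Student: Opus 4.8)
The plan is to deduce this immediately from the two results just recalled, namely Wijsman's Theorem~\ref{thm:Wijsman} together with the coincidence of topologies in Fact~\ref{fact:topologies are the same}. The guiding observation is that continuity of a self-map depends only on the topology of its domain and codomain, not on the particular metric nor on the particular family of subbasic open sets used to generate that topology. Hence it suffices to produce a single topology on $\mathcal{K}_0^n$ for which $\alpha$ is continuous and which coincides with the topology induced by $d_{AW}$.

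First I would invoke Fact~\ref{fact:topologies are the same}, which states that on $\mathcal{K}^n$ the Wijsman topology $\tau_W$ and the Attouch--Wets topology $\tau_{AW}$ coincide. Passing to the subspace $\mathcal{K}_0^n\subset\mathcal{K}^n$, the subspace topologies induced by $\tau_W$ and by $\tau_{AW}$ therefore also coincide; in particular, the subspace topology induced by $\tau_{AW}$ is precisely the topology determined by the metric $d_{AW}$. Next, Theorem~\ref{thm:Wijsman} asserts that the polar mapping $\alpha:\mathcal{K}_0^n\rightarrow\mathcal{K}_0^n$ is continuous with respect to $\tau_W$. Combining these two facts, and using that $\tau_W=\tau_{AW}$ on $\mathcal{K}_0^n$ (on both copies, as domain and as codomain), we conclude that $\alpha$ is continuous with respect to $d_{AW}$, which is exactly the assertion.

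There is no genuine obstacle here: the entire analytic content is already contained in the cited theorems, and the only point that requires comment is the elementary fact that equality of two topologies on the ambient space $\mathcal{K}^n$ descends to equality of the induced subspace topologies on $\mathcal{K}_0^n$. Once this descent is noted, $d_{AW}$-continuity of $\alpha$ is literally the same statement as its $\tau_W$-continuity, so no further argument is needed.
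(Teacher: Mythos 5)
Your proposal is correct and is essentially identical to the paper's own argument: the paper derives this fact by combining Theorem~\ref{thm:Wijsman} (Wijsman's continuity of $\alpha$ with respect to $\tau_W$) with Fact~\ref{fact:topologies are the same} (coincidence of $\tau_W$ and $\tau_{AW}$ on $\mathcal{K}^n$). Your additional remark that the equality of topologies descends to the subspace $\mathcal{K}_0^n$ is a minor but legitimate clarification of the same reasoning.
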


 These observations are summarized in the next proposition.
\begin{proposition}
\label{prop:polar-continuous}
The polar map $\alpha:(\mathcal{K}_0^n, d_{AW})\rightarrow(\mathcal{K}_0^n, d_{AW})$ is a  based-free involution. Moreover, $\mathbb B\in\mathcal K^n_0$ is the only fixed point of $\alpha$.
\end{proposition}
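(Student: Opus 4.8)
The plan is to verify the three claims of Proposition~\ref{prop:polar-continuous}---that $\alpha$ is continuous, that it is an involution, and that $\mathbb{B}$ is its unique fixed point---by assembling facts already established in the excerpt, since each ingredient has essentially been recorded. Continuity is the only analytic assertion, and I would dispose of it immediately by invoking Fact~\ref{fact: continuidad polar daw }: the polar map is continuous with respect to $d_{AW}$, this being a consequence of Wijsman's Theorem~\ref{thm:Wijsman} together with the coincidence of the Wijsman, Fell and Attouch-Wets topologies on $\mathcal{K}^n$ (Fact~\ref{fact:topologies are the same}). No estimate needs to be reproduced here.

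Next I would treat the algebraic structure. Property $(P1)$ guarantees that $A^\circ\in\mathcal{K}_0^n$ whenever $A$ is a nonempty subset of $\mathbb{R}^n$, so $\alpha$ maps $\mathcal{K}_0^n$ into itself and is well-defined. The Bipolar Theorem $(P8)$ states that $(A^\circ)^\circ=A$ for every $A\in\mathcal{K}_0^n$, which is precisely the identity $\alpha(\alpha(A))=A$; hence $\alpha\circ\alpha=\mathrm{id}_{\mathcal{K}_0^n}$, so $\alpha$ is a bijection that coincides with its own inverse. Combined with the continuity just cited, this shows $\alpha$ is an involution in the sense defined in the introduction (a continuous self-map squaring to the identity).

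For the fixed points I would appeal directly to property $(P5)$, which asserts that $A^\circ=A$ if and only if $A=\mathbb{B}$. Reading this as a statement about $\alpha$, the equation $\alpha(A)=A$ holds exactly when $A=\mathbb{B}$, so $\mathbb{B}$ is a fixed point and it is the only one. Consequently $\alpha$ restricted to $\mathcal{K}_0^n\setminus\{\mathbb{B}\}$ is free, and $\alpha$ is based-free according to Definition~\ref{def:base free action}.

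There is, in truth, no serious obstacle: the proposition is a bookkeeping summary whose three components are each furnished by an earlier numbered property or fact. The only point demanding the slightest care is making sure the notion of ``involution'' from the introduction carries the continuity requirement, so that one must genuinely cite Fact~\ref{fact: continuidad polar daw } rather than merely the set-theoretic identity $(P8)$; once continuity, the bipolar identity, and the fixed-point characterization $(P5)$ are lined up, the statement follows at once.
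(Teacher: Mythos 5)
Your proposal is correct and follows exactly the paper's own route: the paper likewise assembles this proposition from property (P1) and the Bipolar Theorem (P8) for well-definedness and the involution identity, property (P5) for the unique fixed point $\mathbb{B}$, and Wijsman's Theorem~\ref{thm:Wijsman} combined with Fact~\ref{fact:topologies are the same} for continuity with respect to $d_{AW}$. Nothing is missing, and your remark that continuity is the one point requiring a genuine citation matches the paper's treatment.
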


After Theorem~\ref{thm:K0cuboHilbert}, we know that $\mathcal{K}_0^n$ is homeomorphic to the Hilbert cube $Q$. This fact, in combination with Proposition \ref{prop:polar-continuous}, shows that the polar mapping determines a based-free involution on a Hilbert cube. Bearing in mind Anderson's problem on the characterization of all based-free involutions on $Q$, it is then natural to ask for the relation between the involution $\sigma:Q\rightarrow Q,$ given by $\sigma(x)=-x,$ $x\in Q,$ and $\alpha:\mathcal{K}_0^n\rightarrow\mathcal{K}_0^n$. This section is dedicated to prove Theorem~\ref{Theo:main 1}, namely, that $\alpha$ and $\sigma$ are conjugate.

To achieve this, we will rely on the theory of $G$-spaces that we introduced in Subsection~\ref{subsec:G-spaces}.  We start by noticing that every involution $\beta:X\to X$ on a topological space $X$, induces a continuous action of the  group $\mathbb Z_2=\{-1,1\}$ as follows
$$(1,x)\to x\;\;\text{ and }\;\;(-1,x)\to \beta(x).$$
Observe that the $\mathbb Z_2$-action is based-free if and only if the involution $\beta$ is based-free. 
In this situation, the orbit space $X/\mathbb Z_2$ is precisely the quotient space $X/\beta$ induced by the involution. 
In particular, the involution $\alpha$ turns $\mathcal K^n_0$ into a based-free $\mathbb Z_2$-space (see Subsection \ref{subsec:G-spaces}). 

In order to prove Theorem~\ref{Theo:main 1}, by Theorem~\ref{thm:WestWongThm1}, it is enough to show that the quotient space
$\mathcal K^n_0/\alpha$ induced by the polar mapping is an $\mathrm{AR}$.
However, by \cite[Theorem 8]{Antonyan1990},
$\mathcal K^n_0/\alpha$ is an $\mathrm{AR}$ provided that  $\mathcal K^n_0$ is a $\mathbb Z_2$-$\mathrm{AR}$. Let us recall what this means.

 A set $S\subset X$ is called \textit{invariant}, if $hx\in S$ for every $(h,x)\in G\times S$.
We say that a metrizable $G$-space X is a \textit{$G$-equivariant absolute neighborhood retract} (denoted by $G$-$\mathrm{ANR}$) if for any metrizable $G$-space $Z$ containing $X$ as an invariant closed subset, there
exists an invariant neighborhood $U$ of $X$ in $Z$ and a $G$-retraction $r : U\rightarrow X$. If we can always take $U = Z$, then we say $X$ is a \textit{$G$-equivariant absolute retract} (denoted by $G$-$\mathrm{AR}$).

To prove that $\mathcal K^n_0$ is a $\mathbb Z_2$-$\mathrm{AR}$, we will use the following result of S. Antonyan (see \cite[Corollary 3.6]{Antonyan2005}).

\begin{theorem}\label{the: Antonyan G contactible} Let $X\in \mathrm{AR}$ be a metrizable based-free $G$-space, where $G$ is a compact Lie group. If $X$ is $G$-contractible, then $X$
is a $G$-$\mathrm{AR}$.
\end{theorem}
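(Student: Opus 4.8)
The plan is to split the argument into two independent stages. First I would show, using only that $X$ is an $\mathrm{AR}$ and that the $G$-action is based-free, that $X$ is a $G$-$\mathrm{ANR}$; then I would upgrade this to the $G$-$\mathrm{AR}$ property using $G$-contractibility. Recall that $G$-contractibility here means that the identity map of $X$ is equivariantly homotopic to the constant map at the (necessarily unique) fixed point $x_0$, i.e.\ there is a $G$-homotopy $H:X\times[0,1]\to X$ with $G$ acting trivially on $[0,1]$, $H_0=\mathrm{id}_X$ and $H_1\equiv x_0$; note that equivariance forces the center of any such contraction to be a fixed point, so in the based-free case it must be $x_0$. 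The second stage is precisely the equivariant analogue of property (R4): given any metrizable $G$-space $Z$ containing $X$ as an invariant closed subset and a $G$-retraction $r:U\to X$ from an invariant neighborhood, one extends $r$ to all of $Z$ by running the standard ``null-homotope the complement'' argument of the non-equivariant case, making every choice equivariant by composing with the $G$-contraction $H$. Since $H$ commutes with the action, no choice breaks equivariance, so $X$ becomes a $G$-$\mathrm{AR}$.

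The real content lies in the first stage. The based-free hypothesis forces the action to have exactly two orbit types: the fixed orbit $\{x_0\}$, with isotropy $G$, and the free orbits, with trivial isotropy. In particular there are only finitely many orbit types, and for every closed subgroup $H\le G$ the fixed-point set $X^H$ is either $X$ itself (when $H=\{e\}$) or the single point $\{x_0\}$ (when $H\ne\{e\}$), since a point other than $x_0$ has trivial stabilizer. Because $X\in\mathrm{AR}\subset\mathrm{ANR}$ and a one-point space is an $\mathrm{ANR}$, \emph{all} of these fixed-point sets are $\mathrm{ANR}$'s. I would then invoke the equivariant characterization of $G$-$\mathrm{ANR}$'s for compact Lie groups---in the spirit of Jaworowski's theorem, but in the metrizable, possibly infinite-dimensional, setting---asserting that a $G$-space with finitely many orbit types is a $G$-$\mathrm{ANR}$ as soon as every $X^H$ is an $\mathrm{ANR}$. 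On the free part $X\setminus\{x_0\}$ this is transparent: the slice theorem makes the projection $X\setminus\{x_0\}\to(X\setminus\{x_0\})/G$ a locally trivial principal $G$-bundle, and since $G$ is a compact Lie group (hence an $\mathrm{ANR}$) and $X\setminus\{x_0\}$ is an open subset of the $\mathrm{AR}$ $X$ (hence an $\mathrm{ANR}$ by (R1)), equivariant neighborhood extensions over the free part can be assembled from local trivializations together with ordinary (non-equivariant) extensions on the orbit space.

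The step I expect to be the main obstacle is the local behaviour at the unique fixed point $x_0$. There the bundle picture degenerates, and because $X$ is infinite-dimensional one cannot replace a neighborhood of $x_0$ by a linear slice or a finite cone, so the classical Jaworowski argument (which relies on finite dimensionality) does not apply directly. This is exactly the technical core that Antonyan's machinery resolves: one must produce, equivariantly and over an invariant neighborhood of $x_0$, a neighborhood extension of any prescribed $G$-map into $X$, and the natural device for organizing this is once again the equivariant contraction toward $x_0$. Thus, although logically the $G$-contractibility is only invoked for the final upgrade, in practice it is the structure near the fixed point that demands the most care, and I would expect to spend the bulk of the proof securing the $G$-$\mathrm{ANR}$ property there before the (R4)-type conclusion becomes a formality.
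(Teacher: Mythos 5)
First, a point of comparison: the paper does not prove this statement at all --- it is quoted from S.\ Antonyan \cite[Corollary 3.6]{Antonyan2005} and used as a black box --- so there is no internal proof to measure yours against; your proposal must stand on its own. It does not, and the gap sits exactly where you suspect it does. Your stage 1 rests on an ``equivariant characterization of $G$-$\mathrm{ANR}$'s \ldots in the metrizable, possibly infinite-dimensional, setting,'' asserting that finitely many orbit types plus all fixed-point sets $X^H$ being $\mathrm{ANR}$'s forces $X$ to be a $G$-$\mathrm{ANR}$. No such theorem exists: Jaworowski's characterization is proved only for \emph{finite-dimensional} (separable metric) $G$-spaces, and its extension to infinite dimensions is not a known result but essentially an open problem. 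Note what your stage 1 would give if true as stated, since it uses only that $X$ is an $\mathrm{AR}$ and that the action is based-free: \emph{every} based-free involution $\beta$ on the Hilbert cube $Q$ would make $Q$ a $\mathbb{Z}_2$-$\mathrm{ANR}$ (as $Q$ is an $\mathrm{AR}$ and $Q^{\mathbb{Z}_2}=\{x_0\}$ is an $\mathrm{AR}$), with no contractibility hypothesis whatsoever, and hence by Antonyan's orbit-space theorem \cite{Antonyan1990} the orbit space $Q/\beta$ would always be an $\mathrm{ANR}$. This is far stronger than anything known and is intimately tied to Anderson's problem, which the paper stresses is open. So the characterization you invoke cannot be used.

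You do recognize that the unique fixed point is the obstruction, but your resolution --- ``this is exactly the technical core that Antonyan's machinery resolves'' --- is circular: the statement you are proving \emph{is} Antonyan's result, so you cannot appeal to it, or to his unspecified ``machinery,'' to fill the hole. What is missing is an actual construction of equivariant neighborhood extensions (equivalently, $G$-retractions onto $X$ from invariant neighborhoods in an ambient metrizable $G$-space) near $x_0$, and this is precisely where the hypothesis of $G$-contractibility must enter in an essential way. Consequently, your framing of $G$-contractibility as merely the ingredient for a final ``(R4)-type upgrade'' misreads the structure of the theorem: the equivariant analogue of (R4) (a $G$-contractible $G$-$\mathrm{ANR}$ is a $G$-$\mathrm{AR}$, for $G$ a compact Lie group) is indeed standard, so your stage 2 is fine, but the contractibility is needed \emph{before} that point, to obtain the $G$-$\mathrm{ANR}$ property at all. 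The free-part analysis (slice theorem, principal $G$-bundle over an $\mathrm{ANR}$ orbit space) is plausible as far as it goes; the proof simply collapses at $x_0$, which carries all of the difficulty.
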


In the previous theorem,  a $G$-space $X$ is called \textit{$G$-contractible} if there exists a $G$-fixed point $x_0\in X$ and a homotopy $\varphi:X\times[0,1]\rightarrow X$ such that $\varphi_0$ is the identity on $X$,  $\varphi_t$ is a $G$-map for every $t\in[0,1]$ and $\varphi_1(x)=x_0$ for every $x\in X$. 

Notice that $\mathcal K^n_0$ is indeed a based-free $\mathbb Z_2$-space, and $\mathbb B$ is the unique fixed point.  Furthermore,  $\mathcal K^n_0\in \mathrm{AR}$ (Proposition~\ref{prop:K0AR}). Thus,  if we translate Theorem~\ref{the: Antonyan G contactible} to our situation, we obtain the following remark that summarizes all the previous paragraphs. 

\begin{remark}\label{rem: prueba Main 1}
$\mathcal{K}^n_0$ is $\mathbb Z_2$-contractible if
 there exist a homotopy $\varphi:\mathcal{K}^n_0\times[0,1]\to\mathcal K^{n}_0$ such that $\varphi_0(A)=A$,  $\varphi_1(A)=\mathbb B^n$,  and $\varphi_t(A^\circ)=\varphi_t(A)^\circ$ for every $A\in\mathcal K^n_0$ and every $t\in [0,1]$.  Furthermore, if $\mathcal{K}^n_0$ is $\mathbb Z_2$-contractible, then the following statements hold.
\begin{enumerate}
    \item $\mathcal{K}^n_0$ is a $\mathbb Z_2$-$\mathrm{AR}$.
    \item $\mathcal{K}^n_0/\alpha$ is an $\mathrm{AR}$.
    \item Theorem~\ref{Theo:main 1} is true. 
\end{enumerate}
\end{remark}

In the following pages, we will prove that $\mathcal K^n_0$ is indeed $\mathbb Z_2$-contractible. 

Throughout the rest of this section we will make extensive use of the map  $\psi:\mathcal{K}_0^n\times[0,1]\rightarrow\mathcal{K}_0^n$ defined as
\begin{equation}\label{eq:funcion psi}
    \psi(A,t):=\begin{cases}
A\quad\text{if } t=0, \\
(A+t\mathbb{B})\cap\frac{1-t}{t}\mathbb{B}\quad\text{if }t\in(0,1].
\end{cases}
\end{equation}

\begin{lemma}
\label{lem:psimap}
The map $\psi:\mathcal{K}_0^n\times[0,1]\rightarrow\mathcal{K}_0^n$ defined in (\ref{eq:funcion psi})
is continuous. Moreover, for every $t\in \left[\frac{\sqrt5-1}{2}, 1\right)$ and every $A\in\mathcal{K}_0^n,$  $\psi(A,t)=\frac{1-t}{t}\mathbb{B}.$ 
\end{lemma}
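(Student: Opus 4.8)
The plan is to prove the two assertions of Lemma~\ref{lem:psimap} separately, beginning with the continuity claim and then verifying the explicit formula on the terminal time interval.

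\textbf{Continuity of $\psi$.} First I would observe that $\psi$ is built by composing the two basic homotopies whose continuity has already been established. Indeed, consider the map $H(A,t)=A+t\mathbb{B}$ from Lemma~\ref{lem:homotopy H} and the intersection-with-a-ball construction from Lemma~\ref{lem:homotopyK0} with the choice $r(t)=\frac{1-t}{t}$. For $t\in(0,1)$ the map $r(t)=\frac{1-t}{t}$ is continuous, strictly positive, satisfies $\lim_{t\to 0^+}r(t)=\infty$ and $r(1)=0$, so Lemma~\ref{lem:homotopyK0} applies to the family $K\mapsto K\cap r(t)\mathbb{B}$. Thus on $(0,1]$ I can write $\psi(A,t)=F\bigl(H(A,t),t\bigr)$ where $F$ is the map of Lemma~\ref{lem:homotopyK0}; continuity of $\psi$ on $\mathcal{K}_0^n\times(0,1]$ then follows from the continuity of $H$, of $F$, and of the diagonal map $(A,t)\mapsto(H(A,t),t)$. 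The only delicate point is continuity at $t=0$, where $\psi(A,0)=A$. Here I would argue directly with the Attouch-Wets metric: fix $A$ and $\varepsilon>0$, choose the integer $j$ with $\varepsilon\in(\tfrac{1}{j+1},\tfrac{1}{j}]$, and use that as $t\to 0^+$ the radius $r(t)=\frac{1-t}{t}\to\infty$ while the added ball $t\mathbb{B}$ shrinks to $\{0\}$. Concretely, for $t$ small enough that $r(t)>j+1$ and $t<\tfrac{\varepsilon}{2}$, one has $\psi(A,t)\cap j\mathbb{B}=(A+t\mathbb{B})\cap j\mathbb{B}$, and combining $d_H(A\cap j\mathbb{B},(A+t\mathbb{B})\cap j\mathbb{B})\le t$ with the control on nearby $K$ via Lemma~\ref{lem:car-dAW}-(5), I obtain $d_{AW}(\psi(A,t),A)<\varepsilon$ and likewise for perturbations of $A$. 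This mirrors exactly the $s=0$ analysis carried out in the proof of Lemma~\ref{lem:homotopyK0}.

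\textbf{The explicit formula.} For the second assertion I must show that whenever $t\in\left[\frac{\sqrt5-1}{2},1\right)$, the ball $\frac{1-t}{t}\mathbb{B}$ is already contained in $A+t\mathbb{B}$, so that the intersection $(A+t\mathbb{B})\cap\frac{1-t}{t}\mathbb{B}$ collapses to $\frac{1-t}{t}\mathbb{B}$. Since $0\in A$ for every $A\in\mathcal{K}_0^n$, we always have $t\mathbb{B}=\{0\}+t\mathbb{B}\subseteq A+t\mathbb{B}$, hence $\frac{1-t}{t}\mathbb{B}\subseteq A+t\mathbb{B}$ holds as soon as $\frac{1-t}{t}\le t$, independently of $A$. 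The inequality $\frac{1-t}{t}\le t$ is equivalent to $1-t\le t^2$, i.e. $t^2+t-1\ge 0$, whose positive root is exactly $t_0=\frac{\sqrt5-1}{2}$. Therefore for all $t\ge t_0$ one has $\frac{1-t}{t}\mathbb{B}\subseteq t\mathbb{B}\subseteq A+t\mathbb{B}$, and consequently $\psi(A,t)=\frac{1-t}{t}\mathbb{B}$ for every $A$, as claimed.

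\textbf{Main obstacle.} The geometric reduction in the second part is essentially a one-line computation once one notices that containment of the origin forces $t\mathbb{B}\subseteq A+t\mathbb{B}$; the golden-ratio threshold is simply the place where $\frac{1-t}{t}$ drops below $t$. The genuinely technical step is the continuity at $t=0$, where the two competing effects (the blowing-up radius $\frac{1-t}{t}$ and the vanishing dilation $t\mathbb{B}$) must be balanced. I expect this to be the main obstacle, and I would handle it exactly as in Lemma~\ref{lem:homotopyK0}, by passing to truncations $\,\cdot\cap j\mathbb{B}$ and invoking the equivalence in Lemma~\ref{lem:car-dAW}-(5) to reduce everything to estimates in the Hausdorff metric on the fixed compact window $j\mathbb{B}$.
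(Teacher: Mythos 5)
Your proposal is correct and follows essentially the same route as the paper: continuity via the factorization $\psi(A,t)=F\bigl(H(A,t),t\bigr)$ through Lemmas~\ref{lem:homotopyK0} and \ref{lem:homotopy H}, and the inclusion $\frac{1-t}{t}\mathbb{B}\subseteq t\mathbb{B}\subseteq A+t\mathbb{B}$ (valid exactly when $t^2+t-1\geq 0$) for the explicit formula. The only difference is that your separate argument at $t=0$ is unnecessary: since $F(K,0)=K$ and $H(A,0)=A$, the factorization $\psi(A,t)=F\bigl(H(A,t),t\bigr)$ holds on all of $\mathcal{K}_0^n\times[0,1]$, and Lemma~\ref{lem:homotopyK0} already establishes continuity of $F$ at points $(K,0)$, so the composition argument alone covers the case $t=0$.
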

\begin{proof}
First, notice that the continuity of the map $H(A,t)=A+t\mathbb{B},$ $A\in\mathcal{K}_0^n$ and $t\in[0,1],$ was proved in Lemma~\ref{lem:homotopy H}.
Also, observe that if we let $r(t)=\frac{1-t}{t}$ be the map of Lemma \ref{lem:homotopyK0}, then the associated map $F$ is such that $\psi(A,t)=F(H(A,t),t)$ for every $A\in\mathcal{K}_0^n$ and $t\in[0,1].$ Hence, the continuity of $\psi$ follows from the continuity of $H$ and $F.$
To show that $\psi(A,t)=\frac{1-t}{t}\mathbb{B}$ for $\frac{\sqrt5-1}{2}\leq t<1,$ notice that in this interval we always have that $t\geq\frac{1-t}{t}$. Hence,  for every $A\in\mathcal{K}_0^n$ and $t\in\big[\frac{\sqrt5-1}{2}, 1\big)$, we conclude that $\frac{1-t}{t}\mathbb{B}=\{0\}+\frac{1-t}{t}\mathbb{B}\subseteq A+t\mathbb{B}$ and therefore $\frac{1-t}{t}\mathbb{B}\subseteq\psi(A,t)\subseteq\frac{1-t}{t}\mathbb{B}.$ 
\end{proof}

\begin{lemma}
\label{lem:kappa-map}
For each $t\in(0,1)$ and $A\in\mathcal{K}_0^n,$  let $\kappa_t(A)$ be the set 
\begin{equation}\label{eq:kappa}
    \kappa_t(A):=\cco\left(\bigcup_{a\in A}\left[0,\frac{1}{1+t\|a\|}a\right]\cup t\mathbb{B}\right).
\end{equation}

Then  $\kappa_t(A)\subseteq\psi_t(A^\circ)^\circ.$   Recall that $\psi_t$ stands for the map  $\psi(\cdot, t)$ and $\psi$ is the map defined in (\ref{eq:funcion psi}).
\end{lemma}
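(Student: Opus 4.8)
The plan is to exploit the fact that for any set its polar is closed, convex, and contains the origin. Since $\kappa_t(A)$ is by definition a closed convex hull and $\psi_t(A^\circ)^\circ$ is automatically closed and convex, it suffices to show that each of the generating sets appearing inside the $\cco$ in (\ref{eq:kappa})---namely each segment $\left[0,\frac{1}{1+t\|a\|}a\right]$ and the ball $t\mathbb{B}$---is contained in $\psi_t(A^\circ)^\circ$; the desired inclusion then follows because the closed convex hull of a subset of a closed convex set stays inside that set. Moreover, since $0\in\psi_t(A^\circ)^\circ$ and this polar is convex, the segment $\left[0,\frac{1}{1+t\|a\|}a\right]$ lies in it as soon as its far endpoint $\frac{1}{1+t\|a\|}a$ does. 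Thus the whole problem reduces to two pointwise verifications.

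First I would verify $t\mathbb{B}\subseteq\psi_t(A^\circ)^\circ$. By the very definition (\ref{eq:funcion psi}), $\psi_t(A^\circ)\subseteq\frac{1-t}{t}\mathbb{B}$; hence for any $z\in t\mathbb{B}$ and any $y\in\psi_t(A^\circ)$ the Cauchy--Schwarz inequality gives $\langle z,y\rangle\leq\|z\|\,\|y\|\leq t\cdot\frac{1-t}{t}=1-t\leq1$, so $z\in\psi_t(A^\circ)^\circ$.

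Next I would verify $\frac{1}{1+t\|a\|}a\in\psi_t(A^\circ)^\circ$ for each $a\in A$. Fix $y\in\psi_t(A^\circ)$; using (\ref{eq:funcion psi}) again we have $y\in A^\circ+t\mathbb{B}$, so we may write $y=y'+z$ with $y'\in A^\circ$ and $z\in t\mathbb{B}$. Since $a\in A$ and $y'\in A^\circ$, the definition (\ref{eq:polarsetdefn}) of the polar gives $\langle a,y'\rangle\leq1$, while Cauchy--Schwarz yields $\langle a,z\rangle\leq\|a\|\,\|z\|\leq t\|a\|$. Adding these, $\langle a,y\rangle\leq1+t\|a\|$, and therefore $\left\langle\frac{1}{1+t\|a\|}a,\,y\right\rangle\leq1$. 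As $y\in\psi_t(A^\circ)$ was arbitrary, the endpoint lies in the polar, as required.

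Combining the two verifications with the convexity-and-closedness reduction of the first paragraph yields $\kappa_t(A)\subseteq\psi_t(A^\circ)^\circ$. The argument is essentially a calibration check: the only genuinely delicate point is recognizing that the denominator $1+t\|a\|$ in (\ref{eq:kappa}) is tuned exactly to absorb the error $t\|a\|$ produced by the Minkowski summand $t\mathbb{B}$ in $\psi_t(A^\circ)$, so that the two separate estimates close up to the bound $1$. I expect no serious obstacle beyond keeping track of which of the two defining constraints of $\psi_t(A^\circ)$---membership in $A^\circ+t\mathbb{B}$ versus membership in $\frac{1-t}{t}\mathbb{B}$---is used for each type of generator.
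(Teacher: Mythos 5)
Your proof is correct, but it proceeds by a genuinely different route than the paper. The paper's argument is a ``global'' polar-calculus computation: it decomposes $A=\bigcup_{a\in A}[0,a]$, applies property (P3) to get $A^\circ=\bigcap_{a\in A}[0,a]^\circ$, observes that $A^\circ+t\mathbb{B}\subseteq\bigcap_{a\in A}\bigl([0,a]^\circ+t\mathbb{B}\bigr)$, and then takes polars, using (P2) and (P4) to convert the intersection into a closed convex hull of a union and the identity $\bigl([0,a]^\circ+t\mathbb{B}\bigr)^\circ=\left[0,\frac{1}{1+t\|a\|}a\right]$ together with $(\frac{1-t}{t}\mathbb{B})^\circ = \frac{t}{1-t}\mathbb{B}$; the lemma follows since $t\mathbb{B}\subseteq\frac{t}{1-t}\mathbb{B}$. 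You instead work pointwise and dually: you reduce the inclusion to checking that each generator of the closed convex hull lies in the closed convex set $\psi_t(A^\circ)^\circ$, and then verify the two membership statements by direct inner-product estimates, using $\psi_t(A^\circ)\subseteq\frac{1-t}{t}\mathbb{B}$ for the ball $t\mathbb{B}$ and $\psi_t(A^\circ)\subseteq A^\circ+t\mathbb{B}$ plus Cauchy--Schwarz for the endpoints $\frac{1}{1+t\|a\|}a$. Your approach is more elementary and self-contained: it avoids (P3), (P4), and in particular the unproved-in-the-paper computation of $\bigl([0,a]^\circ+t\mathbb{B}\bigr)^\circ$, and it makes explicit why the normalization $1+t\|a\|$ is exactly right (it absorbs the error $t\|a\|$ coming from the Minkowski summand). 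What the paper's route buys is consistency with the duality formalism used throughout, and a marginally stronger conclusion, namely the inclusion with the larger ball $\frac{t}{1-t}\mathbb{B}$ in place of $t\mathbb{B}$ --- though your estimate $\langle z,y\rangle\leq 1-t$ could be sharpened to recover that as well.
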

\begin{proof} Let   $A\in\mathcal{K}_0^n$ and observe that 
 $A=\bigcup_{a\in A}[0,a]$. Then,  by property (P3), 
$A^\circ=\bigcap_{a\in A}\left[0,a\right]^\circ$ and therefore
$$A^\circ+t\mathbb{B}=\bigcap_{a\in A}[0,a]^\circ+t\mathbb{B}\subseteq\bigcap_{a\in A}\left\{[0,a]^\circ+t\mathbb{B}\right\}.$$ Now we can use some of the basic properties of the polar set to infer that
\begin{align*}
\psi_t(A^\circ)^\circ&\supseteq\left(\bigcap_{a\in A}\left\{[0,a]^\circ+t\mathbb{B}\right\}\cap \frac{1-t}{t}\mathbb{B}\right)^\circ\\
&=\cco\left(\left(\bigcap_{a\in A}\left\{[0,a]^\circ+t\mathbb{B}\right\}\right)^\circ\cup\frac{t}{1-t}\mathbb{B}\right)\\
&=\cco\left(\bigcup_{a\in A}\left\{[0,a]^\circ+t\mathbb{B}\right\}^\circ\cup\frac{t}{1-t}\mathbb{B}\right)\\
&=\cco\left(\bigcup_{a\in A}\left[0,\frac{1}{1+t\|a\|}a\right]\cup\frac{t}{1-t}\mathbb{B}\right)\\
&\supseteq\cco\left(\bigcup_{a\in A}\left[0,\frac{1}{1+t\|a\|}a\right]\cup t\mathbb{B}\right).\\
\end{align*}
Therefore $\kappa_t(A)\subseteq\psi_t(A^\circ)^\circ$ for all $A\in\mathcal{K}_0^n.$
\end{proof}
\begin{theorem}
\label{thm:K0-Z2-contractible}
 The space  $\mathcal{K}_0^n,$ $n\geq2,$  is $\mathbb{Z}_2$-contractible to its fixed point $\mathbb{B}.$
\end{theorem}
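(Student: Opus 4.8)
The plan is to verify the criterion isolated in Remark~\ref{rem: prueba Main 1}, i.e. to exhibit an equivariant contraction of $\mathcal K^n_0$ onto its fixed point $\mathbb B$. Using the geometric mean $g$ and the map $\psi$ of (\ref{eq:funcion psi}), I would define
$$\varphi(A,t):=\begin{cases} A & t=0,\\ g\big(\psi_t(A),\ \psi_t(A^\circ)^\circ\big) & t\in(0,1),\\ \mathbb B & t=1. \end{cases}$$
For $t\in(0,1)$ both arguments lie in $\mathcal K^n_{(0),b}$ (each $\psi_t(A)$ is a compact convex neighborhood of $0$, and polarity preserves $\mathcal K^n_{(0),b}$ by (P1)), so $g$ applies and $\varphi_t(A)\in\mathcal K^n_{(0),b}$. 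The equivariance $\varphi_t(A^\circ)=\varphi_t(A)^\circ$ should follow from $(\Gamma 1)$, $(\Gamma 2)$ and the Bipolar Theorem (P8): on one hand $\varphi_t(A)^\circ=g\big(\psi_t(A)^\circ,\psi_t(A^\circ)\big)$ by $(\Gamma 2)$ and (P8); on the other $\varphi_t(A^\circ)=g\big(\psi_t(A^\circ),\psi_t(A)^\circ\big)$, again by (P8); these coincide by the symmetry $(\Gamma 1)$, and at $t=0,1$ equivariance is immediate since $A\mapsto A^\circ$ fixes the prescribed values.

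Next I would settle the endpoints and the interior continuity. By Lemma~\ref{lem:psimap}, for $t\in[\tfrac{\sqrt{5}-1}{2},1)$ one has $\psi_t(A)=\tfrac{1-t}{t}\mathbb B$, hence $\psi_t(A^\circ)^\circ=\tfrac{t}{1-t}\mathbb B$ by (P7), so $\varphi_t(A)=g\big(\tfrac{1-t}{t}\mathbb B,(\tfrac{1-t}{t}\mathbb B)^\circ\big)=\mathbb B$ by $(\Gamma 3)$; thus $\varphi$ is constantly $\mathbb B$ near $t=1$ and continuous there. On $\mathcal K^n_0\times(0,1)$ continuity follows by composing the continuity of $\psi$ (Lemma~\ref{lem:psimap}), of $\alpha$ (Fact~\ref{fact: continuidad polar daw }) and of $g$ in $d_H$ $(\Gamma 5)$, using that $d_H$ and $d_{AW}$ agree on $\mathcal K^n_{(0),b}$ (Fact~\ref{fact:d_H y d_AW coincinden en K_b}).

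The only genuine difficulty is continuity at $t=0$, where $g(A,A)$ is not even defined for unbounded $A$. Here I would bypass $g$ and squeeze $\varphi_t(A)$ between two explicit polar-dual sets. Put $L_t:=\kappa_t(A)\cap\tfrac{1-t}{t}\mathbb B$ and $U_t:=\big(\kappa_t(A^\circ)\cap\tfrac{1-t}{t}\mathbb B\big)^\circ$. Since each $\kappa_s(C)\subseteq C+s\mathbb B$ (its spoke endpoints lie on segments $[0,c]\subseteq C$, and it contains $s\mathbb B$), Lemma~\ref{lem:kappa-map} gives $L_t\subseteq\psi_t(A)$ and $L_t\subseteq\kappa_t(A)\subseteq\psi_t(A^\circ)^\circ$; applying the same lemma to $A^\circ$ and taking polars (P2), (P8) gives $\psi_t(A)\subseteq U_t$ and $\psi_t(A^\circ)^\circ\subseteq U_t$. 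For $t$ small all four sets lie in $\mathcal K^n_{(0),b}$, so the monotonicity $(\Gamma 4)$ together with $g(X,X)=X$ from $(\Gamma 1)$ yields
$$ L_t=g(L_t,L_t)\subseteq g\big(\psi_t(A),\psi_t(A^\circ)^\circ\big)=\varphi_t(A)\subseteq g(U_t,U_t)=U_t. $$

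It then remains to check $L_t\to A$ and $U_t\to A$ in $d_{AW}$ as $t\to0^+$, jointly in $A$. For this I would establish $\kappa_t(A)\to A$ from the two-sided estimate $\kappa_t(A)\subseteq A+t\mathbb B$ and $A\cap R\mathbb B\subseteq\kappa_t(A)+R^2t\,\mathbb B$ (each $x\in A\cap R\mathbb B$ satisfies $\big\|x-\tfrac{1}{1+t\|x\|}x\big\|\le t\|x\|^2$), which controls the truncated Hausdorff distances of Lemma~\ref{lem:car-dAW}; these bounds are uniform in $A$, so $L_t\to A$ jointly, while $U_t\to A$ follows by applying the same estimate to $A^\circ$ and using the continuity of the polar map (Fact~\ref{fact: continuidad polar daw }) to pass from $\kappa_t(A^\circ)\to A^\circ$ to $U_t\to(A^\circ)^\circ=A$. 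Finally, any $C$ with $L_t\subseteq C\subseteq U_t$ satisfies $d_{AW}(C,A)\le\max\{d_{AW}(L_t,A),d_{AW}(U_t,A)\}$, by bounding the two directions of the truncated Hausdorff distance separately; hence the displayed sandwich forces $\varphi_t(A)\to A=\varphi_0(A)$. This produces the desired $\mathbb Z_2$-equivariant contraction. I expect the main obstacle to be exactly this $t=0$ continuity: the geometric mean lives only on $\mathcal K^n_{(0),b}$, and the polar-dual sandwich $L_t\subseteq\varphi_t(A)\subseteq U_t$ is precisely what allows one to cross over to the possibly unbounded limit $A$.
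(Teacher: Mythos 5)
Your proposal is correct, and for most of the proof it coincides with the paper's: the homotopy $\varphi$ of (\ref{eq:mainmap}), the equivariance computation via $(\Gamma1)$, $(\Gamma2)$ and (P8), and the continuity on $\mathcal K^n_0\times(0,1)$ and near $t=1$ are handled exactly as the paper handles them. The divergence is in the continuity at $t=0$. There the paper proves two claims: an upper bound $\varphi_t(A)\subseteq A+\tfrac{t}{1-t}\mathbb B$ (Claim 1, obtained by showing that both $\psi_t(A)$ and $\psi_t(A^\circ)^\circ$ lie in $\bigl(A+\tfrac{t}{1-t}\mathbb B\bigr)\cap\tfrac1t\mathbb B$ and applying $(\Gamma1)$, $(\Gamma4)$), and a pointwise lower bound: each $b=\tfrac{1}{1+t\|a\|}a$ with $a\in A\cap j\mathbb B$ lies in $M:=\psi_t(A)\cap\psi_t(A^\circ)^\circ=g(M,M)\subseteq\varphi_t(A)$, via Lemma~\ref{lem:kappa-map}. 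Your lower bound $L_t=\kappa_t(A)\cap\tfrac{1-t}{t}\mathbb B\subseteq\varphi_t(A)$ is the set-valued form of exactly that argument (indeed $L_t\subseteq M$), so the genuine novelty is your upper bound $U_t=\bigl(\kappa_t(A^\circ)\cap\tfrac{1-t}{t}\mathbb B\bigr)^\circ$, the polar dual of the lower bound for $A^\circ$, in place of the paper's $A+\tfrac{t}{1-t}\mathbb B$. Both are valid, and your inclusions $\psi_t(A)\subseteq U_t$, $\psi_t(A^\circ)^\circ\subseteq U_t$ and the $d_{AW}$ sandwich inequality (via Lemma~\ref{lem:car-dAW}) all check out; the trade-off is uniformity. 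The paper's bound satisfies $d_{AW}\bigl(A+\tfrac{t}{1-t}\mathbb B,A\bigr)\le\tfrac{t}{1-t}$ uniformly in $A$, so Claim 2 gives $d_{AW}(\varphi_t(A),A)\to0$ uniformly in $A$ and joint continuity at $(K,0)$ is a one-line triangle inequality. Your $U_t$ converges only after passing through the polar map, whose continuity (Fact \ref{fact: continuidad polar daw }) is pointwise, not uniform; so the phrase ``$U_t\to A$ \dots jointly in $A$'' is not quite what your estimates deliver. The correct assembly routes the limit through the fixed target $K$: your uniform estimate plus polar continuity at $K$ gives $\kappa_t(A^\circ)\cap\tfrac{1-t}{t}\mathbb B\to K^\circ$ as $(A,t)\to(K,0)$, and then polar continuity at $K^\circ$ gives $U_t\to K^{\circ\circ}=K$; combined with the uniform convergence $L_t\to A\to K$ and the sandwich inequality $d_{AW}(\varphi_t(A),K)\le\max\bigl\{d_{AW}(L_t,K),\,d_{AW}(U_t,K)\bigr\}$, this yields $\varphi_t(A)\to K$, i.e.\ joint continuity. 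With that rewording your argument is complete; the comparison shows what the paper's Claim 1 buys, namely a uniform modulus that makes the polar-continuity detour unnecessary.
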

\begin{proof}
 Let $g$ be the geometric mean on $\mathcal K^n_{(0), b}\times \mathcal K^n_{(0), b} $  (see equation (\ref{eq:geometric mean})), and consider the map $\varphi:\mathcal{K}_0^n\times[0,1]\rightarrow\mathcal{K}_0^n$ defined as
\begin{align}
\label{eq:mainmap}
\varphi(A,t)=\begin{cases}
A\quad\text{if }t=0,\\
g(\psi_t(A),\psi_t(A^\circ)^\circ)\quad\text{if }t\in(0,1),\\
\mathbb{B}\quad\text{if }t=1,
\end{cases}
\end{align}
for every $(A,t)\in\mathcal K^n_0\times[0,1]$.
Observe that $\varphi_0$ is the 
identity on $\mathcal{K}_0^n,$ and $\varphi_1(A)=\mathbb{B}$ for every $A\in \mathcal K^n_0$.
In order to complete the proof, it is enough to verify that $\varphi$ is  continuous and that $\varphi_t(A^\circ)=\varphi_t(A)^\circ$ for every $t\in [0,1]$ and every $A\in\mathcal K^n_0$.
Clearly $\varphi_0(A^\circ)=A^\circ=\varphi_0(A)^\circ$ and $\varphi_1(A^\circ)=\mathbb B=\mathbb B^{\circ}=\varphi_1(A)^\circ$. If $t\in(0,1),$  we can use
properties $(\Gamma1)$ and $(\Gamma2)$ to obtain
\begin{align*}
\varphi_t(A^\circ)&=g\left(\psi_t(A^\circ),\psi_t(A^{\circ\circ})^\circ\right)
=g\left(\psi_t(A^\circ),\psi_t(A)^\circ\right)\\
&=g\left(\psi_t(A)^\circ,\psi_t(A^\circ)\right)
=g(\psi_t(A)^\circ,\psi_t(A^\circ)^{\circ\circ})\\
&=g\left(\psi_t(A),\psi_t(A^\circ)^{\circ}\right)^\circ
=\varphi_t(A)^\circ.
\end{align*}
Hence, $\varphi_t$ is an equivariant map for every $t$. 

To prove that $\varphi$ is continuous, take any pair $(K,s)\in\mathcal K^{n}_0\times [0,1]$.  We  consider separately the cases $s\in(0,1),$ $s=1$ and $s=0.$ The continuity of $\varphi$ on pairs $(K,s),$ $s\in(0,1)$ and $K\in\mathcal{K}_0^n,$  follows from the continuity of both  $\psi$ (Lemma \ref{lem:psimap}) and $\alpha$ (Proposition \ref{prop:polar-continuous}), together with the continuity of the geometric mean  w.r.t.
$d_{AW}$ on $\mathcal{K}_{(0),b}^n.$  The latter is a consequence of the facts that $g$ is continuous on $(\mathcal{K}_{(0),b}^n,d_H)$ (see ($\Gamma5$)), 
and that the metrics $d_{AW}$ and $d_H$ induce the same topology on  $\mathcal{K}_{(0),b}^n$ (Fact \ref{fact:d_H y d_AW coincinden en K_b}). 

If $s=1$, recall that $\psi_t(K)=\psi_t(K^{\circ})=\frac{1-t}{t}\mathbb{B},$  for all $K\in\mathcal{K}_0^n,$ and $t\in \left[ \frac{\sqrt5-1}{2}, 1\right)$   (Lemma \ref{lem:psimap}). Hence, by properties ($\Gamma3$) and (P7), we know that
$$\varphi(K,t)=g\left(\frac{1-t}{t}\mathbb{B},\frac{t}{1-t}\mathbb{B}\right)=\mathbb{B}.$$
Thus,  for every $\delta\in \left [ \frac{\sqrt5-1}{2}, 1\right)$, the set $\mathcal U:=\mathcal K^n_0\times (\delta, 1]$ is an open neighborhood of the pair $(K,1)$ such that $\varphi|_{\mathcal U}$ is constant. Therefore $\varphi$ is continuous on 
$(K,1),$ for every $K\in\mathcal{K}_0^n.$

The case $s=0$ will require the following claims:

\textbf{Claim 1.} For any $t\in\left(0,\frac{1}{2}\right)$ and $A\in\mathcal{K}_0^n$ we have that $$\varphi_t({A})\subseteq A+\frac{t}{1-t}\mathbb{B}.$$

\textit{Proof of Claim 1.}
Let $t\in\left(0,\frac{1}{2}\right)$ and $A\in\mathcal{K}_0^n$ be fixed. Since $\psi_t({A})=(A+t\mathbb{B})\cap\frac{1-t}{t}\mathbb{B},$ then 
$\psi_t({A})\subseteq A+t\mathbb{B}\subseteq A+\frac{t}{1-t}\mathbb{B},$ and 
$\psi_t({A})\subseteq\frac{1-t}{t}\mathbb{B}\subseteq\frac{1}{t}\mathbb{B}. $ So,
\begin{equation}
\label{eq:inclu1claim1}
\psi_t({A})\subseteq\left(A+\frac{t}{1-t}\mathbb{B}\right)\cap\frac{1}{t}\mathbb{B}.
\end{equation}

On the other hand, we know that $A^\circ\subseteq A^\circ+t\mathbb{B},$ hence $(A^\circ+t\mathbb{B})^\circ\subseteq A^{\circ\circ}=A.$ Since  $0$ belongs to both
$(A^\circ+t\mathbb{B})^\circ$ and $\mathbb{B},$ we have that $(A^\circ+t\mathbb{B})^\circ\cup\frac{t}{1-t}\mathbb{B}\subseteq A+\frac{t}{1-t}\mathbb{B}.$ In consequence, we can use  the fact that $A+\frac{t}{1-t}\mathbb{B}$ is closed and convex to infer that
\begin{align}
\label{eq:inclu2claim1}
\psi_t(A^\circ)^\circ&=\left[(A^\circ+t\mathbb{B})\cap\frac{1-t}{t}\mathbb{B}\right]^\circ
\\&=\cco\left[(A^\circ+t\mathbb{B})^\circ\cup\frac{t}{1-t}\mathbb{B}\right]\nonumber
\\&\subseteq A+\frac{t}{1-t}\mathbb{B}.\nonumber
\end{align}
 Since
$0<t<\frac{1}{2},$ then $t\mathbb{B}\subset\frac{1-t}{t}\mathbb{B}.$ This yields to $t\mathbb{B}\subseteq \big(A^\circ+t\mathbb{B}\big)\cap\frac{1-t}{t}\mathbb{B}.$ Therefore, $t\mathbb{B}\subseteq\psi_t(A^\circ)$ and $\psi_t(A^\circ)^\circ\subseteq\frac{1}{t}\mathbb{B}.$ 
This, in combination with (\ref{eq:inclu2claim1}), implies that
\begin{equation*}
\psi_t(A^\circ)^\circ\subseteq\left(A+\frac{t}{1-t}\mathbb{B}\right)\cap\frac{1}{t}\mathbb{B}.
\end{equation*} 

Finally, we can combine the above inclusion together with (\ref{eq:inclu1claim1}), and properties $(\Gamma1)$ and $(\Gamma4)$ to obtain 
\begin{align*}
\varphi_t(A)&=g\left(\psi_t({A}),\psi_t(A^\circ)^\circ\right)\\
&\subseteq g\left(\left(A+\frac{t}{1-t}\mathbb{B}\right)\cap\frac{1}{t}\mathbb{B},\left(A+\frac{t}{1-t}\mathbb{B}\right)\cap\frac{1}{t}\mathbb{B}\right)\\
&=\left(A+\frac{t}{1-t}\mathbb{B}\right)\cap\frac{1}{t}\mathbb{B}\\
&\subseteq A+\frac{t}{1-t}\mathbb{B}.
\end{align*}
\QEDA

\textbf{Claim 2.} For every $\varepsilon>0$, there exists $\eta>0$ such that  if $0<t<\eta,$  then $d_{AW}(\varphi_t(A),A)<\varepsilon$ for all $A\in\mathcal{K}_0^n.$

\textit{Proof of Claim 2.} Let $A\in\mathcal{K}_0^n$ and let us suppose that $\frac{1}{j+1}<\varepsilon\leq\frac{1}{j}$ for some integer $j\geq1.$ Notice that, by Lemma \ref{lem:car-dAW}-(5), $d_{AW}\big(\varphi_t(A),A\big)<\varepsilon$ if and only if  $d_H\big(\varphi_t(A)\cap j\mathbb B,A\cap j\mathbb B\big)<\varepsilon.$ Thus, in order to prove the claim, we will show that  $d_H\big(\varphi_t(A)\cap j\mathbb B,A\cap j\mathbb B\big)<\varepsilon$ if  $t<\eta<\text{min}\{\frac{1}{2},\frac{\varepsilon}{\varepsilon+1},\frac{1}{j+1},\frac{\varepsilon}{j^2}\}.$ Indeed, since $\varphi_t(A)\subseteq A+\frac{t}{1-t}\mathbb{B}$ (Claim 1) and $\frac{t}{1-t}<\varepsilon,$ then $\varphi_t(A)\subseteq A+\varepsilon\mathbb{B}.$ In consequence, for every $x\in\varphi_t(A)\cap j\mathbb B,$ 
$d(x,A\cap j\mathbb B)=d(x,A)<\varepsilon$ (Lemma \ref{lem:car-dAW}-(1)). So $\varphi_t(A)\cap j\mathbb B\subseteq A\cap j\mathbb B+\varepsilon \mathbb{B}.$

To show that $A\cap j\mathbb B\subseteq\varphi_t(A)\cap j\mathbb B+\varepsilon \mathbb{B},$ pick an arbitrary point $a\in A\cap j\mathbb B$ and define $b:=\frac{1}{1+t\|a\|}a$. Notice that $b\in A$ and $\|b\|\leq\|a\|\leq j.$ Moreover, since  $t<\frac{1}{j+1},$ we also have that $\|b\|\leq j<\frac{1-t}{t}.$ Hence,  
$$b\in(A+t\mathbb{B})\cap\frac{1-t}{t}\mathbb{B}=\psi_t(A).$$
 On the other hand, since $b\in\left[0,\frac{1}{1+t\|a\|}a\right]\subseteq\kappa_t(A)$ (where $\kappa_t(A)$ is the set defined in equation~(\ref{eq:kappa})), then $b\in\psi_t(A^\circ)^\circ$ (Lemma \ref{lem:kappa-map}). Thus, if we define
$M:=\psi_t(A)\cap\psi_t(A^\circ)^\circ,$ we can use  properties $(\Gamma1)$ and $(\Gamma4)$  to conclude that $b\in M=g(M,M)\subseteq g(\psi_t(A),\psi_t(A^\circ)^\circ)=\varphi_t(A).$ Hence, $b\in\varphi_t(A)\cap j\mathbb B$ and $\|a-b\|\leq\|a\|^2t<j^2t<\varepsilon$. This proves that $A\cap j\mathbb B\subseteq\varphi_t(A)\cap j\mathbb B+\varepsilon \mathbb{B} $  and therefore $d_H(\varphi_t(A)\cap j\mathbb B,A\cap j\mathbb B)<\varepsilon$, as desired. 
\QEDA

We turn to prove the continuity of $\varphi$  at the point $(K,0)\in \mathcal{K}_0^n\times [0,1]$. To this end, let $\rho>0$ and choose $0<\eta<\rho$ such that 
$d_{AW}(\varphi_t(A),A)<\frac{\rho}{2},$ for every $A\in\mathcal{K}_0^n$ and  $t\in (0,\eta)$  (Claim 2). 
Finally, pick any pair $(A,t)\in\mathcal{K}_0^n\times [0,1]$ with $d_{AW}(K,A)<\frac{\rho}{2}$ and $t<\eta$. If $t=0$, then $\varphi_t(A)=\varphi_0(A)=A$ and therefore
$$d_{AW}(\varphi_0(A), \varphi_0(K))=d_{AW}(A,K)<\frac{\rho}{2}.$$

On the other hand, if $t>0$, by the choice of $\eta$, we conclude that 
$$
d_{AW}(\varphi_t(A),K)\leq d_{AW}(\varphi_t(A),A)+d_{AW}(A,K)<\frac{\rho}{2}+\frac{\rho}{2}=\rho.
$$
Therefore $\varphi$ is continuous at $(K,0)$ and now the proof is completed. 
\end{proof}

Finally, we can combine Theorem~\ref{thm:K0-Z2-contractible} with Remark~\ref{rem: prueba Main 1} to  
obtain the following corollary.

\begin{corollary}
\label{cor:K0-AR}
\begin{enumerate}
    \item $\mathcal{K}^n_0$ is a $\mathbb Z_2$-$\mathrm{AR}$.
    \item $\mathcal{K}^n_0/\alpha$ is an $\mathrm{AR}$.
    \item Theorem~\ref{Theo:main 1} is true. 
\end{enumerate}
\end{corollary}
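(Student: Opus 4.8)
The plan is to read off all three conclusions from the single fact, just established in Theorem~\ref{thm:K0-Z2-contractible}, that $\mathcal{K}_0^n$ is $\mathbb Z_2$-contractible to $\mathbb B$. Indeed, Remark~\ref{rem: prueba Main 1} records precisely that $\mathbb Z_2$-contractibility forces statements (1)--(3), so the heart of the argument is to verify that the hypotheses of the results cited there are met and then to chain them together. All the substantive work has already been carried out in Theorem~\ref{thm:K0-Z2-contractible}; what remains is bookkeeping.

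For (1) I would apply Antonyan's Theorem~\ref{the: Antonyan G contactible} with $G=\mathbb Z_2$ (a finite, hence compact Lie, group) and $X=\mathcal{K}_0^n$. Its three hypotheses hold: $\mathcal{K}_0^n\in\mathrm{AR}$ by Proposition~\ref{prop:K0AR}(3); $\mathcal{K}_0^n$ is a based-free $\mathbb Z_2$-space with unique fixed point $\mathbb B$ by Proposition~\ref{prop:polar-continuous}; and it is $\mathbb Z_2$-contractible by Theorem~\ref{thm:K0-Z2-contractible}. The conclusion is that $\mathcal{K}_0^n$ is a $\mathbb Z_2$-$\mathrm{AR}$. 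Statement (2) is then immediate from \cite[Theorem 8]{Antonyan1990}, which asserts that the orbit space of a $\mathbb Z_2$-$\mathrm{AR}$ is an $\mathrm{AR}$; since the orbit space $\mathcal{K}_0^n/\mathbb Z_2$ coincides with the quotient $\mathcal{K}_0^n/\alpha$ induced by the involution, we conclude that $\mathcal{K}_0^n/\alpha\in\mathrm{AR}$.

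Finally, for (3) I would transport the action to the Hilbert cube and invoke the West--Wong criterion (Theorem~\ref{thm:WestWongThm1}). Fix any homeomorphism $h\colon\mathcal{K}_0^n\to Q$ (which exists by Theorem~\ref{thm:K0cuboHilbert}) and set $\tilde\alpha:=h\circ\alpha\circ h^{-1}$, a based-free involution on $Q$ whose orbit space is homeomorphic to $\mathcal{K}_0^n/\alpha$, hence an $\mathrm{AR}$ by (2). Theorem~\ref{thm:WestWongThm1} then says $\tilde\alpha$ is standard, i.e.\ there is an equivariant homeomorphism $\Phi\colon(Q,\tilde\alpha)\to(Q,\sigma)$; composing, $\Phi\circ h\colon(\mathcal{K}_0^n,\alpha)\to(Q,\sigma)$ is an equivariant homeomorphism, which is exactly the conjugacy asserted in Theorem~\ref{Theo:main 1}. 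The only point requiring a line of care is checking that $\Phi\circ h$ intertwines $\alpha$ and $\sigma$, which follows formally from $h\alpha=\tilde\alpha h$ and $\Phi\tilde\alpha=\sigma\Phi$. There is no genuine obstacle at this stage: the difficulty of the whole program is concentrated in constructing the equivariant contraction $\varphi$ of Theorem~\ref{thm:K0-Z2-contractible}, after which this corollary is a purely formal consequence of the $G$-space machinery.
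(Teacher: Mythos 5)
Your proposal is correct and follows essentially the same route as the paper: the paper derives the corollary by combining Theorem~\ref{thm:K0-Z2-contractible} with Remark~\ref{rem: prueba Main 1}, which encodes precisely the chain you describe (Antonyan's Theorem~\ref{the: Antonyan G contactible} for (1), \cite[Theorem 8]{Antonyan1990} for (2), and the West--Wong criterion, Theorem~\ref{thm:WestWongThm1}, for (3)). Your only addition is to spell out the transport of $\alpha$ to $Q$ via the homeomorphism from Theorem~\ref{thm:K0cuboHilbert} and the formal equivariance check, which the paper leaves implicit.
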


\section{Polar preserving maps}
\label{sec:polarpreservingmap}

A map $F:\mathcal{K}_0^n\rightarrow\mathcal{K}_0^n$ such that $F(A^\circ)=F(A)^\circ,$ for all 
$A\in\mathcal{K}_0^n,$ is called \textit{polar-equivariant}. In the proof of Theorem \ref{thm:K0-Z2-contractible}, we have constructed a family of polar-equivariant maps 
$\varphi_t:=\varphi(\cdot,t)$ on $\mathcal{K}_0^n,$ for $t\in[0,1].$  As follows from their definition (\ref{eq:mainmap}), the maps $\varphi_t$ are determined by two ``global''  operations:  the polar mapping and the geometric mean. In contrast, there are polar-equivariant maps determined exclusively by point maps on $\mathbb{R}^n.$ This is the case of the orthogonal maps $U\in O(n),$ since the induced map $\widetilde{U}:\mathcal{K}_0^n\rightarrow\mathcal{K}_0^n,$ given by $\widetilde{U}(A):=\{U(a): a\in A\},$ for $A\in\mathcal{K}_0^n,$ is a polar-equivariant map. 

In the following, we investigate the properties of the polar-equivariant maps 
induced by maps $f:\mathbb{R}^n\rightarrow\mathbb{R}^n.$ To do so, we require the following definition.

\begin{definition}
A map $f:\mathbb{R}^n\rightarrow\mathbb{R}^n$ is called polar preserving map if $f(A^\circ)=f(A)^\circ,$ for all $A\in\mathcal{K}_0^n.$ 
\end{definition}

The next properties are direct consequences of the above definition: 
\begin{remark}
\label{rem:obs1}
Let $f:\mathbb{R}^n\rightarrow\mathbb{R}^n$ be a polar preserving map. Then the following hold:
\begin{enumerate}
\item The  induced map $\widetilde{f}:\mathcal{K}_0^n\rightarrow\mathcal{K}_0^n$ given by $\widetilde{f}(A):=f(A),$ for $A\in\mathcal{K}_0^n,$ is a well-defined polar-equivariant map. 
\item $f$ is a surjective map and $f(0)=0$.
\item $f(\mathbb{B})=\mathbb{B}.$
\end{enumerate}
\end{remark}

\begin{proof}
(1) Let $A\in\mathcal{K}_0^n.$ By the Bipolar Theorem (P8), $f(A)=f(A^{\circ\circ})=f(A^\circ)^\circ$. Since the polar set $f(A^\circ)^\circ$ always belongs to $\mathcal{K}_0^n$ (P1), $f(A)\in\mathcal{K}_0^n$ too. In consequence, the map $\widetilde{f}$ is well-defined and clearly satisfies that $\widetilde{f}(A^\circ)=f(A^\circ)=f(A)^\circ= \widetilde{f}(A)^\circ$ for all $A\in\mathcal{K}_0^n.$

(2) By (1) of this remark,  $\widetilde{f}(\{0\})=\{f(0)\}\in\mathcal{K}_0^n.$ Since the only singleton in $\mathcal{K}_0^n$ is $\{0\},$ we conclude that $f(0)=0.$ In addition $f(\mathbb{R}^n)=f(\{0\}^\circ)=\{0\}^\circ=\mathbb{R}^n.$ Hence $f$ is a surjective map. 

(3) By (P5),  $f(\mathbb{B})=f(\mathbb{B}^\circ)=f(\mathbb{B})^\circ, $ and therefore  $f(\mathbb{B})=\mathbb{B}.$  
\end{proof}

\begin{lemma}\label{lem:f=T}
Let $f:\mathbb R^n\to\mathbb R^n$ be a map and $T\in GL(n)$. If $f\big([0,x]\big)=[0, T(x)]$ for every $x\in \mathbb R^n$, then $f=T$.
\end{lemma}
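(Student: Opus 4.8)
The plan is to reduce the statement, which a priori concerns the behaviour of $f$ on every segment through the origin, to a one-dimensional claim along each ray, and then to exploit the hypothesis at scaled points in order to pin down the radial coordinate. The genuine difficulty is that $f$ is assumed to be an arbitrary map — neither continuous nor injective — so one cannot simply argue that $f$ carries the endpoint $x$ of $[0,x]$ to the endpoint $T(x)$ of $[0,T(x)]$. That matching of endpoints has to be forced indirectly, and this is the only real obstacle.

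First I would dispose of the origin: taking $x=0$ and recalling $T(0)=0$ gives $f(\{0\})=f\big([0,0]\big)=[0,T(0)]=\{0\}$, so $f(0)=0=T(0)$.

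Now fix $x\neq 0$. Since $T\in GL(n)$, the vector $T(x)$ is nonzero, so every point of the segment $[0,T(x)]$ has a unique expression $\mu T(x)$ with $\mu\in[0,1]$; this radial coordinate is what I will track. By hypothesis $f\big([0,x]\big)=[0,T(x)]$, and since the far endpoint $T(x)$ lies in $[0,T(x)]$, surjectivity of $f$ onto this segment yields a point $p\in[0,x]$, say $p=s_0x$ with $s_0\in[0,1]$, such that $f(p)=T(x)$.

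The key step is to apply the hypothesis not at $x$ but at the scaled point $s_0x$. This gives $f\big([0,s_0x]\big)=[0,T(s_0x)]=[0,s_0T(x)]$. Because $p=s_0x$ is itself a point of $[0,s_0x]$, we obtain $f(p)\in[0,s_0T(x)]=\{\mu T(x):0\le\mu\le s_0\}$; but $f(p)=T(x)=1\cdot T(x)$, and uniqueness of the radial coordinate (here $T(x)\neq 0$ is essential) forces $1\le s_0$. Combined with $s_0\le 1$ this yields $s_0=1$, hence $p=x$ and $f(x)=f(p)=T(x)$. Since $x\neq 0$ was arbitrary and $f(0)=T(0)$, I conclude $f=T$.
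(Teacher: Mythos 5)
Your proof is correct and follows essentially the same argument as the paper's: both locate a preimage $p=s_0x\in[0,x]$ of $T(x)$, apply the hypothesis at the scaled point $s_0x$, and use the uniqueness of the radial representation along the ray through $T(x)\neq 0$ to force $s_0=1$. The only cosmetic difference is that you spell out the endpoint-matching obstruction and the $s_0=0$ degenerate case explicitly, which the paper leaves implicit.
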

\begin{proof}
Since $T(0)=0$, we obviously have $f(0)=0=T(0)$. Now, if $x\in\mathbb R^n\setminus\{0\}$, we can use the equality 
 $f([0,x])=[0,T(x)]$   to find a  $t\in(0,1]$ such that $f(tx)=T(x).$ Thus, 
$T(x)\in f([0,tx])=[0,tT(x)].$ But, this is only possible  if $t=1$ and therefore $f(x)=T(x)$, as required.
\end{proof}

\begin{lemma}
\label{lem:scalarproductREP}
Let $f:\mathbb{R}^n\to\mathbb{R}^n$ be a  surjective  map such that for every $x,y\in\mathbb{R}^n,$ 
\begin{equation}
\label{eq:scalarineq}
\langle x,y\rangle\leq1\text{ if and only if }\langle f(x),f(y)\rangle\leq1.
\end{equation}  
Then $f$ is a polar preserving map.
\end{lemma}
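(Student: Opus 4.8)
The plan is to prove the two inclusions $f(A^\circ)\subseteq f(A)^\circ$ and $f(A)^\circ\subseteq f(A^\circ)$ for an arbitrary $A\in\mathcal{K}_0^n$, from which the desired equality $f(A^\circ)=f(A)^\circ$ follows. The whole argument rests on rewriting the polar set in the pointwise form
\[
A^\circ=\{y\in\mathbb{R}^n:\langle a,y\rangle\leq1\ \text{for every}\ a\in A\},
\]
which is legitimate because $\sup_{a\in A}\langle a,y\rangle\leq1$ holds if and only if $\langle a,y\rangle\leq1$ for each individual $a\in A$. This reformulation is what allows the pointwise hypothesis (\ref{eq:scalarineq}) to be invoked one element $a\in A$ at a time.

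For the first inclusion I would take $y\in A^\circ$, so that $\langle a,y\rangle\leq1$ for every $a\in A$. The forward implication of (\ref{eq:scalarineq}) then gives $\langle f(a),f(y)\rangle\leq1$ for every $a\in A$. Since every element of $f(A)$ is of the form $f(a)$ with $a\in A$, this says precisely that $f(y)\in f(A)^\circ$, and hence $f(A^\circ)\subseteq f(A)^\circ$.

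For the reverse inclusion I would take $z\in f(A)^\circ$; this is where surjectivity enters. Choose any $y\in\mathbb{R}^n$ with $f(y)=z$ (no injectivity is needed, an arbitrary preimage suffices). Membership $z\in f(A)^\circ$ means $\langle f(a),z\rangle=\langle f(a),f(y)\rangle\leq1$ for every $a\in A$, and the backward implication of (\ref{eq:scalarineq}) yields $\langle a,y\rangle\leq1$ for every $a\in A$, i.e. $y\in A^\circ$. Therefore $z=f(y)\in f(A^\circ)$, proving $f(A)^\circ\subseteq f(A^\circ)$ and completing the equality.

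The argument is short and essentially formal, with no genuine analytic obstacle. The only point demanding care is the reverse inclusion, where surjectivity of $f$ is indispensable: without it there could be points $z\in f(A)^\circ$ lying outside the range of $f$, with no preimage $y$ to feed into (\ref{eq:scalarineq}). It is also worth emphasizing that the biconditional is genuinely used in \emph{both} directions—the forward implication for $f(A^\circ)\subseteq f(A)^\circ$ and the backward one for the opposite inclusion—and that the passage from the supremum formulation of $A^\circ$ to the pointwise one is exactly what makes the hypothesis applicable element by element.
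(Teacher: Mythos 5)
Your proof is correct and follows essentially the same route as the paper's: both directions are proved by the same pointwise argument, using the forward implication of (\ref{eq:scalarineq}) for $f(A^\circ)\subseteq f(A)^\circ$ and surjectivity together with the backward implication for the reverse inclusion. The additional remarks (the supremum-versus-pointwise reformulation of the polar, and that injectivity is unnecessary) are accurate but do not change the argument.
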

\begin{proof}
Let $A\in\mathcal{K}_0^n.$ If $x\in A^\circ$, then $\langle a,x\rangle\leq1$ for every $a\in A.$ Hence, by (\ref{eq:scalarineq}), $\langle f(a),f(x)\rangle\leq1$ for every $a\in A.$ Thus, $f(x)\in f(A)^\circ$ and therefore $f(A^\circ)\subseteq f(A)^\circ.$ Conversely, if $y\in f(A)^\circ$, by the surjectivity of $f$,  there exists  $x\in\mathbb{R}^n$ such that $y=f(x)$. In consequence $\langle f(a),f(x)\rangle\leq1$ for all $a\in A$, and by (\ref{eq:scalarineq}), $\langle a,x\rangle\leq1$ for every $a\in A$. This shows that  $x\in A^\circ$ and therefore $y\in f(A^\circ)$. Hence,  $f(A)^\circ\subseteq f(A^\circ)$ and consequently $f(A^\circ)=f(A)^\circ$, as desired.
\end{proof}

Below, we shall see that if  $n\geq2,$ then the family of injective maps on $\mathbb{R}^n$ which are polar preserving consists only of orthogonal maps.  This is not the case if $n=1$, where we can find maps $f:\mathbb R\to \mathbb R$ which are bijective,  polar-preserving but not orthogonal. For example, all the maps of the form  $f_k(x)=x^{2k+1}$
with $k=1,2,\ldots,$ satisfy condition (\ref{eq:scalarineq}) and therefore they are polar-preserving maps.

Recall that for every pair $A,K$ in $\mathcal{K}_0^n$ ($\mathcal{K}_{(0),b}^n$, resp.), the intersection $A\cap K$ and the closed convex hull 
$A\vee K:=\cco(A\cup K)$ belong to $\mathcal{K}_0^n$ 
($\mathcal{K}_{(0),b}^n$, resp.). In fact, these operations endow $\mathcal{K}_0^n$  and $\mathcal{K}_{(0),b}^n$  with a natural lattice structure (see e.g. \cite{SchneiderBoroczky2008,Slomka2011}).

In the main theorem of \cite{SchneiderBoroczky2008}  and \cite[Theorem 2]{Slomka2011} all endomorphisms of the lattices $\big(\mathcal{K}_{(0),b}^n,\cap,\vee\big)$ and $\big(\mathcal{K}_{0}^n,\cap,\vee\big)$ are characterized, respectively. These theorems are summarized in the following fact that will be used through the section. 

\begin{fact}\label{fact:teoremas-Slomka2011-SchneiderBoroczky2008}
Let $f:\mathcal{K}_{0}^n\rightarrow\mathcal{K}_{0}^n$  
($f:\mathcal{K}_{(0),b}^n\rightarrow\mathcal{K}_{(0),b}^n$, resp.) be a mapping satisfying
\begin{equation} \label{eq:lattice-structure-preserver}
    f(A\cap K)=f(A)\cap f(K)\text{ and }
    f(A\vee K)=f(A)\vee f(K),
\end{equation}
for all $A,K\in\mathcal{K}_{0}^n$ 
($A,K\in\mathcal{K}_{(0),b}^n$, resp.). Then, either $f$ is constant or there exists a linear map $T\in GL(n)$ such that $f(A)=T(A)$ for all $A\in\mathcal{K}_{0}^n$ (for all $A\in\mathcal{K}_{(0),b}^n$, resp.).
\end{fact}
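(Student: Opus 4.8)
The plan is to read off the inclusion order from the two operations and then reconstruct a linear map of $\mathbb R^n$ from the way $f$ acts on the join-irreducible elements of the lattice.

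First I would record that $f$ is monotone: since $A\subseteq K$ is equivalent to $A\cap K=A$, applying $f$ gives $f(A)=f(A)\cap f(K)$, i.e. $f(A)\subseteq f(K)$. Consequently $f(\{0\})\subseteq f(A)\subseteq f(\mathbb R^n)$ for every $A$, so if $f(\{0\})=f(\mathbb R^n)$ then $f$ is constant; I may therefore assume $f$ is non-constant and aim to produce $T\in GL(n)$ with $f(A)=T(A)$. Next I would identify the join-irreducible elements of $(\mathcal K_0^n,\cap,\vee)$ as precisely the sets $\{tv:t\in I\}$ with $I$ an interval containing $0$—in particular the segments $[0,x]$ and the rays from the origin—and then analyse the bottom element. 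Writing $\{0\}$ as the meet $[0,x]\cap[0,-x]$ shows $f(\{0\})=f([0,x])\cap f([0,-x])$; to prove $f(\{0\})=\{0\}$ I would rule out a common nonzero vector in all images by a general-position argument that uses non-constancy together with $n\ge2$ (a nontrivial $f(\{0\})$ would force every $f([0,x])$ to share a fixed direction and so degenerate the range of $f$).

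Once $f(\{0\})=\{0\}$ is in hand, meet-preservation—$[0,x]\cap[0,y]=\{0\}$ whenever $x,y$ are not positively proportional—forces $f$ to be injective on segments and to send each $[0,x]$ to a segment $[0,g(x)]$. This defines a point map $g:\mathbb R^n\to\mathbb R^n$ with $g(0)=0$ that is injective and carries every line through the origin onto a line through the origin. The hard part will be upgrading this to genuine linearity of $g$. Here I would recover the affine structure of $\mathbb R^n$ from $\cap$ and $\vee$: for independent $x,y$ the point $x+y$ is the midpoint of the far edge of the triangle $[0,2x]\vee[0,2y]$, and more generally midpoints—hence whole lines, not only those through the origin—can be singled out intrinsically through intersections and joins of such triangles with the available rays. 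Once $g$ is known to preserve collinearity for all lines, the fundamental theorem of affine geometry—again using $n\ge2$—forces $g$ to be affine, and $g(0)=0$ makes it linear, $g=:T\in GL(n)$.

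To finish, I would use that every $A\in\mathcal K_0^n$ equals $\bigvee_{x\in A}[0,x]$ and promote the finite join-preservation to this representation, approximating $A$ by finite joins of its segments and invoking monotonicity, to conclude $f(A)=\cco\big(\bigcup_{x\in A}[0,T x]\big)=T(A)$. The statement for $\mathcal K_{(0),b}^n$ follows the same philosophy inside the lattice of bodies, as carried out in the cited works, with the least element and the rays replaced by approximating thin bodies. The two genuine obstacles are the recovery of vector addition from the purely lattice-theoretic data (the linearity step above) and the passage from finite joins to the representing join $\bigvee_{x\in A}[0,x]$ needed to pin $f$ down on all of $\mathcal K_0^n$.
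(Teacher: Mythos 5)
You should first be aware that the paper does not prove this Fact at all: it is imported verbatim from the literature (the main theorem of B\"or\"oczky--Schneider and Theorem~2 of Slomka, exactly as the surrounding text says), so there is no internal argument to compare against. Your attempt therefore has to stand on its own as a proof of a genuinely hard theorem, and as written it does not: it is a plan whose two hardest steps you yourself flag as ``genuine obstacles'' at the end, namely the recovery of linearity from the lattice data and the passage from binary joins to the infinite join $A=\bigvee_{x\in A}[0,x]$. Acknowledging the hard steps is not the same as carrying them out, and those steps are precisely where the content of the cited theorems lies.

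Beyond the admitted incompleteness, there is a concrete false inference in the middle of the argument. You claim that meet-preservation, via $[0,x]\cap[0,y]=\{0\}$ for non-proportional $x,y$, ``forces $f$ to send each $[0,x]$ to a segment $[0,g(x)]$.'' It does not: the conclusion you can draw is only that the images $f([0,x])$ and $f([0,y])$ meet in $f(\{0\})$, and two closed convex sets containing $0$ can intersect exactly at the origin while both are higher-dimensional or unbounded (two convex cones with disjoint interiors already do this), so nothing yet makes $f([0,x])$ one-dimensional, bounded, or of the form $[0,z]$. Nor can you appeal to preservation of join-irreducibility: a lattice homomorphism that is neither injective nor surjective need not carry join-irreducible elements to join-irreducible elements, so identifying the join-irreducibles of $(\mathcal{K}_0^n,\cap,\vee)$ does not by itself constrain their images. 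Similarly, in the final step, monotonicity plus $f([0,x])=[0,Tx]$ only gives the inclusion $f(A)\supseteq \cco\bigl(\bigcup_{x\in A}[0,Tx]\bigr)=T(A)$; the reverse inclusion $f(A)\subseteq T(A)$ does not follow from finite join-preservation and order-preservation (a monotone map can inflate infinite joins), and it needs a separate argument, for instance one exploiting meets with half-spaces containing $A$. Until the segment-image step, the linearity step, and the reverse inclusion are actually proved, the proposal remains an outline of the strategy of the cited papers rather than a proof of the Fact.
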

\begin{proposition}
\label{prop:injective-polarpreservingmap}
Let $n\geq 2$. If $f:\mathbb{R}^n\rightarrow\mathbb{R}^n$  is a polar preserving  injective map,  then $f$ is an orthogonal map.
\end{proposition}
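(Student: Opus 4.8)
The plan is to reduce the statement to the lattice-endomorphism characterization of Fact~\ref{fact:teoremas-Slomka2011-SchneiderBoroczky2008} and then to the segment-rigidity of Lemma~\ref{lem:f=T}. By Remark~\ref{rem:obs1}, the induced map $\widetilde f:\mathcal K_0^n\to\mathcal K_0^n$, $\widetilde f(A)=f(A)$, is a well-defined polar-equivariant map with $\widetilde f(\{0\})=\{0\}$ and $\widetilde f(\mathbb B)=\mathbb B$; in particular it is not constant. First I would show that $\widetilde f$ preserves the lattice operations $\cap$ and $\vee$. For intersections this is purely set-theoretic: since $f$ is injective as a point map, $f(A\cap K)=f(A)\cap f(K)$ for all $A,K\in\mathcal K_0^n$, hence $\widetilde f(A\cap K)=\widetilde f(A)\cap\widetilde f(K)$.

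For the join I would exploit the polar identity $A\vee K=(A^\circ\cap K^\circ)^\circ$, which follows from property (P4) together with the Bipolar Theorem, combined with the polar-preserving hypothesis and the intersection case just established:
\begin{align*}
\widetilde f(A\vee K)&=f\big((A^\circ\cap K^\circ)^\circ\big)=f(A^\circ\cap K^\circ)^\circ\\
&=\big(f(A^\circ)\cap f(K^\circ)\big)^\circ=\big(f(A)^\circ\cap f(K)^\circ\big)^\circ=\widetilde f(A)\vee\widetilde f(K).
\end{align*}

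With both lattice operations preserved and $\widetilde f$ non-constant, Fact~\ref{fact:teoremas-Slomka2011-SchneiderBoroczky2008} (here the hypothesis $n\geq2$ is used) yields a linear isomorphism $T\in GL(n)$ with $\widetilde f(A)=T(A)$ for all $A\in\mathcal K_0^n$. Applying this to the segments $[0,x]\in\mathcal K_0^n$ gives $f\big([0,x]\big)=T\big([0,x]\big)=[0,T(x)]$ for every $x\in\mathbb R^n$, so Lemma~\ref{lem:f=T} forces $f=T$; thus $f$ is linear. Finally, since $f(\mathbb B)=\mathbb B$ by Remark~\ref{rem:obs1}, the linear bijection $T$ maps the Euclidean unit ball onto itself, hence sends $\partial\mathbb B$ to $\partial\mathbb B$ and, by homogeneity, preserves the Euclidean norm; a norm-preserving linear map is orthogonal, so $f\in O(n)$.

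I expect the only genuinely delicate point to be the preservation of the join $\vee$, where one must pass through polars twice and invoke the intersection identity at the level of the polar sets $A^\circ,K^\circ$; the remaining steps are either elementary set theory (the intersection case, which is exactly where injectivity is spent) or direct appeals to the quoted results.
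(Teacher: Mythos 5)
Your proposal is correct and follows essentially the same route as the paper: injectivity gives preservation of $\cap$, the polar identity $A\vee K=(A^\circ\cap K^\circ)^\circ$ combined with polar-equivariance gives preservation of $\vee$, Fact~\ref{fact:teoremas-Slomka2011-SchneiderBoroczky2008} then produces $T\in GL(n)$, Lemma~\ref{lem:f=T} upgrades $\widetilde f=T$ on segments to $f=T$, and $f(\mathbb B)=\mathbb B$ forces orthogonality. The only cosmetic difference is that your join computation applies the intersection identity before passing back through the polar, whereas the paper inserts the bipolar step $\widetilde f(A\vee K)=\widetilde f\big((A\vee K)^{\circ\circ}\big)$ first; the two chains are equivalent.
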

\begin{proof}
By Remark \ref{rem:obs1}-(1), the induced map $\widetilde{f}:\mathcal{K}_0^n\rightarrow\mathcal{K}_0^n$ is a polar-equivariant map. Even more, $\widetilde{f}$ is not constant, since $\widetilde{f}(\{0\})=\{0\}$ and 
$\widetilde{f}(\mathbb{B})=\mathbb{B}$ (Remark \ref{rem:obs1}-(2)-(3)). 
In addition, by the injectivity of $f,$ $\widetilde{f}(A\cap K)=\widetilde{f}(A)\cap\widetilde{f}(K)$ for every $A,K\in\mathcal{K}_0^n.$  Hence, by properties (P3), (P4) and (P8),
\begin{align*}
    \widetilde{f}(A\vee K)&=\widetilde{f}\big((A\vee K)^{\circ\circ}\big)=\widetilde{f}\big((A^\circ\cap K^\circ)^\circ\big)\\
    &=\widetilde{f}(A^\circ\cap K^\circ)^\circ=\big(\widetilde{f}(A^\circ)\cap\widetilde{f}(K^\circ)\big)^\circ\\
&=\widetilde{f}(A^\circ)^\circ\vee\widetilde{f}(K^\circ)^\circ=\widetilde{f}(A^{\circ\circ})\vee \widetilde{f}(K^{\circ\circ})\\
&=\widetilde{f}(A)\vee\widetilde{f}(K).
\end{align*}

In consequence, the map $\widetilde{f}$ is a non-constant endomorphism of the lattice $(\mathcal{K}_0^n,\cap,\vee)$ and, by Fact \ref{fact:teoremas-Slomka2011-SchneiderBoroczky2008}, there exists a linear isomorphism $T:\mathbb{R}^n\rightarrow\mathbb{R}^n$ such that $\widetilde{f}(A)=T(A)$ for all $A\in\mathcal{K}_0^n$. In particular, $f([0,x])=\widetilde{f}([0,x])=T([0,x])=[0,T(x)]$ for every $x\in \mathbb R^n$, and therefore $f=T$ (Lemma~\ref{lem:f=T}). Finally,  by Remark \ref{rem:obs1}-(3), $T(\mathbb{B})=\widetilde{f}(\mathbb{B})=\mathbb{B},$ and hence $T=f$ is an orthogonal map.
\end{proof}

A direct application of Proposition \ref{prop:injective-polarpreservingmap} and Lemma \ref{lem:scalarproductREP} leads to the following characterization of the bijective maps satisfying condition (\ref{eq:scalarineq}).
\begin{remark}
Let  $f:\mathbb{R}^n\rightarrow\mathbb{R}^n,$ $n\geq 2,$ be a bijective map such that (\ref{eq:scalarineq}) holds. Then $f$ is an orthogonal map.
\end{remark}
We have been interested in polar-equivariant maps $\widetilde{f}$ on $\mathcal{K}_0^n$ induced by maps $f$ on $\mathbb{R}^n.$ However, one can consider a wider class of maps by allowing to take closures. The next proposition examines the properties of the maps $f$ on $\mathbb{R}^n$ such that $\overline{f(A^\circ)}=\overline{f(A)}^\circ,$ for all $A\in\mathcal{K}_0^n.$
\begin{proposition}
\label{prop:continuous-injective-polar-preserving}
Let  $f:\mathbb{R}^n\rightarrow\mathbb{R}^n,$ $n\geq2,$ be a continuous injective map such that 
$\overline{f(A^\circ)}=\overline{f(A)}^\circ,$ for all $A\in\mathcal{K}_0^n.$ Then $f$ is an orthogonal map.
\end{proposition}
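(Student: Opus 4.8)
The plan is to sidestep the closures appearing in the hypothesis by restricting $f$ to the compact bodies $\mathcal{K}_{(0),b}^n$, where every image $f(A)$ is automatically compact, hence closed, and then to run the lattice argument of Proposition~\ref{prop:injective-polarpreservingmap}. First I would record that $f(0)=0$: applying the hypothesis to $A=\mathbb{R}^n$ gives $\{f(0)\}=\overline{f(\{0\})}=\overline{f(\mathbb{R}^n)}^\circ$, and since every polar set contains the origin, the singleton $\{f(0)\}$ must be $\{0\}$. Next, for $A\in\mathcal{K}_{(0),b}^n$ both $A$ and $A^\circ$ lie in $\mathcal{K}_{(0),b}^n$ by (P1), so $f(A)$ and $f(A^\circ)$ are compact and the hypothesis reads $f(A^\circ)=f(A)^\circ$, with the closures now vacuous. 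Applying the hypothesis to $A^\circ$ and using the Bipolar Theorem (P8) shows $f(A)=\overline{f(A^\circ)}^\circ$ is a polar set, hence convex; thus $f(A)$ is a compact convex set containing $0$ whose polar $f(A)^\circ=f(A^\circ)$ is again compact. The ``moreover'' clause of (P1) then forces $f(A)\in\mathcal{K}_{(0),b}^n$, so $f$ restricts to a polar-preserving self-map of $\mathcal{K}_{(0),b}^n$ (and no appeal to invariance of domain is needed to see that $0\in\Int f(A)$).

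With this in hand, $f|_{\mathcal{K}_{(0),b}^n}$ is a lattice endomorphism of $\big(\mathcal{K}_{(0),b}^n,\cap,\vee\big)$: injectivity gives $f(A\cap K)=f(A)\cap f(K)$, and the identity $A\vee K=(A^\circ\cap K^\circ)^\circ$ combined with polar-preservation and (P3), (P4), (P8) yields $f(A\vee K)=f(A)\vee f(K)$, exactly as in the proof of Proposition~\ref{prop:injective-polarpreservingmap}. It is non-constant, since $f$ is injective and $\mathbb{B}\subsetneq 2\mathbb{B}$ force $f(\mathbb{B})\neq f(2\mathbb{B})$. Hence the $\mathcal{K}_{(0),b}^n$-version of Fact~\ref{fact:teoremas-Slomka2011-SchneiderBoroczky2008} provides $T\in GL(n)$ with $f(A)=T(A)$ for every $A\in\mathcal{K}_{(0),b}^n$. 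To upgrade this to $f=T$ on all of $\mathbb{R}^n$, I would fix $x\in\mathbb{R}^n$ and use the bodies $A_\delta:=\cco(\{x\}\cup\delta\mathbb{B})\in\mathcal{K}_{(0),b}^n$, which satisfy $\bigcap_{\delta>0}A_\delta=[0,x]$. Since an injective map commutes with intersections,
\[
f([0,x])=\bigcap_{\delta>0}f(A_\delta)=\bigcap_{\delta>0}T(A_\delta)=T\Big(\bigcap_{\delta>0}A_\delta\Big)=[0,T(x)],
\]
so Lemma~\ref{lem:f=T} gives $f=T$. Finally, $\mathbb{B}\in\mathcal{K}_{(0),b}^n$ together with $f(\mathbb{B})=f(\mathbb{B})^\circ$ forces $f(\mathbb{B})=\mathbb{B}$ by (P5); thus $T(\mathbb{B})=\mathbb{B}$, and a linear isomorphism preserving the Euclidean ball is orthogonal.

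The only genuine difficulty is the presence of the closures, which a priori block a direct appeal to Proposition~\ref{prop:injective-polarpreservingmap}; the key realization is that on $\mathcal{K}_{(0),b}^n$ the closures are automatic and simply disappear. The two places demanding a little care are showing that $f$ actually lands in $\mathcal{K}_{(0),b}^n$ rather than merely $\mathcal{K}_{0,b}^n$ — handled above through the ``moreover'' part of (P1) — and recovering the pointwise behaviour of $f$ from its action on bodies, which is where the intersection identity $\bigcap_{\delta>0}\cco(\{x\}\cup\delta\mathbb{B})=[0,x]$ and the commutation of injective maps with intersections carry the argument.
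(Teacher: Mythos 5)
Your proof is correct, and its skeleton coincides with the paper's own argument: both restrict attention to $\mathcal{K}_{(0),b}^n$, where continuity of $f$ makes images of compact sets compact so that the closures in the hypothesis become vacuous; both use the ``moreover'' clause of (P1) to see that $f$ maps $\mathcal{K}_{(0),b}^n$ into itself; both then run the lattice-endomorphism computation of Proposition~\ref{prop:injective-polarpreservingmap} and invoke Fact~\ref{fact:teoremas-Slomka2011-SchneiderBoroczky2008} to produce $T\in GL(n)$, with orthogonality coming from $f(\mathbb{B})=\mathbb{B}$ and (P5); and both conclude via Lemma~\ref{lem:f=T}. The one genuine divergence is the mechanism for establishing $f([0,x])=[0,T(x)]$. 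The paper approximates $[0,x]$ from outside by the bodies $X_m=[0,x]+\frac{1}{m}\mathbb{B}$ and uses the continuity of $f$ together with the Hausdorff convergence $f(X_m)=[0,T(x)]+\frac{1}{m}\mathbb{B}\to[0,T(x)]$, whereas you intersect the cones $A_\delta=\cco\big(\{x\}\cup\delta\mathbb{B}\big)$ and use the purely set-theoretic fact that an injective map commutes with arbitrary intersections (as does the bijection $T$). Your variant is slightly more elementary: it avoids any limit argument and confines the use of continuity to a single place (compactness of $f(K)$ for compact $K$), and your identity $\bigcap_{\delta>0}A_\delta=[0,x]$ is easily verified since $[0,x]\subseteq A_\delta\subseteq[0,x]+\delta\mathbb{B}$; the paper's version instead reuses the metric convergence machinery already in play. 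Both routes are rigorous, so this is a legitimate, mildly streamlined alternative ending to essentially the same proof.
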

\begin{proof}
Let us define $F(A):=\overline{f(A)},$ for $A\in\mathcal{K}_0^n.$ We will show  that the restriction $F|_{\mathcal{K}_{(0),b}^n}:\mathcal{K}_{(0),b}^n\rightarrow\mathcal{K}_{(0),b}^n$  is a well-defined non-constant map satisfying the hypotheses of Fact \ref{fact:teoremas-Slomka2011-SchneiderBoroczky2008}.
This would guarantee the existence of a linear isomorphism $T:\mathbb{R}^n\rightarrow\mathbb{R}^n$ such that $F(K)=T(K)$ for all $K\in\mathcal{K}_{(0),b}^n$.  Our work then will consist in proving that $f=T$ and that $T$ is an orthogonal map.

Notice that 
$F(A)=\overline{f(A^{\circ\circ})}=\overline{f(A^{\circ})}^\circ=F(A^\circ)^\circ,$ for all $A\in\mathcal{K}_0^n.$  Hence $F(A)\in\mathcal{K}_0^n$ and $F(A^\circ)=F(A)^\circ$ for all $A\in\mathcal{K}_0^n.$ It follows directly that $f(0)=0,$ $\overline{f(\mathbb{R}^n)}=\mathbb{R}^n$ and $\overline{f(\mathbb{B})}=\mathbb{B}.$ Moreover, by the continuity of 
$f$, $f(K)$ is compact for all $K\in\mathcal{K}_{(0),b}^n$. Thus, $F(K)=\overline{f(K)}=f(K)$ for each $K\in\mathcal{K}_{(0),b}^n$.  Furthermore,  if $K\in \mathcal K^n_{(0),b}$,  $K^\circ$ also belongs to $\mathcal{K}_{(0),b}^n$  (property (P1)) and then $f(K^\circ)=f(K)^\circ$ is compact.
From this, we infer that $f(K)\in\mathcal{K}_{(0),b}^n$ for all $K\in\mathcal{K}_{(0),b}^n.$ 
The previous observations show that $F|_{\mathcal{K}_{(0),b}^n}:\mathcal{K}_{(0),b}^n\rightarrow\mathcal{K}_{(0),b}^n$  is well-defined, $F(\mathbb{B})=\mathbb{B},$ and $F(K)=f(K)$ for every $K\in\mathcal K^n_{(0), b}$.   

We can now proceed as in the proof of Proposition \ref{prop:injective-polarpreservingmap} to show that $F(A\cap K)=F(A)\cap F(K)$ and $F(A\vee K)=F(A)\vee F(K)$ for all $A,K\in\mathcal{K}_{(0),b}^n.$ Hence, by Fact \ref{fact:teoremas-Slomka2011-SchneiderBoroczky2008}, $F(K)=T(K)$ for some linear isomorphism $T$ on $\mathbb{R}^n$ and for all 
$K\in\mathcal{K}_{(0),b}^n$. Clearly, $T$ must be an orthogonal map, since $F(\mathbb{B})=\mathbb{B}.$
To prove that $f=T$, let  $x\in\mathbb{R}^n$ and define, for each integer $m\geq1$,  the set 
$$X_m:=[0,x]+\frac{1}{m}\mathbb{B}.$$
Then $f(X_m)=F(X_m)=[0,T(x)]+\frac{1}{m}\mathbb{B}.$ Hence,  $(f(X_m))_m$ converges (w.r.t. $d_H$) to $[0,T(x)].$ Since $(X_m)_m$ converges (w.r.t. $d_H$) to $[0,x],$ and  $f$          is a continuous map, we have that $f([0,x])=[0,T(x)]$ for all  $x\in\mathbb{R}^n$.
We can now use Lemma~\ref{lem:f=T} to conclude that $f=T$, as required. 
\end{proof}

\section{Decreasing involutions on $\mathcal{K}^n_0$}
\label{sec:other standard involutions}

In Theorem~A
(\cite[Corollary 4]{Slomka2011}), the maps $f:\mathcal{K}_0^n\rightarrow\mathcal{K}_0^n$ satisfying conditions (D1) and (D2) are completely described. 
It is shown that such maps $f$ are of the form  $f(A)=T(A^\circ)$ for some symmetric linear isomorphism $T:\mathbb{R}^n\rightarrow\mathbb{R}^n$ and all $A\in\mathcal{K}_0^n$. 
Bearing in mind Theorem~\ref{Theo:main 1}, it is then natural to ask when does a map $f:\mathcal{K}_0^n\rightarrow\mathcal{K}_0^n$ satisfying (D1) and (D2) is conjugate with the polar mapping (and therefore  with the standard involution on $Q$). Theorem \ref{thm:caracterizacion-decreasing}  answers this question, and the purpose of this section is to prove it.

The following remark will be used in the proof of Theorem \ref{thm:caracterizacion-decreasing}. It establishes the continuity of the maps  on $\mathcal{K}_0^n$ induced by linear isomorphisms on $\mathbb{R}^n$.

\begin{remark}\label{rem:continuity-decreasing-inv}
Let $T:\mathbb{R}^n\rightarrow\mathbb{R}^n$ be a linear isomorphism. Then, the maps $\widetilde{T},f:\mathcal{K}_0^n\rightarrow\mathcal{K}_0^n$, given  by $\widetilde{T}(A):=T(A)$ and  $f(A):=T(A^\circ)$, are continuous.
\end{remark}
\begin{proof}
Using that $T$ is a continuos bijection, we can easily check that for every open set $U\subset \mathbb R^n$ and every compact set $C\subset \mathbb R^n$, the following equalities hold:
$$\widetilde T^{-1}(U^{-})=(T^{-1}(U))^{-}\;\text{ and }\;\widetilde T^{-1}((\mathbb{R}^n\setminus C)^{+})= (\mathbb R^{n}\setminus T^{-1}(C))^+.$$
Thus, $\widetilde T^{-1}(U^{-})$ and $\widetilde T^{-1}((\mathbb{R}^n\setminus C)^{+})$ are open sets with respect to the Fell topology. The continuity of $\widetilde T$ then follows immediately from Fact \ref{fact:topologies are the same}. 
On the other hand, observe that $f$ is such that $f=\widetilde{T}\alpha$. Therefore, its continuity follows from the continuity of $\widetilde{T}$ and the continuity of the polar mapping (Fact \ref{fact: continuidad polar daw }).
\end{proof}

We denote by $M_{n}(\mathbb{R})$ the vector space of square-matrices $R=(r_{ij}),$ $i,j=1,\ldots,n,$ with real entries.  As usual, $GL(n)$ and $O(n)$ denote the set of linear isomorphisms on $\mathbb{R}^n$ and the set of orthogonal maps on $\mathbb{R}^n$, respectively. In what follows, we will not distinguish between a linear map on $\mathbb{R}^n$ and its canonical matrix representation.
For every $T\in M_{n}(\mathbb{R})$, we denote by  $T^\top$ the transpose of $T$. Notice that in the case we are working on, $T^\top$ coincides with the adjoint operator of $T$. Hence,  for every linear isomorphism $T:\mathbb{R}^n\rightarrow\mathbb{R}^n$ and every $A\in\mathcal{K}_0^n,$ the following formula holds:
$$T(A^\circ)=[(T^\top)^{-1}A]^\circ$$
(see, e.g. \cite[Chaper IV \S 2]{SchaeferWolff1999}).

Finally, we are in condition to prove Theorem~\ref{thm:caracterizacion-decreasing}.

\begin{proof}
[Proof of Theorem~\ref{thm:caracterizacion-decreasing}]
First, observe that from Remark \ref{rem:continuity-decreasing-inv}, the map $f:\mathcal{K}_0^n\rightarrow\mathcal{K}_0^n$ is continuous. In particular, it is an involution on $\mathcal{K}_0^n$. 

Let us fix an orthogonal diagonalization of $T$ by some $U\in O(n),$ and a diagonal matrix $D\in M_{n}(\mathbb{R})$. Then $T=U^\top DU$ and $f(A)=U^\top DU(A^\circ)$ for all $A\in\mathcal{K}_0^n.$  

(1) We shall show that if  $T$  is  positive-definite, then  $f$ is conjugate with the polar mapping and, in particular, it is a based-free involution. Indeed, in this case, all  diagonal entries $d_{ii}$, $i=1,\ldots,n,$ of $D$ are strictly positive. Hence, the diagonal matrix $R$ with entries $r_{ii}=\sqrt {d_{ii}}$, $i=1,\ldots,n,$ is well-defined and $D=RR$. Furthermore, 
for every $A\in\mathcal{K}_0^n,$
\begin{align*}
f(A)=U^\top &RRU(A^\circ)=U^\top R\left[\left(((RU)^\top)^{-1}A\right)^\circ\right]\\
&=U^\top R\left[((U^\top R)^{-1}A)^\circ\right].
\end{align*}
By letting $\Psi=U^\top R,$ we have that $f(A)=\Psi((\Psi^{-1} A)^\circ)$ for all $A$. The latter shows that 
$f=\widetilde \Psi\alpha\widetilde{\Psi^{-1}}$, where $\widetilde \Psi:\mathcal K^n_0\to \mathcal K^n_0$ and $\widetilde{\Psi^{-1}}:\mathcal K^n_0\to \mathcal K^n_0$ are the maps induced by $\Psi$ and $\Psi^{-1}$, respectively. Clearly,
the maps  $\widetilde \Psi$ and  $\widetilde{\Psi^{-1}}$ are continuous (Remark \ref{rem:continuity-decreasing-inv}), and $\widetilde \Psi^{-1}=\widetilde{\Psi^{-1}}$.  In consequence,  $\widetilde \Psi$ is a homeomorphism and therefore $f$ is conjugate with the polar mapping $\alpha$, as desired.


 
(2) Let us suppose that $T$ is not positive-definite. We shall show that $f$ has infinitely many fixed points lying in $\mathcal{K}_{(0),b}^n$.  
Let $S\in M_{n}(\mathbb{R})$ be the diagonal matrix such that $s_{ii}=\sqrt{|d_{ii}|}$ for $i=1,\ldots,n.$ Notice that $W=S^{-1}DS^{-1}$ is a diagonal matrix such that  $\varepsilon_{i}:=w_{ii}\in\{-1,1\}$ for $i=1,\dots,n$. Moreover, 
$\varepsilon_{i}=w_{ii}=-1$ iff $d_{ii}<0$.
Clearly, $W$ is an orthogonal matrix and $W^{-1}=W$.

To exhibit an infinite family of fixed points, let us fix an index $j\in \{1,\dots, n\}$ such that  $d_{jj}<0$,  and define the set
\begin{equation}
\label{eq:cono-Aj}
A_{j}:=\left\{(a_1,\ldots,a_n)\in\mathbb{R}^n: a_j\geq\sqrt{\sum_{i\neq j}a_i^2}\right\}.
\end{equation}
The following is a well-known fact about the cone $A_j$. However, we include its proof for the sake of completeness:

\noindent\textbf{Claim.} $W(A_j)=A_j^\circ.$  

\textit{Proof of the Claim.} Let $a=(a_1,\ldots,a_n)$ and $x=(x_1,\ldots,x_n)$ be points in $A_j.$ Then, by the Cauchy-Schwarz inequality,
\begin{equation*}
\langle W(a),x \rangle=\left(\sum_{i\neq j}\varepsilon_{i}a_ix_i\right)-a_jx_j
\leq
\left(\sqrt{\sum_{i\neq j}(\varepsilon_{i}a_i)^2}\right)\left(\sqrt{\sum_{i\neq j}x_i^2}\right)-a_jx_j
\end{equation*}
Hence, $\langle W(a),x \rangle\leq 0$ and therefore $W(a)\in A_j^\circ.$ This proves that $W(A_j)\subseteq A_j^\circ.$ 

On the other hand, if $y=(y_1,\ldots,y_n)\in A_j^\circ,$ then for every integer $m\geq1$ and every $a=(a_1,\ldots,a_n)\in A_j$ such that $a_j=\sqrt{\sum_{i\neq j}a_i^2}=m,$ we have that
\begin{align*}
\langle y,a\rangle&=\left(\sum_{i\neq j}y_ia_i\right)+y_jm\leq 1\;\;\;\text{ and therefore}\\
&\frac{1}{m}\left(\sum_{i\neq j}y_ia_i\right)\leq \frac{1}{m}-y_j.
\end{align*}  
Since $m$ is an arbitrary positive integer, and the supremun of $\frac{1}{m}\sum_{i\neq j}y_ia_i$ over the $(n-1)$-tuples
$(a_1,\ldots,a_{j-1},a_{j+1},\ldots,a_n)$ such that $\sqrt{\sum_{i\neq j}a_i^2}=m$ is 
$\sqrt{\sum_{i\neq j}y_i^2},$  
we conclude that 
$\sqrt{\sum_{i\neq j}y_i^2}\leq-y_j.$ As a consequence, the point
$$y_0:=W(y)=(\varepsilon_{1}y_{1},\ldots,\varepsilon_{j-1}y_{j-1},-y_j,\varepsilon_{j+1}y_{j+1},\ldots,\varepsilon_{n}y_{n})$$ belongs to $A_j$.  It then follows that $y=W(y_0)\in W(A_j).$ We infer that $A_j^\circ\subseteq W(A_j)$  and therefore $A_j^\circ=W(A_j)$, as required.
\QEDA

Since $W^{-1}=W,$ then $A_j=W(A_j^\circ).$ This, in combination with $(P1)$, implies that $A_j\in \mathcal K^n_0$.

Now, let $E_t,K_t\in\mathcal{K}_{(0),b}^{n},$ with $t\in(0,1),$ be defined as 
\begin{align*}
E_t:=(A_j+t\mathbb{B})\cap\left(\frac{1-t}{t}\right)\mathbb{B},\text{ and }
K_t:=\left((A_j^\circ+t\mathbb{B})\cap\left(\frac{1-t}{t}\right)\mathbb{B}\right)^\circ.
\end{align*}
Using the claim, it is not difficult to see that $K_t^\circ=W(E_t)$,  and thus $K_t=W\big(E_t^\circ\big)$. Moreover, the geometric mean $P_t:=g(E_t,K_t)$ belongs to  $\mathcal{K}_{(0),b}^{n}$ and, by properties ($\Gamma1$), ($\Gamma2$) and ($\Gamma6$), we have that
\begin{align*}
P_t^\circ=g(E_t,K_t)^\circ=g\big(E_t^\circ,W(E_t)\big)=g\big(W(K_t),W(E_t)\big)=W(P_t).
\end{align*}

We have just constructed a family of solutions for the equation $X^\circ=W(X)$ on $\mathcal{K}_{(0),b}^{n}$.
Even more, since $E_t=\psi_t(A_j)$, $K_t=(\psi_t(A_j^\circ))^\circ$ and $P_t=\varphi_t(A_j)$ 
(where $\psi$ and $\varphi$ are the maps defined in (\ref{eq:funcion psi}) and (\ref{eq:mainmap}), resp.), then  $P_t\to A_j$ as $t$ tends to $0$.
 Since $A_j$ is an unbounded set, the family $\big\{P_t:t\in(0,1)\big\}\subset\mathcal{K}_{(0),b}^{n}$ must be infinite. 

To finish the proof, let $Y_t$ denote the set $U^\top S(P_t)$, and notice that  $Y_t^\circ=\big(U^\top S(P_t)\big)^\circ=[(U^\top S)^\top ]^{-1}(P_t^\circ)=U^\top S^{-1}(P_t^\circ).$ Therefore,
\begin{align*}
f(Y_t)=U^\top DU(Y_t^\circ)&=U^\top DUU^\top S^{-1}(P_t^\circ)=U^\top DS^{-1}(P_t^\circ)\\
&=U^\top SW(P_t^\circ).
\end{align*}
Since $P_t=W(P_t^\circ),$ then  $f(Y_t)=U^\top S(P_t)=Y_t$ is a fixed point of $f$. Hence, the family $\big\{Y_t:t\in(0,1)\big\}\subset\mathcal{K}_{(0),b}^{n}$ consists only of fixed points of $f$. In particular,
$f$ has infinite fixed points on $\mathcal{K}_{(0),b}^{n}$. 
\end{proof}

\section{Final remarks and questions}
\label{sec:remarks}

In \cite{Rotem2016-Algebraic},  algebraically inspired results for the class $\mathcal{K}_{(0),b}^{n}$ were obtained.  There, by letting the polar set $A^\circ$ play the role of the
multiplicative inverse of $A$, different real-equations can be translated to the context of compact convex sets $\mathcal{K}_{(0),b}^{n}.$ 
In this sense, the equation $X^\circ=-X$ arises from the equation $x^{-1}=-x$ which has no solution over the real numbers. In contrast, as a consequence of Theorem \ref{thm:caracterizacion-decreasing}-(2),  the equation $X^\circ=-X$ has infinite solutions on $\mathcal{K}_{(0),b}^{n}$. This fact follows directly from the remark below. 

\begin{remark}
\label{rem:infinite-fixed points}
Suppose that $T\in GL(n)$ is symmetric and is not positive-definite, then the equation $X^\circ=T(X)$ has infinite solutions on $\mathcal{K}_{(0),b}^{n}.$
\end{remark}
The Hilbert cube $Q=\prod_{i=1}^{\infty}[-1,1]$ has a natural lattice structure. In fact, for all $x=(x_i)_{i\in\mathbb{N}}, y=(y_i)_{i\in\mathbb{N}}\in Q,$ the relation $\preceq,$ defined by $x\preceq y$ iff $x_i\leq y_i$ for all $i\in\mathbb{N}$ is a partial order for which the operations $\vee$ and  $\wedge,$ defined as $x\vee y:=(\max\{x_i,y_i\})_{i}$ and $x\wedge y:=(\min\{x_i,y_i\})_{i},$ for $x,y\in Q,$ provide a lattice structure. In this context, the standard based-free involution $\sigma$ satisfies, additionally, a condition similar to (D2): for $x,y\in Q$, $\sigma(y)\preceq\sigma(x)$ whenever $x\preceq y.$ Namely, it is decreasing with respect to $\preceq$. Taking into account that in Theorem~\ref{thm:caracterizacion-decreasing} we showed that every based-free decreasing involution on $\mathcal{K}_0^n$ 
is conjugate with the polar mapping, we propose the following weaker versions of Anderson's problem:

\textbf{Question 1}. Is every based-free decreasing involution $\beta:Q\rightarrow Q$ conjugate with $\sigma$?

\textbf{Question 2}. Let $\beta:Q\rightarrow Q$ be a based-free involution  with the property that there exist a lattice structure $(Q,\vee',\wedge')$ with respect to a partial order
$\preceq'$ on $Q$, such that $\beta(y)\preceq'\beta(x)$ whenever
$x\preceq' y$. Is $\beta$ conjugate to $\sigma$?

Notice that the converse of Question 2 is trivially true.

Recall that a homeomorphism $\Phi:Q\to\mathcal K^n_0$ is equivariant (with respect to $\sigma$ and $\alpha$) if $\Phi(-x)=\Phi(x)^\circ$ for every $x\in Q$. Since Theorem~\ref{Theo:main 1} guarantees  the existence of at least one equivariant homemorphism, it is then natural to ask if there exist an equivariant homeomorphism $\Phi:Q\to\mathcal K^n_0$ such that $x\preceq y$ if and only if $\Phi(x)\subset \Phi(y)$. 
 However, the answer to this question is negative, as we explain in the following remark.

\begin{remark}\label{rem: order}
There is no homeomorphism $\Phi:Q\to \mathcal  K^n_0$ such that $x\preceq y$ if and only if $\Phi(x)\subset \Phi(y)$. Indeed, if such a homeomorphism exists, then it is not difficult to show that it would satisfy the following two equalities
$$\Phi(x\vee y)=\Phi(x)\vee \phi(y)\quad\text{ and }\quad\Phi(x\wedge y)= \Phi(x)\cap\Phi (y).$$
In particular, the following commutative diagram would hold:

\[
\xymatrix{ Q\times Q  \ar[r]^\vee &Q\ar[d]^\Phi\\ \mathcal{K}^n_0\times\mathcal K^n_0 \ar[u]^{\Phi^{-1}\times\Phi^{-1}}\ar[r]^\vee &\mathcal K_0^n}
\]

Since the operation $\vee$  is continuous on $Q$,  we would have that the operation $\vee$  on $\mathcal K_0^n$ is a composition of continuous maps, and therefore it must be continuous too. However, it is not difficult to see that $\vee$ is not a continuous operation on $\mathcal K_0^n$ and therefore we have a contradiction. 
\end{remark}


Finally, we want to point out another algebraic similitude between $Q$ and $\mathcal K^n_0$. For every $x=(x_i)_{i\in\mathbb{N}}$, define $\frac{1}{2}x:=(\frac{1}{2}x_i)_{i\in\mathbb N}$. Clearly  $\frac{1}{2}x\in Q$ for every $x\in Q$.
If we consider the map $\gamma:Q\times Q\to Q$ given by $\gamma(x,y):=\frac{1}{2}x+\frac{1}{2}y$, then $\gamma$ is well-defined and satisfies the following properties.

\begin{itemize}
\item[($\widetilde\Gamma1$)] $\gamma(x,x)=x$ and $\gamma(x,y)=\gamma(y,x).$   
\item[($\widetilde\Gamma2$)] $\gamma(-x,-y)=-\gamma(x,y).$
\item[($\widetilde\Gamma3$)] $\gamma(x,-x)=0$ (where $0$ is the element of $Q$ with all its coordinates equal to $0$). 
\item[($\widetilde\Gamma4$)]  If $x_1\preceq x_2$ and $y_1\preceq y_2$, then $\gamma(x_1,y_1)\preceq \gamma(x_2,y_2)$ .
\item[($\widetilde\Gamma5$)] The map $\gamma$ is continuous.
\end{itemize}

The reader can notice that if we replace the role of the involution $\sigma$ by $\alpha$, and the order $\preceq$ by the inclusion $\subseteq$, then the above properties are analogous to properties $(\Gamma1)$-$(\Gamma5)$ of the geometric mean $g$. However there is a significant difference: the map $\gamma$ is defined on the the whole space $Q$, while $g$ is only defined on $\mathcal K^n_{(0),b}$.

\begin{remark}\label{rem: geometric mean extended}
For any equivariant homeomorphism $\Phi:Q\to\mathcal K^n_0$, we can define the map $g':\mathcal K^n_0\times\mathcal K^n_0\to\mathcal K^n_0$ given by
$$g'(A,K)=\Phi\Big(\gamma \big(\Phi^{-1}(A), \Phi^{-1}(K)\big)\Big).$$

\begin{enumerate}
    \item The map $g'$ satisfies properties $\Gamma 1$, $\Gamma 2$, $\Gamma 3$.
    \item The map $g'$ also satisfies the following property
    $$(\Gamma' 5)\;\;\text {The map }g'\text { is continuous w.r.t. }d_{AW}.$$
    \end{enumerate}

\end{remark}

After Remark~\ref{rem: geometric mean extended}, we believe that the next final question could be of interest.

\textbf{Question 4.} Is it possible to construct explicitly a map $g':\mathcal K^n_0\times\mathcal K^n_0\to\mathcal K^n_0$ satisfying properties $(\Gamma 1), (\Gamma 2), (\Gamma 3)$ and $(\Gamma' 5)$? And $\Gamma 4$?
 
\subsection*{Acknowledgments}
We wish to  thank  the anonymous  referee for the constructive comments and recommendations
which improved the final version of this paper.

\bibliographystyle{ijmart}

\end{document}